\documentclass[amssymb,amsfonts,refcheck,11pt,verbatim,righttag]{amsart}
\pdfoutput=1 
\usepackage{amsfonts}

\usepackage{amsmath}
\usepackage{amssymb}
\usepackage{stmaryrd}
\usepackage{mathtools}
\usepackage{nccmath}
\usepackage{graphicx}
\usepackage{amsthm}
\usepackage{sidecap} 
\usepackage{xfrac} 

\usepackage[latin1]{inputenc} 
\usepackage[T1]{fontenc}      
\usepackage{lmodern}
\usepackage{bm}
\usepackage[french,english]{babel}

\usepackage{geometry}
\geometry{hmargin=3cm,vmargin=3cm} 

\usepackage{enumitem}
\usepackage{float}

\usepackage{mathtools}
\usepackage{dsfont} 
\usepackage{hyperref}
\usepackage{url}
\hypersetup{colorlinks,citecolor=blue,filecolor=blue,linkcolor=blue,urlcolor=blue}
\usepackage{verbatim}
\usepackage{setspace}

\newtheorem{theorem}{Theorem}[section]
\newtheorem{proposition}[theorem]{Proposition}
\newtheorem{corollary}[theorem]{Corollary}
\newtheorem{lemma}[theorem]{Lemma}

\theoremstyle{definition}

\theoremstyle{remark}

\newtheorem{example}[theorem]{Example}

\onehalfspacing

\allowdisplaybreaks

\setlist[itemize]{label=\textbullet}

\title{DIMENSIONS OF ``SELF-AFFINE SPONGES'' INVARIANT UNDER THE ACTION OF MULTIPLICATIVE INTEGERS}
\author{GUILHEM BRUNET}
\address{Laboratoire d'Analyse, G\'eom\'etrie et Applications, CNRS, UMR 7539, Universit\'e Sorbonne Paris Nord, CNRS, UMR 7539,  F-93430, Villetaneuse, France}
\email{brunet@math.univ-paris13.fr}
\keywords{Hausdorff dimension, Minkowski dimension, Symbolic dynamics, Self-affine carpets, Self-affine sponges}
\thanks{2010 {\it Mathematics Subject Classification}: 28A80, 37C45}
\begin{document}
\raggedbottom
\pagestyle{plain}

\begin{abstract}
Let $m_1 \geq m_2 \geq 2$ be integers. We consider subsets of the product symbolic sequence space $(\{0,\dots,m_1-1\} \times \{0,\dots,m_2-1\})^{\mathbb{N}^*}$ that are invariant under the action of the semigroup of multiplicative integers. These sets are defined following Kenyon, Peres and Solomyak and using a fixed integer $q \geq 2$. We compute the Hausdorff and Minkowski dimensions of the projection of these sets onto an affine grid of the unit square. The proof of our Hausdorff dimension formula proceeds via a variational principle over some class of Borel probability measures on the studied sets. This extends well-known results on self-affine Sierpi\'nski carpets. However, the combinatoric arguments we use in our proofs are more elaborate than in the self-similar case and involve a new parameter, namely $j = \left\lfloor \log_q \left( \frac{\log(m_1)}{\log(m_2)} \right) \right\rfloor$. We then generalize our results to the same subsets defined in dimension $d \geq 2$. There, the situation is even more delicate and our formulas involve a collection of $2d-3$ parameters.\newline
\end{abstract}

\maketitle
For the reader's convenience we summarize a list of commonly used symbols below :
\par\noindent\rule{155mm}{.1pt}\newline
\begin{tabular}{lll}
   $\mathcal{A}_i$ & Alphabet $\{0,\dots,m_i-1\}$ \\
   $\Sigma_{m_1,m_2}$ & Symbolic space $(\mathcal{A}_1 \times \mathcal{A}_2)^{\mathbb{N}^*}$\\
	 $q$ & Integer $\geq 2$\\
	 $\Omega$ & Closed subset of $\Sigma_{m_1,m_2}$\\
	 $X_\Omega$ & Closed subset of $\Sigma_{m_1,m_2}$ invariant under the action of multiplicative integers\\
   $\sigma$ & Standard shift map on $\Sigma_{m_1,m_2}$ \\
   $\gamma$ & $\gamma := \frac{\log(m_2)}{\log(m_1)}$\\
	 $(x,y)|_{J_i}$ & $(x,y)|_{J_i} := ((x_{q^{\ell} i},y_{q^{\ell} i}))_{\ell =0}^\infty$\\
	 $L$ & Map $n \in \mathbb{N}^* \mapsto \left\lceil \frac{n}{\gamma} \right\rceil$\\
	 $\mu$ & Borel probability measure on $\Omega$\\
	 $\mathbb{P}_\mu$ & Borel probability measure on $X_\Omega$, see Section \ref{21}\\
	 $\pi$ & Projection map of $\Sigma_{m_1,m_2}$ on the second coordinate\\
	 $\Omega_y$ & $\Omega_y := \Omega \cap \pi^{-1}(\{y\})$\\
   $\left[u\right]$ & Generalized cylinder on $\Sigma_{m_1,m_2}$, see Section \ref{21}\\
	 $\textup{Pref}_{p,\ell}(\Omega)$ & $(p \times \ell)$-sized prefixes of $\Omega$, see Section \ref{21}\\
	 $\alpha^1_k$ & $\alpha^1_k := \{\Omega \cap \left[u\right] :  u \in \textup{Pref}_{0,k}(\Omega)\}$\\

\end{tabular}
\par\noindent\rule{155mm}{.1pt}
\par\noindent\rule{155mm}{.1pt}\newline
\begin{tabular}{lll}
   $\alpha^2_k$ & $\alpha^2_k := \{\Omega \cap \left[u\right] :  u \in \textup{Pref}_{k,0}(\Omega)\}$\\
   $H^\mu_{m_2}$ & $\mu$-entropy of a finite partition with the base-$m_2$ logarithm\\
	 $j$ & The unique non-negative integer such that $q^j \leq \gamma^{-1} < q^{j+1}$\\
	 $\Omega_u$ & For $u = (x_1,y_1) \cdots (x_k,y_k) y_{k+1} \cdots y_{k+j} \in \text{Pref}_{k,j}(\Omega)$, $\Omega_u$ is the follower set of\\ 
	 $\ \ $ & $(x_1,y_1) \cdots (x_k,y_k)$ in $\Omega$ with $y_{k+1},\ldots,y_{k+j}$ being fixed \\
	 $\mu_u$ & The normalized measure induced by $\mu$ on $\Omega_u$\\
	 $\dim_e(\nu)$ & Entropy dimension of the measure $\nu$\\
	 $\nu^y$ & Disintegration of the measure $\nu$ with respect to $\pi$\\
	 $\Gamma_j(\Omega)$ & $j^{\text{th}}$ tree of prefixes of $\Omega$, see Section \ref{23}\\
	 $\Gamma_{u,j}(\Omega)$ & Tree of followers of $u$ in $\Gamma_j(\Omega)$, see Section \ref{23}\\
	 $t = t(u)$ & The unique vector defined on the set of vertices of $\Gamma_{u,j}(\Omega)$ satisfying equation \eqref{solution}\\ 
	 $t_\varnothing$ & See Section \ref{23}
\end{tabular}
\par\noindent\rule{155mm}{.1pt}\newline
\section{Introduction}

Let $m_1 \geq m_2 \geq 2$ and $q \geq 2$ be integers. Let $\Omega$ be a closed subset of $$\Sigma_{m_1,m_2} = (\mathcal{A}_1 \times \mathcal{A}_2)^{\mathbb{N}^*},$$ where $\mathcal{A}_1 = \{0,\dots,m_1-1\}$ and $\mathcal{A}_2 = \{0,\dots,m_2-1\}$. We can associate to $\Omega$ a closed subset of the torus $\mathbb{T}^2$ by considering $\psi(\Omega)$, where $\psi$ is the coding map defined as
$$
\psi : (x_k,y_k)_{k=1}^\infty \in \Sigma_{m_1,m_2} \longmapsto \left( \sum_{k=1}^\infty \frac{x_k}{m_1^k}, \sum_{k=1}^\infty \frac{y_k}{m_2^k} \right) \in \mathbb{T}^2.
$$
Let $\sigma$ be the standard shift map on $\Sigma_{m_1,m_2}$ and $\pi$ be the projection on the second coordinate. Closed subsets of $\Sigma_{m_1,m_2}$ that are $\sigma$-invariant are sent through $\psi$ to closed subsets of $\mathbb{T}^2$ that are invariant under the diagonal endomorphism of $\mathbb{T}^2$ ; $$(x,y) \in \mathbb{T}^2 \longmapsto (m_1 x, m_2 x).$$ Classical examples of such subsets are Sierpi\'nski carpets. Given $$\emptyset \neq A \subset \{0,\dots,m_1-1\} \times \{0,\dots,m_2-1\},$$ consider 
$$
\Omega = \{(x,y) = (x_k,y_k)_{k=1}^\infty \in \Sigma_{m_1,m_2} : \forall  k \geq 1, \ (x_k,y_k) \in A\}.
$$
Then $\psi(\Omega)$ is a Sierpi\'nski carpet. In this case, $\psi(\Omega)$ is the attractor of the iterated function system made of the contractions $f_{(i,j)} : (x,y) \in \mathbb{T}^2 \mapsto \left( \frac{x+i}{m_1}, \frac{y+j}{m_2} \right)$ with $(i,j) \in A$. When $m_1 = m_2 = m$, we obtain a self-similar fractal and it is well-known that $$\dim_H(\psi(\Omega)) = \dim_M(\psi(\Omega)) = \frac{\log(\# A)}{\log(m)},$$ where $\dim_H$ and $\dim_M$ stand for the Hausdorff and Minkowski (also called box-counting) dimensions respectively. See for example Chapter $2$ of \cite{ref3}. More generally, as proved in \cite{ref6}, if $\Omega$ is a closed shift-invariant subset of $\Sigma_{m,m}$ then we have
$$
\dim_H(\psi(\Omega)) = \dim_M(\psi(\Omega)) = \frac{h \left(\sigma|_\Omega\right)}{\log(m)},
$$  
where $h$ stands for the topological entropy. McMullen \cite{ref1} and Bedford \cite{ref4} independently computed the Hausdorff and Minkowski dimensions of general Sierpi\'nski carpets when $m_1 > m_2$, which we will assume from now on. Furthermore, the Hausdorff and Minkowski dimensions of Sierpi\'nski sponges - defined as the generalization of Sierpi\'nski carpets in all dimensions - were later computed in \cite{ref10}.

Let $$\gamma = \frac{\log(m_2)}{\log(m_1)}$$ and $$L : n \in \mathbb{N}^* \longmapsto \left\lceil \frac{n}{\gamma} \right\rceil.$$ We will need the following metric on $\Sigma_{m_1,m_2}$ : for $(x,y)$ and $(u,v)$ in $\Sigma_{m_1,m_2}$ let
\begin{align*}
\begin{split}
&d((x_k,y_k)_{k=1}^\infty, (u_k,v_k)_{k=1}^\infty) \\&= \max \left(m_1^{-\min \{k \geq 0 : (x_{k+1},y_{k+1}) \neq (u_{k+1},v_{k+1})\}}, \ m_1^{-\gamma \min \{k \geq 0 : y_{k+1} \neq v_{k+1} \}}\right).
\end{split}
\end{align*}
This metric allows us to consider ``quasi-squares'' as defined by McMullen when computing the dimensions of Sierpi\'nski carpets. It is easy to see that for $(x,y) \in \Sigma_{m_1,m_2}$ the balls centered at $(x,y)$ are
$$
B_n(x,y) = B_{m_1^{-n}}(x,y) = \{(u,v) \in \Sigma_{m_1,m_2} : u_k = x_k \ \forall 1 \leq k \leq n \ \text{and} \ v_k = y_k \ \forall 1 \leq k \leq L(n)\}.
$$
Using this metric on $\Sigma_{m_1,m_2}$ the Hausdorff and Minkowski dimensions of $\Omega$ are then equal to those of $\psi(\Omega)$. Thus from now on we will only work on the symbolic space. In this paper, our goal is to compute the Hausdorff and Minkowski dimensions of more general carpets that are not shift invariant. More precisely, given an arbitrary closed subset $\Omega$ of $\Sigma_{m_1,m_2}$ we consider
$$
X_\Omega = \{ (x_k,y_k)_{k=1}^\infty \in \Sigma_{m_1,m_2} : (x_{i q^\ell},y_{i q^\ell})_{\ell=0}^\infty \in \Omega \ \textup{for all} \ i, \ q \nmid i\}.
$$
Such sets were studied in \cite{ref2}, where the authors restricted their work to the one dimensional case : they computed the Hausdorff and Minkowski dimensions of sets defined by
$$
\left\{ (x_k)_{k=1}^\infty \in \left\{0,\dots,m-1\right\}^{\mathbb{N}^*} : (x_{i q^\ell})_{\ell=0}^\infty \in \Omega \ \textup{for all} \ i, \ q \nmid i\right\},
$$
where $\Omega$ is an arbitrary closed subset of $\left\{0,\dots,m-1\right\}^{\mathbb{N}^*}$. It is easily seen that this case covers the situation where $m_1 = m_2$ in our setting. Their interest in these sets was prompted by the computation of the Minkowski dimension of the "multiplicative golden mean shift"
$$
\left\{x = \sum_{k=1}^\infty \frac{x_k}{2^k} : x_k \in \{0,1\} \ \text{and} \ x_k x_{2k} = 0 \ \text{for all} \ k \geq 1\right\}
$$
done in \cite{ref7}. We aim to give formulas for $\dim_H(X_\Omega)$ and $\dim_M(X_\Omega)$ in the two-dimensional case, and then in all dimensions. Note that if $\Omega$ is shift-invariant, then $X_\Omega$ is invariant under the action of any integer $r \in \mathbb{N}^*$ $$(x_k,y_k)_{k=1}^\infty \longmapsto (x_{rk},y_{rk})_{k=1}^\infty.$$ For example, as in the case of dimension one we can consider subshifts of finite type on $\Sigma_{m_1,m_2}$. To do so, let $D = \{(0,0),(0,1), \dots ,(0,m_2-1),(1,0),(1,1),\dots,(1,m_2-1),\dots,(m_1-1,0),(m_1-1,1),\dots,(m_1-1,m_2-1)\}$ and let $A$ be an $m_1 m_2$ - sized square matrix indexed by $D \times D$ with entries in $\{0,1\}$. Then define 
$$
\Sigma_A = \{ (x_k,y_k)_{k=1}^\infty \in \Sigma_{m_1,m_2} : A((x_k,y_k),(x_{k+1},y_{k+1})) = 1, \ k \geq 1 \},
$$
and
$$
X_A = X_{\Sigma_A} = \{ (x_k,y_k)_{k=1}^\infty \in \Sigma_{m_1,m_2} : A((x_k,y_k),(x_{qk},y_{qk})) = 1, \ k \geq 1 \}.
$$\newline

\begin{figure}[H]
\centering
\includegraphics[scale=0.39,trim={14cm 2cm 14cm 3cm},clip]{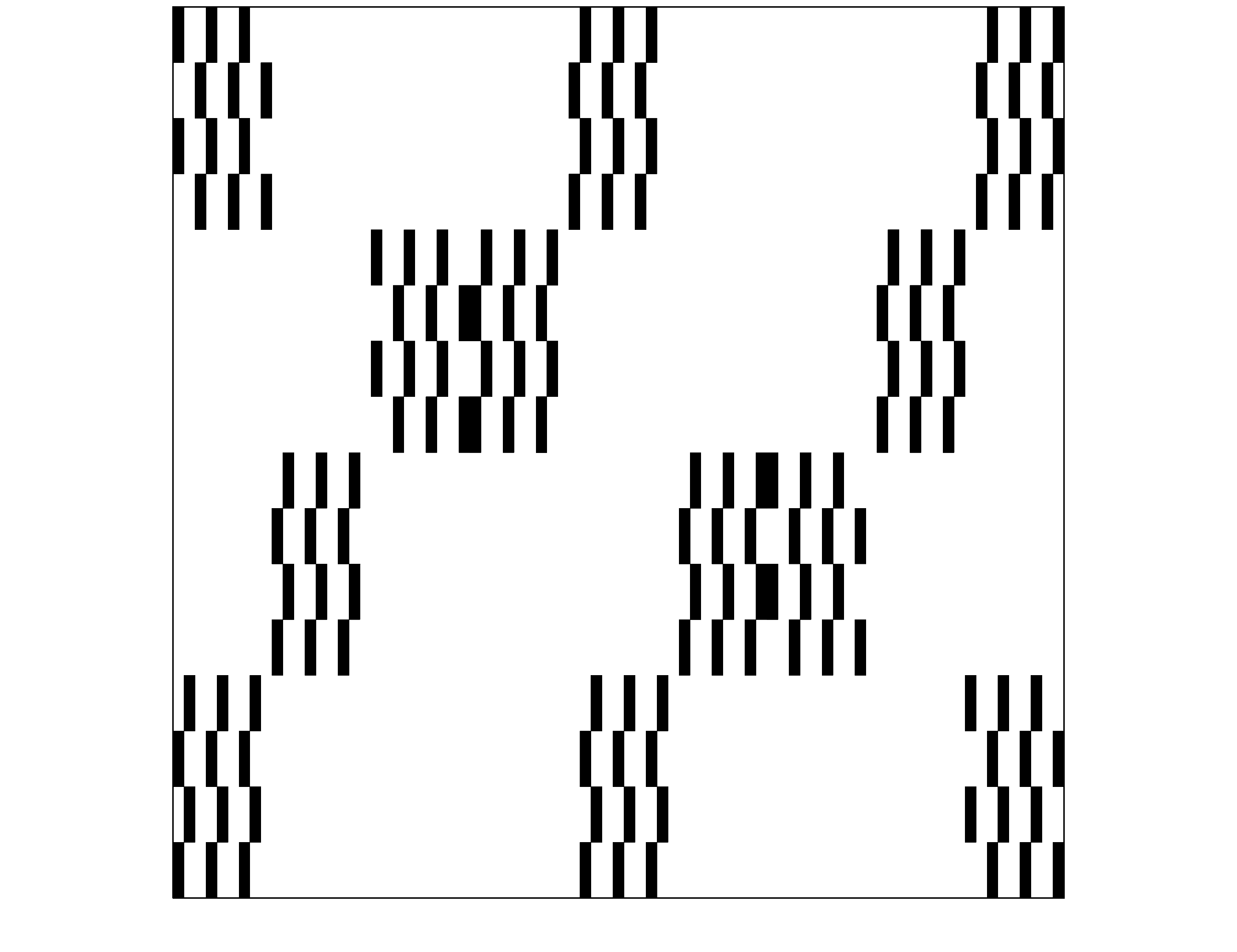}
\caption{Approximation of order $4$ of the set $X_A$ for $m_1 = 3$, $m_2 = 2$, $q=2$ and $A$ a circulant matrix whose first row is $(1,0,0,1,0,0)$.}
\end{figure}

\begin{figure}[H]
\centering
\includegraphics[scale=0.10]{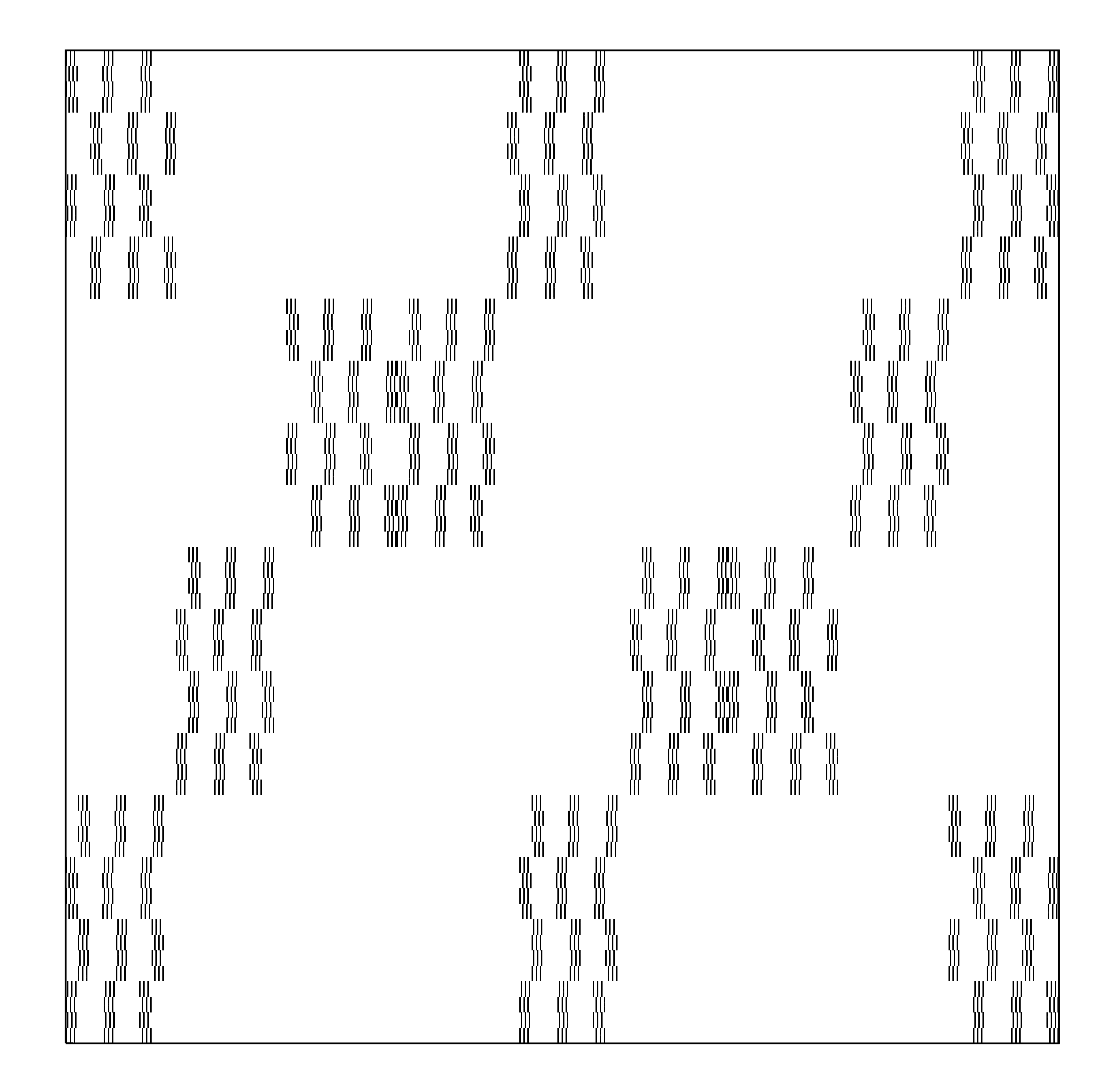}
\caption{Approximation of order $6$ of the set $X_A$ for $m_1 = 3$, $m_2 = 2$, $q=2$ and $A$ a circulant matrix whose first row is $(1,0,0,1,0,0)$.}
\end{figure}

Note that further generalizations of the sets considered in \cite{ref2} were studied in \cite{ref8}, in the one-dimensional case as well.\newline 

The paper is organized as follows. In Section \ref{2}, we focus on the two-dimensional situation. We first introduce in Subsection \ref{21} a particular class of measures on $X_\Omega$. We show that these measures are exact dimensional and we compute their Hausdorff dimensions. This class of measures is the same as that considered in \cite{ref2}, but in our case the parameter $j = \left\lfloor \log_q \left( \frac{\log(m_1)}{\log(m_2)} \right) \right\rfloor$ comes into play when studying their local dimension. Indeed, this parameter plays a crucial role in the definition of generalized cylinders whose masses are used to study the mass of balls under the metric $d$. In Subsection \ref{22}, out of curiosity, we study under which condition the Ledrappier--Young formula (where the entropies of invariant measures are replaced by their entropy dimensions) can hold for these measures, which are not shift-invariant in general.\newline

In Subsections \ref{23}, \ref{24} and \ref{25}, we compute the Hausdorff and Minkowski dimensions of $X_\Omega$, using a variational principle over the class of measures we studied earlier. We show that there exists a unique Borel probability measure which allows us to bound $\dim_H(X_\Omega)$ both from below and from above.\newline 

Then, in Section \ref{3}, we extend our results to the general multidimensional case. The combinatorics involved there become significantly more complex, as the study of the local dimension of the measures of interest invokes some generalized cylinders which depend in a subtle way on a collection of $2d-3$ parameters.

\section{The two-dimensional case}\label{2}

\subsection{The measures $\mathbb{P}_\mu$ and their dimensions}\label{21}

Throughout the paper we will use the notation $\llbracket m,n \rrbracket = \left\{m, \dots, n\right\}$ if $m \leq n$ are integers.\newline

To compute $\dim_H(X_\Omega)$, we will use the classical strategy of stating a variational principle over a certain class of Borel probability measures $\mathbb{P}_\mu$ on $X_\Omega$ defined below, i.e we will show that
$$
\dim_H(X_\Omega) = \max_{\mathbb{P}_\mu} \dim_H(\mathbb{P}_\mu).
$$
To do so, we will use the following classical facts (for a proof, see \cite[Proposition 2.3]{ref3}) :\newline

\begin{theorem} \label{calme}
Let $\mu$ be a finite Borel measure on $\Sigma_{m_1,m_2}$ and let $A \subset \Sigma_{m_1,m_2}$ such that $\mu(A) > 0$.
\begin{itemize}
\item If $\liminf_{n \rightarrow \infty} -\frac{\log_{m_1}(\mu(B_n(x)))}{n} \geq D$ for $\mu$-almost all $x$, then $\dim_H(\mu) \geq D$.
\item If $\liminf_{n \rightarrow \infty} -\frac{\log_{m_1}(\mu(B_n(x)))}{n} \leq D$ for $\mu$-almost all $x$, then $\dim_H(\mu) \leq D$.
\item If $\liminf_{n \rightarrow \infty} -\frac{\log_{m_1}(\mu(B_n(x)))}{n} \leq D$ for all $x \in A$, then $\dim_H(A) \leq D$.\newline
\end{itemize}
\end{theorem}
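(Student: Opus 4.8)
The plan is to read Theorem~\ref{calme} as the two halves of Billingsley's lemma — the mass distribution principle and its converse — adapted to the metric $d$, the essential point being that $d$ is an ultrametric, which makes all the covering steps elementary. So the first step is to record that $d$ is an ultrametric: each of the two terms under the maximum defining $d$ has the form $m_1^{-c(p,p')}$, where $c(p,p')$ is the length of the longest common prefix of $p$ and $p'$ (of the pair-sequences for the first term, of the second-coordinate sequences for the second), and a prefix length satisfies $c(p,p'')\ge\min\bigl(c(p,p'),c(p',p'')\bigr)$; hence each term is an ultrametric, and a maximum of two ultrametrics is an ultrametric. From this I extract the three facts I will use: (i) any ball that meets a set $E$ is centred at every one of its points, so it is one of the quasi-squares $B_n(x)$ with $x\in E$, and $\operatorname{diam}B_n(x)=m_1^{-n}$; (ii) any set $U$ with a point $x_0\in U$ is contained in a ball $B_n(x_0)$ with $\operatorname{diam}B_n(x_0)\le\operatorname{diam}U$; (iii) since $\Sigma_{m_1,m_2}$ is compact, any family of balls of uniformly bounded radius contains a countable pairwise disjoint subfamily with the same union (in an ultrametric space two balls are nested or disjoint, so one keeps the maximal ones). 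I also note the harmless remark that $B_r(x)$ is always one of the $B_n(x)$, so that testing the local scaling of $\mu$ along the radii $m_1^{-n}$, as in the hypotheses, loses nothing compared with letting $r\downarrow 0$.

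For the first item (lower bound) I would run the mass distribution principle with an Egorov-type truncation. Fix $D'<D$. The sets $A_N=\{x:\ \mu(B_n(x))\le m_1^{-D'n}\ \text{for all}\ n\ge N\}$ are measurable — $x\mapsto\mu(B_n(x))$ takes finitely many values since $B_n(x)$ depends on finitely many coordinates — and increase to a set of full $\mu$-measure by hypothesis. Let $E$ be any Borel set with $\mu(E)=\mu(\Sigma_{m_1,m_2})$; choose $N$ with $\mu(E\cap A_N)>0$ and put $F=E\cap A_N$. Given any cover of $F$ by sets of diameter $<m_1^{-N}$, discard those missing $F$ and enlarge each remaining one, via (i)–(ii), to a ball $B_{n_i}(x_i)$ with $x_i\in F$, $n_i\ge N$ and $m_1^{-n_i}=\operatorname{diam}B_{n_i}(x_i)\le\operatorname{diam}U_i$; then $\mu(F)\le\sum_i\mu(B_{n_i}(x_i))\le\sum_i m_1^{-D'n_i}\le\sum_i(\operatorname{diam}U_i)^{D'}$. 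Hence $\mathcal{H}^{D'}(F)\ge\mu(F)>0$, so $\dim_H(F)\ge D'$ and therefore $\dim_H(E)\ge D'$; as $E$ was an arbitrary full-measure set this gives $\dim_H(\mu)\ge D'$, and letting $D'\uparrow D$ finishes.

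For the second and third items I would prove the third and note that it contains the second. Fix $D'>D$. For every $x$ in the given set $A$ and every $N$ there is, by hypothesis, an $n\ge N$ with $\mu(B_n(x))>m_1^{-D'n}$; these balls cover $A$, and (iii) extracts a countable pairwise disjoint subfamily $\{B_{n_i}(x_i)\}$ still covering $A$, with $m_1^{-n_i}\le m_1^{-N}$ and $\mu(B_{n_i}(x_i))>(m_1^{-n_i})^{D'}$. Disjointness and finiteness of $\mu$ give $\sum_i(\operatorname{diam}B_{n_i}(x_i))^{D'}<\sum_i\mu(B_{n_i}(x_i))\le\mu(\Sigma_{m_1,m_2})<\infty$, uniformly in $N$; letting $N\to\infty$ the mesh of the cover tends to $0$, so $\mathcal{H}^{D'}(A)\le\mu(\Sigma_{m_1,m_2})<\infty$ and $\dim_H(A)\le D'$, hence $\dim_H(A)\le D$ after $D'\downarrow D$. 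The second item follows by applying this to $A$ the full-measure set on which the $\liminf$ is $\le D$, since $\dim_H(\mu)\le\dim_H(A)$.

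There is no genuine obstacle here — the statement is classical — so what I would be most careful about is bookkeeping rather than ideas: getting the quantifiers in the $D'\to D$ limits in the right order, the measurability/Egorov step that upgrades an almost-everywhere scaling bound to a single positive-measure set on which it holds uniformly, and verifying once and for all that $d$ is an ultrametric so that ``every ball is a quasi-square centred at any of its points'' and ``every family of balls has a disjoint subfamily with the same union'' are legitimately available; with those in place, each of the three items reduces to one short covering computation.
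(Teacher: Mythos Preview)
Your argument is correct: this is the standard proof of Billingsley's lemma, streamlined by the observation that $d$ is an ultrametric, so that the usual Vitali/Besicovitch covering machinery collapses to the trivial fact that intersecting balls are nested. The paper itself does not prove Theorem~\ref{calme} at all --- it simply cites \cite[Proposition 2.3]{ref3} --- so you have in fact supplied strictly more than the paper does, and your approach is exactly the one that reference would give (adapted to the present metric).

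One small point of bookkeeping worth tightening in your write-up of item (ii): you claim that any $U$ sits inside a $B_n(x_0)$ with $\operatorname{diam}B_n(x_0)\le\operatorname{diam}U$. What the ultrametric inequality gives directly is $U\subseteq\overline{B}(x_0,\operatorname{diam}U)$, and since the metric $d$ takes values only in the set $\{m_1^{-a}\}\cup\{m_1^{-\gamma b}\}$, this closed ball coincides with some $B_n(x_0)$ whose radius $m_1^{-n}$ may a priori exceed $\operatorname{diam}U$ by a factor less than $m_1$. This bounded multiplicative discrepancy is of course harmless for the Hausdorff dimension computation (it is absorbed when you let $D'\uparrow D$), but as written the inequality $m_1^{-n_i}\le\operatorname{diam}U_i$ is not quite literally true; either note the constant or, more simply, observe that $\operatorname{diam}U$ itself lies in the value set of $d$ and so equals the radius of the enclosing ball exactly.
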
 

For $p,\ell \in \mathbb{N}$ and $u \in (\{0,\dots,m_1-1\} \times \{0,\dots,m_2-1\})^p \times \{0,\dots,m_2-1\}^\ell$, define the generalized cylinder
$$
\left[u\right] = \{(x,y) \in \Sigma_{m_1,m_2} : ((x,y)|_p, \pi(\sigma^p((x,y))|_\ell)) = u\},
$$
where $(x,y)|_p = (x_1,y_1) \cdots (x_p,y_p)$ and $\pi((x,y)|_p) = y|_p$, and set  $$\textup{Pref}_{p,\ell}(\Omega) = \{u \in (\{0,\dots,m_1-1\} \times \{0,\dots,m_2-1\})^p \times \{0,\dots,m_2-1\}^\ell : \Omega \cap \left[u\right] \neq \varnothing\}.$$ For $(x,y) \in \Sigma_{m_1,m_2}$, $n \geq 1$ and $i$ an integer such that $q \nmid i$, we define $$(x,y)|_{J_i^n} = (x_i,y_i) (x_{qi},y_{qi}) \cdots (x_{q^r i},y_{q^r i})$$ if $q^r i \leq n < q^{r+1} i$. Let $\mu$ be a Borel probability measure on $\Omega$. Following \cite{ref2} we define $\mathbb{P}_\mu$ on the semi-algebra of cylinder sets of $\Sigma_{m_1,m_2}$ by
$$
\mathbb{P}_\mu(\left[(x,y)|_n\right]) = \prod_{\substack{i \leq n \\ q \nmid i}} \mu \left(\left[(x,y)|_{J_i^n}\right] \right).
$$
This is a well defined pre-measure. Indeed it is easy to see that $\mathbb{P}_\mu(\left[(k,l)\right]) = \mu(\left[(k,l)\right])$ for $(k,l) \in \mathcal{A}_1 \times \mathcal{A}_2$, and for $n+1 = q^r i$ with $q \nmid i$,
$$
\frac{\mathbb{P}_\mu( \left[(x_1,y_1) \cdots (x_n,y_n) (x_{n+1},y_{n+1}) \right])}{\mathbb{P}_\mu( \left[(x_1,y_1) \cdots (x_n,y_n) \right] )} = \frac{\mu(\left[(x_i,y_1) (x_{qi},y_{qi}) \cdots (x_{q^r i},y_{q^r i}) \right])}{\mu \left(\left[(x_i,y_1) (x_{qi},y_{qi}) \cdots (x_{q^{r-1} i},y_{q^{r-1} i}) \right] \right)},
$$
whence 
$$
\mathbb{P}_\mu( \left[(x_1,y_1) \cdots (x_n,y_n) \right]) = \sum_{(i,j) \in \mathcal{A}_1 \times \mathcal{A}_2} \mathbb{P}_\mu( \left[(x_1,y_1) \cdots (x_n,y_n) (i,j) \right]).
$$
Denote also by $\mathbb{P}_\mu$ the extension of $\mathbb{P}_\mu$ to a Borel probability measure on $\left(\Sigma_{m_1,m_2},\mathcal{B} \left(\Sigma_{m_1,m_2} \right) \right)$. By construction, $\mathbb{P}_\mu$ is supported on $X_\Omega$, since $\Omega$ is a closed subset of $\Sigma_{m_1,m_2}$ and hence
$$
\Omega =  \bigcap_{k=1}^\infty \bigcup_{u \in \textup{Pref}_{k,0}(\Omega)} \left[u\right].
$$\newline

Let us now introduce some more notations. For all $k \geq 1$ we consider the finite partitions of $\Omega$ defined by
$$
\alpha^1_k =  \{\Omega \cap \left[u\right] : u \in \textup{Pref}_{0,k}(\Omega)\}
$$
and
$$
\alpha^2_k = \{\Omega \cap \left[u\right] : u \in \textup{Pref}_{k,0}(\Omega)\}.
$$
For a Borel probability measure $\mu$ on $\Omega$ and a finite measurable partition $\mathcal{P}$ on $\Omega$, denote by $H^\mu_{m_2}(\mathcal{P})$ the $\mu$-entropy of the partition, with the base-$m_2$ logarithm :
$$
H^\mu_{m_2}(\mathcal{P}) = - \sum_{C \in \mathcal{P}} \mu(C) \log_{m_2} \mu(C). 
$$\newline
Let $j$ be the unique non-negative integer such that $$q^j \leq \frac{1}{\gamma} = \frac{\log(m_1)}{\log(m_2)} < q^{j+1}.$$ Note that for all $n \geq 1$ large enough we have $$q^j n \leq L(n) < q^{j+1} n.$$ 
\begin{theorem} \label{test2} Let $\mu$ be a Borel probability measure on $\Omega$. Then $\mathbb{P}_\mu$ is exact dimensional and we have
\begin{align*}
\begin{split}
\dim_H(\mathbb{P}_\mu) = (q-1)^2 \sum_{p=1}^j \frac{H^\mu_{m_2}(\alpha^1_p)}{q^{p+1}} &+ (q-1)(q^{j+1} \gamma - 1) \sum_{p=j+1}^\infty \frac{H^\mu_{m_2}(\alpha^2_{p-j} \lor \alpha^1_p)}{q^{p+1}} \\ &+ (q-1)(1-q^j \gamma) \sum_{p=j+1}^\infty \frac{H^\mu_{m_2}(\alpha^2_{p-j-1} \lor \alpha^1_p)}{q^p}.
\end{split}
\end{align*}
\end{theorem}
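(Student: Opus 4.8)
The plan is to identify, for $\mathbb{P}_\mu$-almost every $(x,y)\in X_\Omega$, the limit $\lim_{n\to\infty}-\tfrac1n\log_{m_1}\mathbb{P}_\mu(B_n(x,y))$ with the right-hand side $\Delta$ of the statement. Exact dimensionality then follows (on each radius range $(m_1^{-n-1},m_1^{-n}]$ the ball $B_r(x,y)$ is constant, so the local dimension at an arbitrary radius is squeezed between the two consecutive integer-scale quotients), and $\dim_H(\mathbb{P}_\mu)=\Delta$ follows from Theorem \ref{calme} (both items, since $\liminf=\lim=\Delta$ a.e.). The backbone is the product structure of $\mathbb{P}_\mu$: the map $(x,y)\mapsto\big((x,y)|_{J_i}\big)_{q\nmid i}$, with $(x,y)|_{J_i}=\big((x_{q^\ell i},y_{q^\ell i})\big)_{\ell\ge0}$, is a bijection identifying $\mathbb{P}_\mu$ with the product measure $\bigotimes_{q\nmid i}\mu$, so that under $\mathbb{P}_\mu$ the variables $\xi_i:=(x,y)|_{J_i}$, $q\nmid i$, are i.i.d.\ with law $\mu$.

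I would first rewrite the mass of a ball as a product over the chains $J_i$. Since $B_n(x,y)$ prescribes $(x_k,y_k)$ for $k\le n$ and $y_k$ for $n<k\le L(n)$, it is the generalized cylinder $[w]$ with $w\in\textup{Pref}_{n,L(n)-n}(\Sigma_{m_1,m_2})$; setting $a_i(n)=\#(J_i\cap\llbracket1,n\rrbracket)$ and $b_i(n)=\#(J_i\cap\llbracket1,L(n)\rrbracket)$ and distributing over chains the finite sum defining $\mathbb{P}_\mu([w])$ gives
$$
\mathbb{P}_\mu(B_n(x,y))=\prod_{\substack{q\nmid i\\ i\le L(n)}}\mu(C_i),
$$
where $C_i$ is the atom of $\alpha^2_{a_i(n)}\lor\alpha^1_{b_i(n)}$ containing $\xi_i$, with the convention $\alpha^2_0=\{\Omega\}$ (this covers the chains with primitive index $>n$, for which $a_i(n)=0$). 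Writing $g_{a,b}(\xi)=-\log_{m_2}\mu(\text{atom of }\alpha^2_a\lor\alpha^1_b\text{ at }\xi)$, so that $\mathbb{E}_\mu[g_{a,b}]=H^\mu_{m_2}(\alpha^2_a\lor\alpha^1_b)$, and using $\log_{m_1}=\gamma\log_{m_2}$, this reads $-\tfrac1n\log_{m_1}\mathbb{P}_\mu(B_n(x,y))=\tfrac\gamma n\sum_{q\nmid i\le L(n)}g_{a_i(n),b_i(n)}(\xi_i)$, a normalized sum of independent terms.

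The new content is the combinatorial analysis of the pairs $(a_i(n),b_i(n))$ and their frequencies. Using $q^jn\le L(n)<q^{j+1}n$ (valid for $n$ large), I would show that every chain with $a_i(n)\ge1$ has overhang $b_i(n)-a_i(n)\in\{j,j+1\}$, while every chain with $a_i(n)=0$ has $b_i(n)\in\llbracket1,j+1\rrbracket$. A chain with $a_i(n)\ge1$ is parametrized by its top $t=\max(J_i\cap\llbracket1,n\rrbracket)\in\llbracket\lfloor n/q\rfloor+1,n\rrbracket$, for which $a_i(n)=v_q(t)+1$ ($v_q$ the $q$-adic valuation) and the overhang is $j+1$ or $j$ according as $t\le L(n)q^{-j-1}$ or not; a chain with $a_i(n)=0$ is parametrized by its primitive index $i_0\in\llbracket n+1,L(n)\rrbracket$, with $b_i(n)=p\iff L(n)q^{-p}<i_0\le L(n)q^{1-p}$. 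Counting integers of prescribed $q$-adic valuation in an interval (asymptotic density $(q-1)q^{-p}$ for valuation $p-1$) then yields, for each admissible type, a frequency $N_{a,b}(n)/n\to c_{a,b}$; a direct computation gives for instance $c_{p,p+j+1}=(q-1)(1-\gamma q^j)\gamma^{-1}q^{-p-j-1}$ and $c_{p,p+j}=(q-1)(\gamma q^{j+1}-1)\gamma^{-1}q^{-p-j-1}$ for $p\ge1$, and $c_{0,p}=(q-1)^2\gamma^{-1}q^{-p-1}$ for $1\le p\le j$. The point is that $\gamma c_{a,b}$ equals exactly the coefficient of $H^\mu_{m_2}(\alpha^2_a\lor\alpha^1_b)$ in $\Delta$ (using $\alpha^2_0\lor\alpha^1_{j+1}=\alpha^1_{j+1}$), and the degenerate case $\gamma q^j=1$ simply suppresses the overhang-$(j+1)$ types, consistently with the vanishing of the last sum in $\Delta$. (As a consistency check one finds $\sum_{a,b}c_{a,b}=(q-1)\gamma^{-1}q^{-1}$, the limiting density of all chains meeting $\llbracket1,L(n)\rrbracket$.)

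It remains to pass from convergence of means to an almost sure statement. First, $\tfrac1n\mathbb{E}_{\mathbb{P}_\mu}\big[\sum_{q\nmid i\le L(n)}g_{a_i(n),b_i(n)}(\xi_i)\big]=\sum_{a,b}\tfrac{N_{a,b}(n)}{n}H^\mu_{m_2}(\alpha^2_a\lor\alpha^1_b)\to\sum_{a,b}c_{a,b}H^\mu_{m_2}(\alpha^2_a\lor\alpha^1_b)=\Delta/\gamma$ by dominated convergence for series, using $\sup_n N_{a,b}(n)/n=O(q^{-a})$ together with $H^\mu_{m_2}(\alpha^2_a\lor\alpha^1_b)=O(a+b)$ --- bounds which also prove that the three series defining $\Delta$ converge. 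The almost sure convergence of $\tfrac1n\sum_{q\nmid i\le L(n)}g_{a_i(n),b_i(n)}(\xi_i)$ to the same limit then follows, the summands being independent and bounded on the atoms of the (finite) partitions involved, from a concentration and Borel--Cantelli argument exactly as in the one-dimensional treatment of \cite{ref2}: each frequency $N_{a,b}(n)$ is of linear size in $n$, the geometric decay of the $c_{a,b}$ makes the contribution of types with $a$ above a slowly growing threshold negligible, and the finitely many remaining types are controlled directly. I expect this last step --- interchanging the $n\to\infty$ limit with the infinite sum over types, i.e.\ taming the tail of a non-stationary sum of unbounded independent variables --- to be the main technical obstacle; the genuinely novel ingredient, however, is the combinatorial identification of the cylinders $C_i$ and of the frequencies $c_{a,b}$, which is precisely where the parameter $j$ enters.
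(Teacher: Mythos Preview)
Your proposal is correct and follows essentially the same route as the paper: decompose $\mathbb{P}_\mu(B_n(x,y))$ as a product over the chains $J_i$, classify the chains by the pair $(a_i(n),b_i(n))$ (the paper does this equivalently by partitioning the index range $\left]L(n)/q^\ell,L(n)\right]$ into the explicit subintervals $\left]L(n)/q^p,n/q^{p-j-1}\right]$ and $\left]n/q^{p-j-1},L(n)/q^{p-1}\right]$), apply a strong law of large numbers to each type, and control the tail. The only organizational difference is that the paper fixes a truncation level $\ell$, proves matching $\liminf$/$\limsup$ bounds with an $O((\ell+1)q^{-\ell})$ error coming from a Borel--Cantelli estimate on the residual $D_n(x,y)$, and lets $\ell\to\infty$ at the end, whereas you absorb this into a single ``moving threshold'' argument; both are standard ways to handle the same obstacle.
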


\begin{proof}
Our method is inspired by the calculation of $\dim_H(\mathbb{P}_\mu)$ in \cite{ref2}. The strategy of the proof is the same, nevertheless the computations will be more involved, due to the fact that the $\mathbb{P}_\mu$-mass of a ball for the metric $d$ is a product of $\mu$-masses of generalized cylinders rather than standard ones as in \cite{ref2}.\newline \newline

 Let $\ell \geq j+1$. We will first show that for $\mathbb{P}_\mu$-almost all $(x,y) \in X_\Omega$ we have
\begin{align*}
\begin{split}
\liminf_{n \rightarrow \infty} \frac{-\log_{m_1}(\mathbb{P}_\mu(B_{n}(x,y)))}{n} &\geq (q-1)^2 \sum_{p=1}^j \frac{H^\mu_{m_2}(\alpha^1_p)}{q^{p+1}} \\ &+ (q-1)(q^{j+1} \gamma - 1) \sum_{p=j+1}^\ell \frac{H^\mu_{m_2}(\alpha^2_{p-j} \lor \alpha^1_p)}{q^{p+1}} \\ &+ (q-1)(1-q^j \gamma) \sum_{p=j+1}^\ell \frac{H^\mu_{m_2}(\alpha^2_{p-j-1} \lor \alpha^1_p)}{q^p},
\end{split}
\end{align*}

and 

\begin{align*}
\begin{split}
\limsup_{n \rightarrow \infty} \frac{-\log_{m_1}(\mathbb{P}_\mu(B_{n}(x,y)))}{n} &\leq  (q-1)^2 \sum_{p=1}^j \frac{H^\mu_{m_2}(\alpha^1_p)}{q^{p+1}} \\ &+ (q-1)(q^{j+1} \gamma - 1) \sum_{p=j+1}^\ell \frac{H^\mu_{m_2}(\alpha^2_{p-j} \lor \alpha^1_p)}{q^{p+1}} \\ &+ (q-1)(1-q^j \gamma) \sum_{p=j+1}^\ell \frac{H^\mu_{m_2}(\alpha^2_{p-j-1} \lor \alpha^1_p)}{q^k} \\ &+ \frac{(\ell+1) \log_{m_2}(m_1 m_2)}{q^\ell}.
\end{split}
\end{align*}
Letting $\ell \rightarrow \infty$ will yield the desired equality (cf. Theorem \ref{calme}). To check these, we can restrict ourselves to $n = q^\ell r$, $r \in \mathbb{N}$. Indeed if $q^\ell r \leq n < q^\ell (r+1)$ then
$$
\frac{-\log_{m_1}(\mathbb{P}_\mu(B_n(x,y)))}{n} \geq \frac{-\log_{m_1}(\mathbb{P}_\mu(B_{q^\ell r}(x,y)))}{q^\ell (r+1)} \geq \frac{r}{r+1} \frac{-\log_{m_1}(\mathbb{P}_\mu(B_{q^\ell r}(x,y)))}{q^\ell r},
$$
which gives

$$
\liminf_{n \rightarrow \infty} \frac{-\log_{m_1}(\mathbb{P}_\mu(B_{n}(x,y)))}{n} = \liminf_{r \rightarrow \infty} \frac{-\log_{m_1}(\mathbb{P}_\mu(B_{q^\ell r}(x,y)))}{q^\ell r}.
$$
The $\limsup$ is dealt with similarly.\newline

As proved in \cite{ref2} we have 

$$
\lim_{n\rightarrow \infty} \frac{-\log_{m_1} \left(\mathbb{P}_\mu \left(\left[(x,y)|_n\right]\right)\right)}{n} = (q-1)^2 \sum_{p=1}^\infty \frac{H^\mu_{m_1}(\alpha^2_p)}{q^{p+1}} \ \text{for} \ \mathbb{P}_\mu-\text{almost all} \ (x,y) \in X_\Omega.
$$
Note that 

$$
\mathbb{P}_\mu(B_n(x,y)) = \sum_{x'_{n+1},\dots,x'_{L(n)}} \mathbb{P}_\mu \left( \left[(x_1,y_1)\cdots(x_n,y_n)(x'_{n+1},y_{n+1})\cdots (x'_{L(n)},y_{L(n)})\right] \right),
$$
the sum being taken over all $x'_{n+1},\ldots,x'_{L(n)}$ such that $$\left[(x_1,y_1)\cdots(x_n,y_n)(x'_{n+1},y_{n+1})\cdots (x'_{L(n)},y_{L(n)})\right] \cap X_\Omega \neq \emptyset.$$
Let $$i \in \left]\frac{L(n)}{q^\ell},L(n)\right] = \bigsqcup_{p=1}^\ell \left]\frac{L(n)}{q^{p}},\frac{L(n)}{q^{p-1}}\right]$$ such that $q \nmid i$. Note that if $i \in \left]\frac{L(n)}{q^{p}},\frac{L(n)}{q^{p-1}}\right]$ then the word $(x,y)|_{J_i^{L(n)}}$ is of length $p$. Recall that $j$ is defined by $q^j \leq \frac{1}{\gamma} < q^{j+1}$. Suppose $j \geq 1$. If $1 \leq p \leq j$ then $\frac{L(n)}{q^p} \geq n$, so
$$
\left]\frac{L(n)}{q^p},\frac{L(n)}{q^{p-1}} \right] \subset \ ]n,L(n)].
$$
If $j+1 \leq p \leq \ell$ and $\ell$ is large enough, then $\frac{n}{q^{p-j-1}} \in \left]\frac{L(n)}{q^p},\frac{L(n)}{q^{p-1}}\right]$, thus we can partition 
$$
\left]\frac{L(n)}{q^p},\frac{L(n)}{q^{p-1}}\right] = \left]\frac{L(n)}{q^p},\frac{n}{q^{p-j-1}}\right] \bigsqcup \left]\frac{n}{q^{p-j-1}}, \frac{L(n)}{q^{p-1}}\right].
$$
In the case where $i \in \left]\frac{n}{q^{p-j-1}}, \frac{L(n)}{q^{p-1}}\right]$ we have
$$
q^{p-j-2} i \leq n < q^{p-j-1} i \leq q^{p-1} i \leq L(n) < q^p i,
$$
and if $i \in \left]\frac{L(n)}{q^p},\frac{n}{q^{p-j-1}}\right]$ then
$$
q^{p-j-1} i \leq n < q^{p-j} i \leq q^{p-1} i \leq L(n) < q^p i.$$ If $j=0$ then 
$$
i \in \left]\frac{n}{q^{p-1}}, \frac{L(n)}{q^{p-1}}\right] \Longrightarrow q^{p-2} i \leq n < q^{p-1} i \leq L(n) < q^p i 
$$

$$
i \in \left]\frac{L(n)}{q^p},\frac{n}{q^{p-1}}\right] \Longrightarrow q^{p-1} i \leq n \leq L(n) < q^p i.
$$
Thus for any $j$ we have

\begin{align*}
\begin{split}
&\mathbb{P}_\mu(B_n(x,y)) = \bigg[ \prod_{p=1}^j \prod_{\substack{i \in \left] \frac{L(n)}{q^p},\frac{L(n)}{q^{p-1}} \right] \\ q \nmid i}} \mu \left( \left[y_i \cdots y_{q^{p-1} i} \right] \right) \bigg] \\ &\boldsymbol{\cdot} \bigg[ \prod_{p=j+1}^\ell \bigg( \prod_{\substack{i \in \left]\frac{n}{q^{p-j-1}},\frac{L(n)}{q^{p-1}} \right] \\ q \nmid i}} \mu \left( \left[(x_i,y_i) \cdots (x_{q^{p-j-2} i}, y_{q^{p-j-2} i}) y_{q^{p-j-1} i} \cdots y_{q^{p-1} i} \right] \right) \bigg) \\ &\boldsymbol{\cdot} \bigg(\prod_{\substack{i \in \left]\frac{L(n)}{q^{p}},\frac{n}{q^{p-j-1}} \right] \\ q \nmid i}} \mu \left( \left[(x_i,y_i) \cdots (x_{q^{p-j-1} i}, y_{q^{p-j-1} i}) y_{q^{p-j} i} \cdots y_{q^{p-1} i} \right] \right) \bigg) \bigg] \boldsymbol{\cdot} D_n(x,y),
\end{split}
\end{align*}
with $D_n(x,y)$ being the product of the remaining quotients (words beginning with $(x_i,y_i)$ with $i \leq \frac{L(n)}{q^\ell}$). Here we used the notion of generalized cylinders we defined earlier :
\begin{align*}
\begin{split}
&\mu \left( \left[y_i \cdots y_{q^{p-1} i} \right] \right) = \sum_{x'_i,\ldots,x'_{q^{p-1} i}} \mu \left( \left[(x'_i,y_i) \cdots (x'_{q^{p-1} i},y_{q^{p-1} i}) \right] \right),\\&\mu \left( \left[(x_i,y_i) \cdots (x_{q^{p-j-2} i}, y_{q^{p-j-2} i}) y_{q^{p-j-1} i} \cdots y_{q^{p-1} i} \right] \right) \\&= \sum_{x'_{q^{p-j-1} i},\ldots,x'_{q^{p-1} i}} \mu \left( \left[(x_i,y_i) \cdots (x_{q^{p-j-2} i}, y_{q^{p-j-2} i}) (x'_{q^{p-j-1} i}, y_{q^{p-j-1} i}) \cdots (x'_{q^{p-1} i},y_{q^{p-1} i}) \right] \right),\\&\mu \left( \left[(x_i,y_i) \cdots (x_{q^{p-j-1} i}, y_{q^{p-j-1} i}) y_{q^{p-j} i} \cdots y_{q^{p-1} i} \right] \right) \\&= \sum_{x'_{q^{p-j} i},\ldots,x'_{q^{p-1} i}} \mu \left( \left[(x_i,y_i) \cdots (x_{q^{p-j-1} i}, y_{q^{p-j-1} i}) (x'_{q^{p-j} i}, y_{q^{p-j} i}) \cdots (x'_{q^{p-1} i},y_{q^{p-1} i}) \right] \right),
\end{split}
\end{align*}
the sums being taken over the cylinders that intersect $\Omega$. If $(u_n),(v_n) \in \left(\mathbb{R}^*\right)^{\mathbb{N}^*}$, we say that $u_n \sim v_n$ if $\frac{u_n}{v_n} \rightarrow 1$ as $n \rightarrow \infty$. Here we have

\begin{equation*}
\begin{aligned}
\# \left\{i \in \left]\frac{L(n)}{q^{p}},\frac{L(n)}{q^{p-1}}\right] : q \nmid i \right\}           & \sim \frac{(q-1)^2 n}{\gamma q^{p+1}},       \\
\# \left\{i \in \left]\frac{n}{q^{p-j-1}},\frac{L(n)}{q^{p-1}}\right] : q \nmid i \right\}    & \sim \frac{n (q-1)(1-q^j \gamma)}{\gamma q^p},    \\
\# \left\{ i \in \left] \frac{L(n)}{q^p},\frac{n}{q^{p-j-1}} \right] : q \nmid i \right\}       & \sim \frac{n (q-1) (q^{j+1} \gamma - 1)}{\gamma q^{p+1}}.
\end{aligned}
    \end{equation*}
Note that for $i \in \left]\frac{n}{q^{p-j-1}},\frac{L(n)}{q^{p-1}} \right]$, $q \nmid i$ the random variables $$Y_{i,n,p} : (x,y) \in X_\Omega \longmapsto - \log_{m_1} \left(\mu \left( \left[(x_i,y_i) \cdots (x_{q^{p-j-2} i}, y_{q^{p-j-2} i}) y_{q^{p-j-1} i} \cdots y_{q^{p-1} i} \right] \right) \right)$$ are i.i.d and uniformly bounded, with expectation being $H^\mu_{m_1}(\alpha^2_{p-j-1} \lor \alpha^1_p)$. Fixing $j+1 \leq p \leq l$ and letting $n = q^\ell r, r \rightarrow \infty$, we can use Lemma \ref{proba} to get that for $\mathbb{P}_\mu$-almost all $(x,y) \in X_\Omega$

$$
\frac{\gamma q^p}{n (q-1)(1-q^j \gamma)} \sum_{\substack{i \in \left]\frac{n}{q^{p-j-1}},\frac{L(n)}{q^{p-1}} \right] \\ q \nmid i}} Y_{i,n,p}(x,y) \underset{r \rightarrow \infty}{\longrightarrow} H^\mu_{m_1}(\alpha^2_{p-j-1} \lor \alpha^1_p).
$$
Thus

\begin{align*}
\begin{split}
&\sum_{p=j+1}^ \ell\frac{(q-1)(1-q^j \gamma)}{\gamma q^p} \sum_{\substack{i \in \left]\frac{n}{q^{p-j-1}},\frac{L(n)}{q^{p-1}} \right] \\ q \nmid i}} \frac{\gamma q^p Y_{i,n,p}(x,y)}{n (q-1)(1-q^j \gamma)} \\&\underset{r \rightarrow \infty}{\longrightarrow} (q-1)(1-q^j \gamma) \sum_{p=j+1}^\ell \frac{H^\mu_{m_2}(\alpha^2_{p-j-1} \lor \alpha^1_p)}{q^p}.
\end{split}
\end{align*}
Similarly if we define
$$
Z_{i,n,p} : (x,y) \longmapsto - \log_{m_1}\left(\mu \left( \left[y_i \cdots y_{q^{p-1} i} \right] \right) \right),
$$
whose expectation is $H^\mu_{m_1}(\alpha^1_p)$, for $\mathbb{P}_\mu$-almost all $(x,y) \in X_\Omega$ we have

$$
\frac{\gamma q^{p+1}}{n (q-1)^2} \sum_{\substack{i \in \left] \frac{L(n)}{q^p},\frac{L(n)}{q^{p-1}} \right] \\ q \nmid i}} Z_{i,n,p}(x,y) \underset{r \rightarrow \infty}{\longrightarrow} H^\mu_{m_1}(\alpha^1_p),
$$
hence

$$
\sum_{p=1}^j \frac{(q-1)^2}{\gamma q^{p+1}} \sum_{\substack{i \in \left] \frac{L(n)}{q^p},\frac{L(n)}{q^{p-1}} \right] \\ q \nmid i}} \frac{\gamma q^{p+1} Z_{i,n,p}(x,y)}{n (q-1)^2} \underset{r \rightarrow \infty}{\longrightarrow} (q-1)^2 \sum_{p=1}^j \frac{H^\mu_{m_2}(\alpha^1_p)}{q^{p+1}}.
$$
The third term is treated in similar manner. We have thus proved the first inequality. Now it remains to prove the second inequality using $D_n(x,y)$. It is easily seen that there exists $C \geq 0$ such that for all $b > a > 0$ 
$$
\left|\# \{i \in \mathbb{N} \cap \left]a,b\right] : q \nmid i\} - \frac{q-1}{q} (b-a) \right| \leq C.
$$
Thus the number of letters in $\mathcal{A}_1 \times \mathcal{A}_2$ appearing in the words of the developed $D_n(x,y)$ is 

\begin{align} \label{nombre}
\begin{split}
d_n &:= L(n) - \sum_{p=1}^\ell \# \left\{i \in \mathbb{N} \cap \left]\frac{L(n)}{q^p},\frac{L(n)}{q^{p-1}}\right] : q \nmid i\right\} p \\&\leq L(n) - \sum_{p=1}^\ell \frac{(q-1)^2 L(n) p}{q^{p+1}} + \frac{\ell(\ell+1)}{2} C \\ &= \frac{L(n)}{q^\ell} \left[(\ell+1)-\frac{\ell}{q} \right] + \frac{\ell(\ell+1)}{2} C \\ &\leq \frac{(\ell+1) L(n)}{q^\ell} + \frac{\ell(\ell+1)}{2} C.
\end{split}
\end{align}
On the other hand $$d_n \geq L(n) - \sum_{p=1}^\ell \frac{(q-1)^2 L(n) p}{q^{p+1}} - \frac{\ell(\ell+1)}{2} C \geq r \left[(\ell+1)-\frac{\ell}{q} \right] - \frac{\ell(\ell+1)}{2} C,$$ so $\sum_{r=1}^\infty 2^{-d_{q^\ell r}} < +\infty$. Define
$$
S_n = \{(x,y) \in X_\Omega : D_n(x,y) \leq (2 m_1 m_2)^{-d_n} \}.
$$
Clearly $\mathbb{P}_\mu(S_n) \leq 2^{-d_n}$, so $\mathbb{P}_\mu \left(\bigcap_{N \geq 1} \bigcup_{r=N}^\infty S_{q^\ell r} \right) = 0$, using Borel-Cantelli lemma. Hence for $\mathbb{P}_\mu$-almost all $(x,y) \in X_\Omega$ there exists $N(x,y)$ such that $(x,y) \notin S_n$ for all $n=q^\ell r \geq N(x,y)$. For such $(x,y)$ and $n \geq N(x,y)$, using \eqref{nombre}, we have
\begin{align*}
\begin{split}
\frac{-\log_{m_1}(D_n(x,y))}{n} &\leq \frac{d_n \log_{m_1}(2 m_1 m_2)}{n} \\&\leq \frac{(\ell+1) L(n) \log_{m_1}(2 m_1 m_2)}{n q^\ell} + \frac{\ell(\ell+1) \log_{m_1}(2 m_1 m_2)}{2n}.
\end{split}
\end{align*}
So

$$
\limsup_{r \rightarrow \infty} \frac{-\log_{m_1}(D_{q^\ell r}(x,y))}{q^\ell r} \leq \frac{(\ell+1) \log_{m_2}(2 m_1 m_2)}{q^\ell}.
$$
Finally for such $(x,y)$ we get the second desired inequality.

\end{proof}

\subsection{Study of the validity of the Ledrappier-Young formula}\label{22}

Here we will discuss the validity of the Ledrappier-Young formula in our context. Recall that for a shift-invariant ergodic measure $\mu$ on $\Sigma_{m_1,m_2}$, the Ledrappier-Young formula is (see \cite[Lemma 3.1]{ref10} for a proof)
$$
\dim_H(\mu) = \frac{1}{\log(m_1)} h_\mu(\sigma) + \left(\frac{1}{\log(m_2)}-\frac{1}{\log(m_1)}\right) h_{\pi_* \mu}(\tilde{\sigma}),
$$ 
where $\tilde{\sigma}$ is the standard shift map on $\Sigma_{m_2}$, $\pi$ is the projection on the second coordinate and $h_\mu(\sigma)$ is the entropy of $\mu$ with respect to $\sigma$. This rewrites as 
\begin{equation} \label{LY}
\dim_H(\mu) = \frac{1}{\log(m_1)} \dim_e(\mu) + \left(\frac{1}{\log(m_2)}-\frac{1}{\log(m_1)}\right) \dim_e(\pi_* \mu),
\end{equation}
where for any Borel probability measure $\nu$ on $\Sigma_{m_1,m_2}$, $\dim_e(\nu)$ denotes, whenever it exists, its entropy dimension defined by
$$
\dim_e(\nu) = \lim_{n\rightarrow \infty} -\frac{1}{n} \sum_{u \in (\mathcal{A}_1 \times \mathcal{A}_2)^n} \nu(\left[u\right]) \log \left(\nu \left(\left[u\right]\right)\right),
$$
and where $\dim_e(\pi_*\nu)$ is defined similarly. We will show that this fails to hold for $\mathbb{P}_\mu$ in general. This is expected since $\mathbb{P}_\mu$ is not shift-invariant in general. However, we will give a sufficient condition on $\mu$ for $\mathbb{P}_\mu$ to satisfy \eqref{LY}.\newline 

\hspace{0.01mm} Let $(\nu^y)_{y \in \pi \left(\Sigma_{m_1,m_2}\right)}$ be the $\pi_* \nu$-almost everywhere uniquely determined disintegration of the Borel probability measure $\nu$ on $\Sigma_{m_1,m_2}$ with respect to $\pi$. Each $\nu^y$ is a Borel probability measure on $\Sigma_{m_1,m_2}$ supported on $\pi^{-1}(\{y\})$, which can be computed using the formula
$$
\nu^y(\left[x|_n\right] \times \{y\}) = \lim_{p \rightarrow \infty} \frac{\nu \left(\left[(x_1,y_1) \cdots (x_n,y_n) y_{n+1} \cdots y_p\right] \right)}{\pi_* \nu \left(\left[y_1 \cdots y_p \right] \right)} \ \text{for} \ \pi_* \nu \text{-almost all} \ y \in \pi(\Sigma_{m_1,m_2}).
$$
For some basics on the notion of disintegrated measure we advise \cite{ref11} to the reader.\

\begin{proposition}
Let $\mu$ be a Borel probability measure on $\Omega$. Then $\pi_*(\mathbb{P}_\mu)$ is exact dimensional. Moreover $\mathbb{P}_\mu^y$ is exact dimensional for $\pi_*(\mathbb{P}_\mu) \text{-almost all} \ y \in \pi(X_\Omega)$, and we have $$\underset{y \sim \pi_*(\mathbb{P}_\mu)}{\emph{essinf}} \dim_e(\mathbb{P}_\mu^y) = \underset{y \sim \pi_*(\mathbb{P}_\mu)}{\emph{esssup}} \dim_e(\mathbb{P}_\mu^y).$$ Finally 
$$
\dim_e(\pi_*(\mathbb{P}_\mu)) + \underset{y \sim \pi_*(\mathbb{P}_\mu)}{\emph{essinf}} \dim_e(\mathbb{P}_\mu^y) \leq \dim_e(\mathbb{P}_\mu),
$$
with equality if and only if for all $p \geq 1$, for all $I \in \alpha^2_p$, the map $y \in \pi(I) \mapsto \mu^y(I)$ is $\pi_* \mu$-almost surely constant.

\end{proposition}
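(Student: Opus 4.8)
The plan is to reduce the proposition to a scale-by-scale comparison of entropies, after identifying the three entropy dimensions at play. Throughout write $H^\nu_e(\mathcal P)$ for the entropy of a finite partition $\mathcal P$ in base $e$, so that $\dim_e(\nu)=\lim_n\tfrac1n H^\nu_e(\beta_n)$ with $\beta_n$ the partition into depth-$n$ cylinders; let $\ell_i(n)$ denote the length of the word $(x,y)|_{J_i^n}$; and recall that $\alpha^2_p$ refines $\alpha^1_p$, and that $\mu^{y'}$ is carried by $\pi^{-1}(\{y'\})$, on which $\alpha^2_p$ agrees with the partition into $x$-cylinders of depth $p$. Summing the defining product of $\mathbb P_\mu$ over the $x$-coordinates and using that the blocks $J_i^n$ ($i\le n$, $q\nmid i$) partition $\llbracket 1,n\rrbracket$, one gets $\pi_*(\mathbb P_\mu)([v])=\prod_{i\le n,\,q\nmid i}\pi_*\mu([v|_{J_i^n}])$ for all $v\in\mathcal A_2^{\,n}$; hence $\pi_*(\mathbb P_\mu)$ is precisely the one-dimensional construction of \cite{ref2} associated with the closed set $\pi(\Omega)$ and the measure $\pi_*\mu$, and is therefore exact dimensional with $\dim_e(\pi_*(\mathbb P_\mu))=(q-1)^2\sum_{p\ge1}q^{-(p+1)}H^\mu_e(\alpha^1_p)$. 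On the other hand, the almost sure convergence $-\tfrac1n\log_{m_1}\mathbb P_\mu([(x,y)|_n])\to(q-1)^2\sum_{p\ge1}q^{-(p+1)}H^\mu_{m_1}(\alpha^2_p)$ recalled from \cite{ref2} in the proof of Theorem \ref{test2}, together with the accompanying uniform integrability, gives $\dim_e(\mathbb P_\mu)=(q-1)^2\sum_{p\ge1}q^{-(p+1)}H^\mu_e(\alpha^2_p)$.

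Next I would study the fibre measures. Since $\big((x,y)|_{J_i}\big)_{q\nmid i}$ are, under $\mathbb P_\mu$, independent with common law $\mu$, conditioning on the whole sequence $y$ amounts to conditioning the $x$-coordinates along each $J_i$ on $y|_{J_i}$, independently over $i$; thus for $\pi_*(\mathbb P_\mu)$-almost every $y$ one has $\mathbb P_\mu^{\,y}=\bigotimes_{q\nmid i}\mu^{\,y|_{J_i}}$, and by this product structure the depth-$n$ cylinder entropy of $\mathbb P_\mu^{\,y}$ equals $\sum_{i\le n,\,q\nmid i}H^{\mu^{y|_{J_i}}}_e(\alpha^2_{\ell_i(n)})$, the $i$-th summand being the entropy of $\alpha^2_{\ell_i(n)}$ for the fibre measure $\mu^{y|_{J_i}}$. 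Under $\pi_*(\mathbb P_\mu)$ the blocks $\big(y|_{J_i}\big)_{q\nmid i}$ are i.i.d.\ with law $\pi_*\mu$, so for each $p$ the variables $\big(H^{\mu^{y|_{J_i}}}_e(\alpha^2_p)\big)_i$ are i.i.d., bounded by $p\log m_1$, with mean $c_p:=\int H^{\mu^{y'}}_e(\alpha^2_p)\,d\pi_*\mu(y')$. Grouping the indices $i$ according to the value $p=\ell_i(n)$, using $\#\{i\le n:q\nmid i,\ \ell_i(n)=p\}\sim(q-1)^2q^{-(p+1)}n$, controlling the large-$p$ contribution through $H^{\mu^{y'}}_e(\alpha^2_p)\le p\log m_1$, and invoking Lemma \ref{proba} and the Borel--Cantelli scheme from the proof of Theorem \ref{test2}, one obtains that for $\pi_*(\mathbb P_\mu)$-almost every $y$ the measure $\mathbb P_\mu^{\,y}$ is exact dimensional with $\dim_e(\mathbb P_\mu^{\,y})=(q-1)^2\sum_{p\ge1}q^{-(p+1)}c_p$. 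As this value is a deterministic constant, the essential infimum and the essential supremum of $y\mapsto\dim_e(\mathbb P_\mu^{\,y})$ under $\pi_*(\mathbb P_\mu)$ both equal $(q-1)^2\sum_{p\ge1}q^{-(p+1)}c_p$.

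It remains to compare, for each $p$, $H^\mu_e(\alpha^1_p)+c_p$ with $H^\mu_e(\alpha^2_p)$. Here I would use the identity $c_p=H^\mu_e(\alpha^2_p\mid\sigma(\pi))$, $\sigma(\pi)$ being the $\sigma$-algebra generated by $\pi$: indeed the disintegration gives $\mu(A\mid\sigma(\pi))=\mu^y(A)$ for $A\in\alpha^2_p$, so by definition $H^\mu_e(\alpha^2_p\mid\sigma(\pi))=-\int\sum_{A\in\alpha^2_p}\mu^y(A)\log\mu^y(A)\,d\mu=\int H^{\mu^{y'}}_e(\alpha^2_p)\,d\pi_*\mu(y')=c_p$. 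Because $\alpha^2_p$ refines $\alpha^1_p$, the chain rule gives $H^\mu_e(\alpha^2_p)=H^\mu_e(\alpha^1_p)+H^\mu_e(\alpha^2_p\mid\alpha^1_p)$, while conditioning on the larger $\sigma$-algebra $\sigma(\pi)\supseteq\sigma(\alpha^1_p)$ only decreases conditional entropy, so $c_p=H^\mu_e(\alpha^2_p\mid\sigma(\pi))\le H^\mu_e(\alpha^2_p\mid\alpha^1_p)$. Adding $H^\mu_e(\alpha^1_p)$ and summing against the positive weights $(q-1)^2q^{-(p+1)}$ gives the announced inequality. Equality forces $H^\mu_e(\alpha^2_p\mid\sigma(\pi))=H^\mu_e(\alpha^2_p\mid\alpha^1_p)$ for every $p$; by the strict concavity of $t\mapsto-t\log t$ and conditional Jensen's inequality, this holds exactly when $\mu(I\mid\sigma(\pi))$ is $\sigma(\alpha^1_p)$-measurable for every $I\in\alpha^2_p$ — that is, when $y\mapsto\mu^y(I)$ depends only on $y|_p$, equivalently when the map $y\in\pi(I)\mapsto\mu^y(I)$ is $\pi_*\mu$-almost surely constant, for all $p\ge1$ and all $I\in\alpha^2_p$. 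This is the condition in the statement.

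The one step that needs genuine work is the treatment of the fibre measures: justifying the fibrewise disintegration $\mathbb P_\mu^{\,y}=\bigotimes_{q\nmid i}\mu^{\,y|_{J_i}}$ rigorously, and then rerunning the law-of-large-numbers and Borel--Cantelli machinery of Theorem \ref{test2} with the now $y$-dependent summands $H^{\mu^{y|_{J_i}}}_e(\alpha^2_{\ell_i(n)})$, so as to produce a single $\pi_*(\mathbb P_\mu)$-full set of $y$ along which the limit exists for all $n$ — the reduction from $n=q^\ell r$ to arbitrary $n$ and the interchange of $r\to\infty$ and $\ell\to\infty$ being handled exactly as there, with the tail bound in $p$ absorbing the error term. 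The factorisation of $\pi_*(\mathbb P_\mu)$, the identity $c_p=H^\mu_e(\alpha^2_p\mid\sigma(\pi))$, and the conditional-entropy inequality together with its equality case are all standard.
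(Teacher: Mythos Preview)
Your argument is correct and follows essentially the same route as the paper: identify $\pi_*(\mathbb P_\mu)=\mathbb P_{\pi_*\mu}$ and invoke the one-dimensional result, derive the product disintegration $\mathbb P_\mu^{\,y}=\bigotimes_{q\nmid i}\mu^{\,y|_{J_i}}$, use the law of large numbers to get the common fibre dimension $(q-1)^2\sum_p q^{-(p+1)}c_p$, and compare $c_p$ with $H^\mu_e(\alpha^2_p\mid\alpha^1_p)$ via Jensen. The only cosmetic difference is that you phrase the last step as the abstract inequality $H^\mu_e(\alpha^2_p\mid\sigma(\pi))\le H^\mu_e(\alpha^2_p\mid\alpha^1_p)$, whereas the paper unpacks this Jensen step explicitly over the atoms $I\in\alpha^2_p$ and their projections $\pi(I)$; the equality characterisations coincide once one notes that $\mu^y(I)=0$ for $y\notin\pi(I)$.
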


\begin{proof}

First note that for $(x,y) \in \Sigma_{m_1,m_2}$
$$
\pi_*(\mathbb{P}_\mu) (\left[y_1 \cdots y_n \right]) = \sum_{x_1,\ldots,x_n} \prod_{\substack{i \leq n \\ q \nmid i}} \mu \left(\left[(x,y)|_{J_i^n}\right] \right) = \prod_{\substack{i \leq n \\ q \nmid i}} \sum_{x_i, \ldots, x_{q^r i}} \mu \left( \left[(x,y)|_{J_i^n}\right] \right) = \mathbb{P}_{\pi_* \mu} (\left[y_1 \cdots y_n \right]).
$$
Thus $\pi_*(\mathbb{P}_\mu)$ is a Borel probability measure supported on $\pi(X_\Omega) = X_{\pi(\Omega)}$, which is equal to $\mathbb{P}_{\pi_* \mu}$. Thus, using the one-dimensional case studied in \cite{ref2} we easily get that $\pi_*(\mathbb{P}_\mu)$ is exact dimensional with
$$
\dim_e(\pi_*(\mathbb{P}_\mu)) = (q-1)^2 \sum_{p=1}^\infty \frac{H^\mu(\alpha^1_p)}{q^{p+1}}.
$$
Now we study $\mathbb{P}_\mu^y$. First observe that for $i$ such that $q \nmid i$, the map
$$
\phi_i : y \in \pi(X_\Omega) \longmapsto y|_{J_i} = (y_{q^\ell i})_{\ell=0}^\infty \in \pi(\Omega) 
$$
is measure-preserving, i.e. $(\phi_i)_*(\mathbb{P}_{\pi_* \mu}) = \pi_* \mu$. Let $p \geq n \geq 1$. For $(x,y) \in X_\Omega$ we have
\begin{align*}
\begin{split}
&\frac{\mathbb{P}_\mu \left(\left[(x_1,y_1) \cdots (x_n,y_n) y_{n+1} \cdots y_p \right]\right)}{\mathbb{P}_{\pi_* \mu}\left(\left[y_1 \cdots y_p\right]\right)} \\ & \\ &= \frac{\prod \limits_{\substack{i \leq p \\ q \nmid i}} \sum \limits_{x'_{q^k i}, \ldots, x'_{q^\ell i}} \mu \left( \left[(x_i,y_i) \cdots \left(x_{q^{k-1} i},y_{q^{k-1} i}\right) \left(x'_{q^{k} i},y_{q^{k} i}\right) \cdots \left(x'_{q^{\ell} i},y_{q^{\ell} i}\right) \right] \right)}{\prod \limits_{\substack{i \leq p \\ q \nmid i}} \sum \limits_{x'_i, \ldots, x'_{q^\ell i}} \mu \left( \left[(x'_i,y_i) \cdots \left(x'_{q^\ell i},y_{q^\ell i} \right) \right] \right)} \\ &= \prod\limits_{\substack{i \leq n \\ q \nmid i}} \frac{\mu \left(\left[(x_i,y_i) \cdots \left(x_{q^{k-1} i},y_{q^{k-1} i}\right) y_{q^k i} \cdots y_{q^\ell i} \right] \right)}{\pi_* \mu \left( \left[ y_i \cdots y_{q^\ell i} \right] \right)},
\end{split}
\end{align*}
where $q^{k-1} i \leq n < q^k i \leq q^{\ell} i \leq p < q^{\ell+1} i$. Using the remark above and letting $p \rightarrow \infty$ we deduce that for $\pi_*(\mathbb{P}_\mu) \text{-almost all} \ y$

$$
(\mathbb{P}_\mu)^y \left(\left[x|_n\right] \times \{y\}\right) = \prod\limits_{\substack{i \leq n \\ q \nmid i}} \mu^{y|_{J_i}}\left(\left[x|_{J_i^n} \right] \times \{y|_{J_i}\} \right).
$$
We will use the $\mathbb{P}_\mu$-almost everywhere defined i.i.d. random variables

$$
X_{i,n} : (x,y) \in X_\Omega \longmapsto -\log(\mu^{y|_{J_i}} \left( \left[x|_{J_i^n}\right] \times \{y|_{J_i}\}\right) \ \text{for} \ q \nmid i
$$
whose expectation is 
\begin{align*}
\begin{split}
&- \int_{\pi(X_\Omega)} \bigg( \int_{\pi^{-1}(\tilde{y})} \log \left(\mu^{\tilde{y}|_{J_i}} \left( \left[x|_{J_i^n}\right] \times \{\tilde{y}|_{J_i}\}\right) \right) d(\mathbb{P}_\mu^{\tilde{y}})(x,y) \bigg) d(\pi_*(\mathbb{P}_\mu))(\tilde{y}) \\ &=  \int_{\pi(X_\Omega)} H^{\mu^{\tilde{y}|_{J_i}}} \left(\Delta_p \left(\Omega_{\tilde{y}|_{J_i}}\right)\right) d(\pi_*(\mathbb{P}_\mu))(\tilde{y}) \\ &=  \int_{\pi(\Omega)} H^{\mu^y} \left(\Delta_p \left(\Omega_{y}\right)\right) d(\pi_*\mu)(y),
\end{split}
\end{align*}
where $\Omega_y = \pi^{-1}(\{y\}) \cap \Omega$ and $\Delta_p$ is the partition of $\Omega_y$ into cylinders of length $p$ on the first coordinate $x$, if $x|_{J_i^n}$ is of length $p$. Using again the same reasoning as in the one dimensional case when computing $\dim_H(\mathbb{P}_\mu)$ (see \cite{ref2}), we get that for $\pi_*(\mathbb{P}_\mu) \text{-almost all} \ y$, $\mathbb{P}_\mu^y$ is exact dimensional and
$$
\dim_e(\mathbb{P}_\mu^y) = (q-1)^2 \sum_{p=1}^\infty \int_{\pi(\Omega)} \frac{H^{\mu^y}\left(\Delta_p \left(\Omega_{y}\right)\right)}{q^{p+1}} d(\pi_*\mu)(y).
$$
Now we have
\begin{align*}
\begin{split}
&\int_{\pi(\Omega)} H^{\mu^y}\left(\Delta_p \left(\Omega_{y}\right)\right) d(\pi_*\mu)(y) \\&= - \int_{\pi(\Omega)} \sum_{I \in \theta_p(\Omega_y)} \mu^y(I) \log(\mu^y(I)) d(\pi_*\mu)(y) \\&= - \sum_{I \in \alpha^2_p} \int_{\pi(\Omega)} \mu^y(I \cap \pi^{-1}(\{y\})) \log(\mu^y(I \cap \pi^{-1}(\{y\}))) d(\pi_*\mu)(y) \\&= - \sum_{I \in \alpha^2_p} \int_{\pi(I)} \mu^y(I) \log(\mu^y(I)) d(\pi_*\mu)(y) \\ &\leq - \sum_{I \in \alpha^2_p} \pi_* \mu(\pi(I)) \bigg(\int_{\pi(I)} \frac{\mu^y(I)}{\pi_* \mu(\pi(I))} d(\pi_*\mu)(y) \bigg) \log \bigg(\int_{\pi(I)} \frac{\mu^y(I)}{\pi_* \mu(\pi(I))} d(\pi_*\mu)(y) \bigg) \\ &= - \sum_{I \in \alpha^2_p} \mu(I) \log \left(\frac{\mu(I)}{\pi_* \mu(\pi(I))} \right) \\&= H^\mu(\alpha^2_p|\alpha^1_p),
\end{split}
\end{align*}
using Jensen's inequality. The function $x \in \left[0,1\right] \mapsto -x\log(x)$ being strictly concave, this is a strict inequality unless for all $p \geq 1$, for all $I \in \alpha^2_p$, the map $y \in \pi(I) \mapsto \mu^y(I)$ is $\pi_* \mu$-almost surely constant.
\end{proof}

Using Lemma \ref{loc} we get

\begin{corollary}\label{LY}
If for all $p \geq 1$, for all $I \in \alpha^2_p$, the map $y \in \pi(I) \mapsto \mu^y(I)$ is almost surely constant, then $\mathbb{P}_\mu$ satisfies the Ledrappier-Young formula :
$$
\dim_H(\mathbb{P}_\mu) = \frac{1}{\log(m_1)} \dim_e(\mathbb{P}_\mu) + \left(\frac{1}{\log(m_2)}-\frac{1}{\log(m_1)}\right) \dim_e \left(\pi_* \left(\mathbb{P}_\mu\right)\right).
$$
\end{corollary}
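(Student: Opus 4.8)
The plan is to combine Theorem \ref{test2} with explicit expressions for the two entropy dimensions occurring in the Ledrappier--Young identity, thereby reducing the statement to an algebraic identity between the quantities $H^\mu_{m_2}(\alpha^1_p)$ and $H^\mu_{m_2}(\alpha^2_p)$. By the same reasoning as in the one-dimensional case of \cite{ref2} (or directly from the almost sure convergence $\frac{-\log_{m_1}(\mathbb{P}_\mu([(x,y)|_n]))}{n}\to(q-1)^2\sum_{p\ge1}q^{-(p+1)}H^\mu_{m_1}(\alpha^2_p)$ recalled in the proof of Theorem \ref{test2}) one has $\dim_e(\mathbb{P}_\mu)=(q-1)^2\sum_{p\ge1}q^{-(p+1)}H^\mu(\alpha^2_p)$, while $\dim_e(\pi_*(\mathbb{P}_\mu))=(q-1)^2\sum_{p\ge1}q^{-(p+1)}H^\mu(\alpha^1_p)$ follows from the identity $\pi_*(\mathbb{P}_\mu)=\mathbb{P}_{\pi_*\mu}$ established in the previous proof (all entropies here with the natural logarithm). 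Substituting these into the right-hand side of the Ledrappier--Young formula and passing to base-$m_2$ logarithms via $\gamma=\log(m_2)/\log(m_1)$, the identity to be proved becomes
$$\dim_H(\mathbb{P}_\mu)=(q-1)^2\sum_{p=1}^\infty\frac{\gamma\,H^\mu_{m_2}(\alpha^2_p)+(1-\gamma)\,H^\mu_{m_2}(\alpha^1_p)}{q^{p+1}}.$$

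Next I would use the hypothesis to turn the ``mixed'' entropies $H^\mu_{m_2}(\alpha^2_m\lor\alpha^1_p)$ appearing in Theorem \ref{test2} into pure ones. Integrating the disintegration formula, the assumption that $y\in\pi(I)\mapsto\mu^y(I)$ is constant for every $I\in\alpha^2_m$ and every $m$ is equivalent to the factorization
$$\mu\big(\big[(x_1,y_1)\cdots(x_m,y_m)\,y_{m+1}\cdots y_p\big]\big)=\frac{\mu\big(\big[(x_1,y_1)\cdots(x_m,y_m)\big]\big)}{\pi_*\mu\big(\big[y_1\cdots y_m\big]\big)}\,\pi_*\mu\big(\big[y_1\cdots y_p\big]\big)$$
for all $p\ge m$, i.e.\ to the conditional independence, under $\mu$, of $(x_1,\dots,x_m)$ and $(y_{m+1},\dots,y_p)$ given $(y_1,\dots,y_m)$ --- the mechanism already exploited in the proof of the preceding proposition, and presumably the content of Lemma \ref{loc}. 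By the chain rule for entropy this yields, for all $0\le m\le p$,
$$H^\mu_{m_2}(\alpha^2_m\lor\alpha^1_p)=H^\mu_{m_2}(\alpha^2_m)+H^\mu_{m_2}(\alpha^1_p)-H^\mu_{m_2}(\alpha^1_m),$$
with the convention $\alpha^1_0=\alpha^2_0=\{\Omega\}$. Applying this with $m=p-j$ and with $m=p-j-1$ rewrites every term of the formula of Theorem \ref{test2} as a combination of the $H^\mu_{m_2}(\alpha^1_\cdot)$ and the $H^\mu_{m_2}(\alpha^2_\cdot)$ alone.

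It then remains to resum. Reindexing the two tail series of Theorem \ref{test2} by a single variable (writing $p=m+j$ in the second and $p=m+j+1$ in the third, and using $H^\mu_{m_2}(\alpha^2_0)=H^\mu_{m_2}(\alpha^1_0)=0$ to absorb the lowest indices), the coefficient of $H^\mu_{m_2}(\alpha^2_m)$ collapses to $\frac{q-1}{q^{m+j+1}}\big[(q^{j+1}\gamma-1)+(1-q^j\gamma)\big]=\frac{(q-1)^2\gamma}{q^{m+1}}$, using $q^{j+1}\gamma-q^j\gamma=q^j\gamma(q-1)$. For the $H^\mu_{m_2}(\alpha^1_\cdot)$ terms one separates each $H^\mu_{m_2}(\alpha^1_{m+j})-H^\mu_{m_2}(\alpha^1_m)$ and $H^\mu_{m_2}(\alpha^1_{m+j+1})-H^\mu_{m_2}(\alpha^1_m)$ and shifts indices once more: the positive parts recombine with the finite head $\sum_{p\le j}q^{-(p+1)}H^\mu_{m_2}(\alpha^1_p)$ into $(q-1)^2\sum_{p\ge1}q^{-(p+1)}H^\mu_{m_2}(\alpha^1_p)$ (here $(q^{j+1}\gamma-1)+q(1-q^j\gamma)=q-1$), while the negative parts contribute $-(q-1)^2\gamma\sum_{p\ge1}q^{-(p+1)}H^\mu_{m_2}(\alpha^1_p)$, the sum being the $(1-\gamma)$-term in the displayed target. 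Both coefficients then match, which proves the corollary. The one genuinely delicate point in this scheme is the last piece of bookkeeping: one must track the ranges $p\le j$ versus $p>j$, the boundary terms, and the places where the defining inequalities $q^j\le\gamma^{-1}<q^{j+1}$ of $j$ are used; everything else is routine given Theorem \ref{test2}, the entropy-dimension formulas, and the entropy splitting supplied by the hypothesis.
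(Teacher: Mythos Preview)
Your argument is correct but follows a different route from the paper. The paper derives the corollary in one line from the previous Proposition together with Lemma~\ref{loc}: that proposition shows that, under the hypothesis, the Jensen inequality becomes an equality, so $\dim_e(\mathbb{P}_\mu)=\dim_e(\pi_*(\mathbb{P}_\mu))+\dim_e(\mathbb{P}_\mu^y)$ (with the fiber dimension essentially constant); Lemma~\ref{loc} is a general Marstrand-type statement bounding the local dimension of a measure with respect to $d$ by the entropy dimensions of the measure, its projection, and its conditional measures, and these bounds collapse to the Ledrappier--Young identity precisely when that additivity holds. Your parenthetical guess that Lemma~\ref{loc} encodes the factorization $\mu([(x_1,y_1)\cdots(x_m,y_m)y_{m+1}\cdots y_p])=\mu(I)\,\pi_*\mu([y_1\cdots y_p])/\pi_*\mu([y_1\cdots y_m])$ is therefore mistaken, though this does not affect your proof since you derive that factorization yourself directly from the disintegration. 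Your approach is more elementary: you bypass the conditional measures $\mathbb{P}_\mu^y$ and Lemma~\ref{loc} entirely, instead plugging the entropy splitting $H^\mu_{m_2}(\alpha^2_m\lor\alpha^1_p)=H^\mu_{m_2}(\alpha^2_m)+H^\mu_{m_2}(\alpha^1_p)-H^\mu_{m_2}(\alpha^1_m)$ into the explicit formula of Theorem~\ref{test2} and resumming. The resummation is straightforward (and your sketch of it is accurate), so the only real cost is that the computation has to be redone in each dimension, whereas the paper's route via Lemma~\ref{loc} is dimension-free once the analogue of the Proposition is in hand.
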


This sufficient condition is equivalent to saying that for all $p \geq 1$, for all $I = \left[(x_1,y_1) \cdots (x_p,y_p)\right] \in \alpha^2_p$, for $\pi_* \mu$-almost all $y \in \pi(I)$ we have 
$$
\mu^y(I) = \frac{\mu(I)}{\pi_*\mu(\pi(I))} = \frac{\mu \left(\left[(x_1,y_1) \cdots (x_p,y_p)\right] \right)}{\mu \left(\left[y_1 \cdots y_p\right] \right)}.
$$
For instance, this is clearly satisfied when $\mu$ is an inhomogeneous Bernoulli product on $\Omega$. In this case $\mathbb{P}_\mu$ is not shift-invariant in general. However, we can easily build examples where the equality in Corollary \ref{LY} does not hold.

\begin{example}
Suppose that $j=0$. Then there exists $\Omega$ and $\mu$ a Borel probability measure on $\Omega$ such that
$$
\dim_H(\mathbb{P}_\mu) < \frac{1}{\log(m_1)} \dim_e(\mathbb{P}_\mu) + \left(\frac{1}{\log(m_2)}-\frac{1}{\log(m_1)}\right) \dim_e(\pi_*(\mathbb{P}_\mu)).
$$ 
\end{example}

Indeed, using the property $H^\mu(\alpha^2_{p-1} \lor \alpha^1_p) = H^\mu(\alpha^1_p|\alpha^2_{p-1}) + H^\mu(\alpha^2_{p-1})$ we have

$$
\dim_H(\mathbb{P}_\mu) = (q-1)^2 \sum_{p=1}^\infty \frac{H^\mu_{m_1}(\alpha^2_p)}{q^{p+1}} + (q-1)(1-\gamma)  \sum_{p=1}^\infty \frac{H^\mu_{m_2}(\alpha^1_p|\alpha^2_{p-1})}{q^{p}}
$$
and
\begin{align*}
\begin{split}
&\frac{1}{\log(m_1)} \dim_e(\mathbb{P}_\mu) + \left(\frac{1}{\log(m_2)}-\frac{1}{\log(m_1)}\right) \dim_e(\pi_*(\mathbb{P}_\mu)) \\&= (q-1)^2 \sum_{p=1}^\infty \frac{H^\mu_{m_1}(\alpha^2_p)}{q^{p+1}} + (q-1)^2 \left(\frac{1}{\log(m_2)}-\frac{1}{\log(m_1)}\right) \sum_{p=1}^\infty \frac{H^\mu(\alpha^1_p)}{q^{p+1}},
\end{split}
\end{align*}

It is then enough to choose $\Omega$ and $\mu$ such that 
\begin{itemize}
\item $H^\mu(\alpha^1_1) = 0$,
\item $H^\mu(\alpha^1_p | \alpha^2_{p-1}) = 0$ for all $p \geq 2$,
\item $H^\mu(\alpha^1_p) > 0$ for $p \geq 2$.
\end{itemize}
Such $\Omega$ and $\mu$ yield the desired example.

\subsection{Lower bound for $\dim_H(X_\Omega)$} \label{23}

We are now interested in maximizing $\dim_H(\mathbb{P}_\mu)$ over all Borel probability measures $\mu$ on $\Omega$. We define first the $j^\text{th}$ tree of prefixes of $\Omega$, which is a directed graph $\Gamma_j(\Omega)$ whose set of vertices is $\bigcup_{k=0}^\infty \text{Pref}_{k,j}(\Omega)$, where $\text{Pref}_{0,j}(\Omega) = \{\varnothing\}$. There is a directed edge from a prefix $$u = (x_1,y_1) \cdots (x_k,y_k) y_{k+1} \cdots y_{k+j}$$ to another one $v$ if $$v = (x_1,y_1) \cdots (x_k,y_k) (x_{k+1},y_{k+1}) y_{k+2} \cdots y_{k+j} y_{k+j+1}$$ for some $x_{k+1} \in \{0,\ldots,m_1-1\}$ and $y_{k+j+1} \in \{0,\ldots,m_2-1\}$. Moreover there is an edge from $\varnothing$ to every $u \in \text{Pref}_{1,j}(\Omega)$. $\Gamma_j(\Omega)$ is then a tree with its outdegree being bounded by $m_1 m_2$ (except the first edges from $\varnothing$, which can be more numerous). The following result is an analog of \cite[Lemma 2.1]{ref2}.

\begin{lemma} \label{arbre}
Let $u \in \rm{Pref}_{1,j}(\Omega)$ and $\Gamma_{u,j}(\Omega)$ be the tree of followers of $u$ in $\Gamma_j(\Omega)$. Let $V_{u,j}(\Omega)$ be its set of vertices. Then there exists a unique vector $t = t(u) \in \left[1,m_2^{\frac{2}{\gamma (q-1)}}\right]^{V_{u,j}(\Omega)}$ such that for all $(x_1,y_1) \cdots (x_k,y_k) y_{k+1} \cdots y_{k+j} \in V_{u,j}(\Omega)$
\begin{equation} \label{solution}
t_{(x_1,y_1) \cdots (x_k,y_k) y_{k+1} \cdots y_{k+j}}^{q^{j+1} \gamma} = \sum_{y'_{k+j+1}} \left( \sum_{x'_{k+1}} t_{(x_1,y_1) \cdots (x_k,y_k) (x'_{k+1},y_{k+1}) y_{k+2} \cdots y_{k+j} y'_{k+j+1}} \right)^{q^j \gamma},
\end{equation}
the sums being taken over the followers of $(x_1,y_1) \cdots (x_k,y_k) y_{k+1} \cdots y_{k+j}$ in $\Gamma_{u,j}(\Omega)$.

\end{lemma}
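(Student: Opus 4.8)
The plan is to exhibit $t(u)$ as the unique fixed point of a contraction on a complete metric space. Set $M := m_2^{2/(\gamma(q-1))}$ and let $E := \bigl[1,M\bigr]^{V_{u,j}(\Omega)}$, the set of vectors on the vertices of $\Gamma_{u,j}(\Omega)$ with all entries in $[1,M]$. Define $T \colon E \to \mathbb{R}^{V_{u,j}(\Omega)}$ by letting $(Tt)_v$ be the $(q^{j+1}\gamma)$-th root of the right-hand side of \eqref{solution}, with the unknown vector there replaced by $t$; thus $t \in E$ solves \eqref{solution} if and only if $Tt = t$. The sums appearing in \eqref{solution} are finite, since the outdegree in $\Gamma_{u,j}(\Omega)$ is at most $m_1 m_2$, and they are nonempty: any $\omega \in \Omega \cap [v]$ produces a follower of $v$ by revealing one further $x$-coordinate and one further $y$-coordinate of $\omega$.

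I would first check that $T(E) \subseteq E$. The lower bound is immediate: each summand in \eqref{solution} is $\geq 1$, so $(Tt)_v \geq 1$. For the upper bound, there are at most $m_2$ choices of $y'_{k+j+1}$ and, for each, at most $m_1$ choices of $x'_{k+1}$, and $t_{v'} \leq M$, so $(Tt)_v \leq \bigl(m_2\,(m_1 M)^{q^j\gamma}\bigr)^{1/(q^{j+1}\gamma)}$. Using the identities $m_1^\gamma = m_2$ and $M^{\gamma(q-1)} = m_2^2$, this last quantity is $\leq M$ exactly when $m_2^{1+q^j} \leq m_2^{2q^j}$, i.e. when $q^j \geq 1$, which holds since $q \geq 2$ and $j \geq 0$. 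This is precisely where the exponent $2/(\gamma(q-1))$ in the definition of $M$ is used.

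Next I would equip $E$ with the metric $\rho(s,t) := \sup_{v \in V_{u,j}(\Omega)} \lvert \log s_v - \log t_v \rvert$; via $t \mapsto \log t$ this identifies $(E,\rho)$ with the set of maps $V_{u,j}(\Omega) \to [0,\log M]$ under the uniform norm, which is complete. If $\rho(s,t) = \delta$, then $e^{-\delta} t_{v'} \leq s_{v'} \leq e^{\delta} t_{v'}$ for every follower $v'$ of $v$; summing over $x'_{k+1}$, raising to the power $q^j\gamma$, summing over $y'_{k+j+1}$, and extracting the $(q^{j+1}\gamma)$-th root turns this into $e^{-\delta/q}(Tt)_v \leq (Ts)_v \leq e^{\delta/q}(Tt)_v$, because the exponents compose to $q^j\gamma / (q^{j+1}\gamma) = 1/q$. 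Hence $\rho(Ts,Tt) \leq \rho(s,t)/q$, so $T$ is a $1/q$-contraction, and the Banach fixed point theorem yields the unique $t = t(u) \in E$ satisfying \eqref{solution}. The only genuinely delicate point is the invariance $T(E) \subseteq E$ of the prescribed box, which rests on the interplay of $\gamma$, $q$, $j$ and $M$ together with $m_1^\gamma = m_2$ (and is consistent with $q^j\gamma \leq 1 < q^{j+1}\gamma$); the contraction estimate and the conclusion are then routine.
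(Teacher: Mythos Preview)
Your proof is correct. Both you and the paper reduce the problem to a self-map of the box $E=\bigl[1,M\bigr]^{V_{u,j}(\Omega)}$ with $M=m_2^{2/(\gamma(q-1))}$, but the arguments diverge thereafter. The paper observes that the map is monotone for the coordinatewise order, iterates it from the constant vector $1$, and uses compactness of $E$ to extract a fixed point; uniqueness is then obtained by a separate scaling argument (if $t,t'$ are two fixed points with $t\le\omega t'$ and $\omega$ minimal, then $t=F(t)\le F(\omega t')=\omega^{1/q}t'$, a contradiction). You instead equip $E$ with the Thompson metric $\rho(s,t)=\sup_v|\log s_v-\log t_v|$ and show directly that the map is a $1/q$-contraction, so the Banach fixed point theorem gives existence and uniqueness in one stroke. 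The two arguments are closely related---the paper's homogeneity identity $F(\omega t)=\omega^{1/q}F(t)$ is exactly what underlies your contraction estimate---but your route is a bit more streamlined, and you also make explicit the verification of $T(E)\subseteq E$ (which the paper states without proof).
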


\begin{proof}
Let $Z = \left[1,m_2^{\frac{2}{\gamma (q-1)}}\right]^{V_{u,j}(\Omega)}$  and $F : Z \rightarrow Z$ be given by
$$
F(z_{(x_1,y_1) \cdots (x_k,y_k) y_{k+1} \cdots y_{k+j}}) = \left(\sum_{y'_{k+j+1}} \left( \sum_{x'_{k+1}} z_{(x_1,y_1) \cdots (x_k,y_k) (x'_{k+1},y_{k+1}) y_{k+2} \cdots y_{k+j} y'_{k+j+1}} \right)^{q^j \gamma} \right)^{\frac{1}{q^{j+1} \gamma}}.
$$
We can see that $F$ is monotone for the pointwise partial order $\leq$, defined as $$z \leq z' \Leftrightarrow \forall v \in V_{u,j}(\Omega), \ z_v \leq z'_v$$ for $z,z' \in Z$. Indeed since $q^j \gamma$, $\frac{1}{q^{j+1} \gamma} \geq 0$ we have
$$
z \leq z' \Longrightarrow F(z) \leq F(z').
$$
Denote by $1$ the constant function equal to $1$ over $Z$. Then $1 \leq F(1) \leq F^2(1) \leq \cdots$, so by compactness $(F^n(1))_{n \geq 1}$ has a pointwise limit $t$, which is a fixed point of $F$. Let us now verify the uniqueness. Suppose that $t$ and $t'$ are two fixed points of $F$ and that $t$ is not smaller than $t'$ for $\leq$ (without loss of generality). Let 
$$
\omega = \inf \{ \xi > 1, \ t \leq \xi t' \}.
$$
Clearly $\omega \leq m_2^{\frac{2}{\gamma (q-1)}}$, and by continuity we have $t \leq \omega t'$, so $\omega > 1$. Now
$$
t = F(t) \leq F(\omega t') = \omega^{\frac{1}{q}} F(t') = \omega^{\frac{1}{q}} t',
$$
contradicting the definition of $\omega$. \newline
\end{proof}

Furthermore we define 
$$
t_\varnothing = \sum_{y'_1} \bigg( \sum_{y'_2} \bigg( \cdots \bigg( \sum_{y'_{j+1}} \bigg( \sum_{x'_1} t_{(x'_1,y'_1) y'_2 \cdots y'_{j+1}} \bigg)^{q^j \gamma} \bigg)^{\frac{1}{q}} \cdots \bigg)^\frac{1}{q} \bigg)^\frac{1}{q}.\\
$$

\begin{proposition} \label{mesure}
For $u = (x_1,y_1) \cdots (x_k,y_k) y_{k+1} \cdots y_{k+j} \in \textup{Pref}_{k,j}(\Omega)$ define
\begin{align*}
\begin{split}
\mu(\left[u\right]) = &\frac{t_{(x_1,y_1) y_2 \cdots y_{j+1}} \left(\sum_{x'_1} t_{(x'_1,y_1) y_2 \cdots y_{j+1}} \right)^{q^j \gamma-1}}{t_\varnothing} \\ &\boldsymbol{\cdot}\prod_{p=0}^{j-1} \bigg(\sum_{y'_{j+1-p}} \bigg( \sum_{y'_{j+2-p}} \bigg( \cdots \bigg(\sum_{y'_{j+1}} \bigg(\sum_{x'_1} t_{(x'_1,y_1) y_2 \cdots y_{j-p} y'_{j+1-p} \cdots y'_{j+1}} \bigg)^{q^j \gamma} \bigg)^{\frac{1}{q}} \cdots \bigg)^{\frac{1}{q}} \bigg)^{\frac{1}{q}} \bigg)^{\frac{1-q}{q}} \\ &\boldsymbol{\cdot} \prod_{p=2}^k \frac{t_{(x_1,y_1) \cdots (x_p,y_p) y_{p+1} \cdots y_{p+j}} \left(\sum_{x'_p} t_{(x_1,y_1) \cdots (x'_p,y_p) y_{p+1} \cdots y_{p+j}} \right)^{q^j \gamma-1}}{t_{(x_1,y_1) \cdots (x_{p-1},y_{p-1}) y_{p} \cdots y_{p-1+j}}^{q^{j+1} \gamma}},
\end{split}
\end{align*}
where there are $p+2$ sums and $p$ exponents $\frac{1}{q}$ in each term of the first product. This defines a Borel probability measure on $\Omega$ such that $\mathbb{P}_\mu$ is the unique optimal measure, i.e. such that $\dim_H(\mathbb{P}_\mu)$ is maximal over all Borel probability measures $\mu$ on $\Omega$. Moreover we have $\dim_H(\mathbb{P}_\mu) = \frac{q-1}{q} \log_{m_2}(t_\varnothing)$. Using Theorem \ref{calme} we deduce that 
$$
\dim_H(X_\Omega) \geq \frac{q-1}{q} \log_{m_2}(t_\varnothing).
$$
\end{proposition}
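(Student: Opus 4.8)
The plan is to proceed in three stages: check that the displayed formula genuinely defines a Borel probability measure on $\Omega$; evaluate $\dim_H(\mathbb{P}_\mu)$ for this $\mu$; and prove optimality and uniqueness among all the measures $\mathbb{P}_\nu$.

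\emph{Stage 1: $\mu$ is well defined.} I would read the formula as defining a pre-measure on the semi-algebra generated by the generalized cylinders $[u]$, $u \in \textup{Pref}_{k,j}(\Omega)$, $k \ge 0$ (an ordinary cylinder $[(x_1,y_1)\cdots(x_k,y_k)]$ then receives mass $\sum_{y_{k+1},\dots,y_{k+j}} \mu([u])$). Two consistency checks are needed: $\mu([\varnothing]) = 1$, and $\mu([u]) = \sum \mu([u'])$, the sum running over the followers $u'$ of $u$ in $\Gamma_j(\Omega)$. For a non-root node $u$ the quotient $\mu([u'])/\mu([u])$ is exactly the last factor $t_{u'}(\sum_{x'} t)^{q^j\gamma - 1}/t_u^{q^{j+1}\gamma}$, and summing it over the followers $u'$ (i.e. over $x'_{k+1}$ and $y'_{k+j+1}$) produces $\sum_{y'_{k+j+1}}(\sum_{x'_{k+1}} t_{u'})^{q^j\gamma}$, which equals $t_u^{q^{j+1}\gamma}$ by the fixed-point identity \eqref{solution} of Lemma \ref{arbre}; the boundary factors involving $t_\varnothing$ and the nested $\frac1q$-powers play the analogous role at the root, where $\Gamma_j(\Omega)$ jumps directly from $\varnothing$ to $\textup{Pref}_{1,j}(\Omega)$. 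Kolmogorov's extension theorem then gives the Borel probability measure $\mu$ on the compact set $\Omega$.

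\emph{Stage 2: value of the dimension.} I would feed this $\mu$ into the formula of Theorem \ref{test2}. The key observation is that $\alpha^2_{p-j} \lor \alpha^1_p$ and $\alpha^2_{p-j-1} \lor \alpha^1_p$ are precisely the partitions of $\Omega$ into the generalized cylinders $[u]$ with $u \in \textup{Pref}_{p-j,j}(\Omega)$ and $u \in \textup{Pref}_{p-j-1,j+1}(\Omega)$ respectively, i.e. the partitions read off from the levels of $\Gamma_j(\Omega)$ and of its shift. Since $\textup{Pref}_{k+1,j}(\Omega)$ refines $\textup{Pref}_{k,j}(\Omega)$, the chain rule rewrites each $H^\mu_{m_2}$ as a sum of edge-entropies along the tree, so the double sum over $p$ in Theorem \ref{test2} becomes a sum over the vertices of $\Gamma_j(\Omega)$, weighted by explicit powers of $q^{-1}$. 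Substituting the values of $-\log_{m_2}(\mu([u'])/\mu([u]))$ in terms of $\log_{m_2} t$ and performing a second telescoping across the levels collapses everything, together with the boundary term $(q-1)^2\sum_{p=1}^j q^{-(p+1)} H^\mu_{m_2}(\alpha^1_p)$, to $\frac{q-1}{q}\log_{m_2}(t_\varnothing)$; as $\mathbb{P}_\mu$ is exact dimensional by Theorem \ref{test2}, this is $\dim_H(\mathbb{P}_\mu)$.

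\emph{Stage 3: optimality, uniqueness, conclusion.} For an arbitrary Borel probability measure $\nu$ on $\Omega$ I would carry out the same regrouping of $\dim_H(\mathbb{P}_\nu)$ into a vertex sum over $\Gamma_j(\Omega)$, but without inserting explicit values. At each vertex $v$ the local contribution is a weighted entropy of the conditional law $(\nu(u'\mid u))_{u'}$ of $\nu$ over the followers; comparing it to $\log t_v^{q^{j+1}\gamma} = \log\sum_{y'}(\sum_{x'} t_{u'})^{q^j\gamma}$ by the weighted log-sum inequality (equivalently Jensen for the strictly concave map $x \mapsto -x\log x$, or a Hölder inequality, with the powers $t_{u'}$ furnishing the Lagrange weights) gives a node-wise bound that telescopes up the tree exactly as in Stage 2 to $\dim_H(\mathbb{P}_\nu) \le \frac{q-1}{q}\log_{m_2}(t_\varnothing) = \dim_H(\mathbb{P}_\mu)$. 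Strict concavity forces equality only when at every vertex the conditional probabilities of $\nu$ agree with those of $\mu$; these determine all generalized-cylinder masses, so $\nu = \mu$ and $\mathbb{P}_\nu = \mathbb{P}_\mu$. Since $\mathbb{P}_\mu$ is supported on $X_\Omega$, Theorem \ref{calme} then yields $\dim_H(X_\Omega) \ge \dim_H(\mathbb{P}_\mu) = \frac{q-1}{q}\log_{m_2}(t_\varnothing)$. I expect the main obstacle to be the bookkeeping in Stages 2 and 3: aligning the three families of partitions of Theorem \ref{test2} with the levels of $\Gamma_j(\Omega)$, tracking how $j$ shifts the indices, and verifying that the telescoping in the construction of $\mu$ and the telescoping in the node-wise optimization produce identical weights, so that both collapse to $t_\varnothing$ — which is exactly what the vector $t$ of Lemma \ref{arbre} is engineered to guarantee.
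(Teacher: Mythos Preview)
Your proposal is correct and uses the same ingredients as the paper, but organized differently. The paper does not separate the argument into your three stages; instead it sets up the variational problem $S(\Omega)=\max_\mu S(\Omega,\mu)$ directly and solves it by a single recursive optimization: conditioning all the entropies in Theorem \ref{test2} on $\alpha^2_1\lor\alpha^1_{j+1}$ produces the self-similar identity
\[
S(\Omega,\mu)=\text{[first-level terms]}+\tfrac{1}{q}\sum_{C}\theta_C\,S(\Omega_C,\mu_C),
\]
which is then maximized level by level via Lemma \ref{convexe} (your Jensen/log-sum step). This simultaneously derives the optimal weights $\theta_C$, identifies the resulting numbers $z_u=m_2^{S(\Omega_u)/(\gamma(q-1))}$ with the $t_u$ of Lemma \ref{arbre} by uniqueness, and reads off $S(\Omega)=\tfrac{q-1}{q}\log_{m_2}(t_\varnothing)$ --- so your Stages 1--3 are absorbed into one construction, and the fact that $\mu$ is a probability measure never needs a separate check. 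Your route (verify $\mu$, then compute, then bound) is more pedestrian but has the virtue of keeping the telescoping explicit; the paper's route is shorter and explains \emph{why} the formula for $\mu$ looks the way it does, at the cost of hiding the cancellation inside the recursion.
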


\begin{proof}
Let
\begin{align*}
\begin{split}
S(\Omega,\mu) = (q-1)^2 \sum_{p=1}^j \frac{H^\mu_{m_2}(\alpha^1_p)}{q^{p+1}} &+ (q-1)(1-q^j \gamma) \sum_{p=j+1}^\infty \frac{H^\mu_{m_2}(\alpha^2_{p-j-1} \lor \alpha^1_p)}{q^p} \\ &+ (q-1)(q^{j+1} \gamma-1) \sum_{p=j+1}^\infty \frac{H^\mu_{m_2}(\alpha^2_{p-j} \lor \alpha^1_p)}{q^{p+1}}.
\end{split}
\end{align*}
We try to optimize $S(\Omega,\mu)$ over all Borel probability measures $\mu$ on $\Omega$. Let $S(\Omega) = \max_\mu S(\Omega,\mu)$. Recall that for some measurable partitions $\mathcal{P}$,$\mathcal{Q}$ of $\Omega$ we have
$$
H^\mu_{m_2}(\mathcal{P}|\mathcal{Q}) = \sum_{Q \in \mathcal{Q}} \left( - \sum_{P \in \mathcal{P}} \mu(P|Q) \log_{m_2}(\mu(P|Q)) \right) \mu(Q).
$$ 
Let $p \geq j+2$. We have 
$$
H^\mu_{m_2}(\alpha^2_{p-j-1} \lor \alpha^1_p) = H^\mu_{m_2}(\alpha^2_{p-j-1} \lor \alpha^1_p|\alpha^2_1 \lor \alpha^1_{j+1}) + H^\mu_{m_2}(\alpha^2_1 \lor \alpha^1_{j+1})
$$
and 
$$
H^\mu_{m_2}(\alpha^2_{p-j} \lor \alpha^1_p) = H^\mu_{m_2}(\alpha^2_{p-j} \lor \alpha^1_p|\alpha^2_1 \lor \alpha^1_{j+1}) + H^\mu_{m_2}(\alpha^2_1 \lor \alpha^1_{j+1}).
$$
Moreover

\begin{align*}
\begin{split}
&H^\mu_{m_2}(\alpha^2_{p-j-1} \lor \alpha^1_p|\alpha^2_1 \lor \alpha^1_{j+1}) \\&= \sum_{x_1,y_1,y_2,\ldots,y_{j+1}} \theta_{(x_1,y_1) y_2 \cdots y_{j+1}} H^{\mu_{(x_1,y_1)y_2 \cdots y_{j+1}}}_{m_2}\left(\alpha^2_{p-j-2} \lor \alpha^1_{p-1} \left(\Omega_{(x_1,y_1)y_2 \cdots y_{j+1}}\right)\right)
\end{split}
\end{align*}
and

\begin{align*}
\begin{split}
&H^\mu_{m_2}(\alpha^2_{p-j} \lor \alpha^1_p|\alpha^2_1 \lor \alpha^1_{j+1}) \\&= \sum_{x_1,y_1,y_2,\ldots,y_{j+1}} \theta_{(x_1,y_1) y_2 \cdots y_{j+1}} H^{\mu_{(x_1,y_1) y_2 \cdots y_{j+1}}}_{m_2}\left(\alpha^2_{p-j-1} \lor \alpha^1_{p-1}\left(\Omega_{(x_1,y_1)y_2 \cdots y_{j+1}}\right)\right),
\end{split}
\end{align*}
where $$\theta_{(x_1,y_1)y_2 \cdots y_{j+1}} = \mu(\left[(x_1,y_1)y_2 \cdots y_{j+1}\right]),$$ and $H_{m_2}^{\mu_{(x_1,y_1) y_2,\cdots y_{j+1}}}\left(\alpha^2_{p-j-2} \lor \alpha^1_{p-1}\left(\Omega_{(x_1,y_1) y_2 \cdots y_{j+1}}\right)\right)$ is the entropy of the partition of $\Omega_{(x_1,y_1) y_2 \cdots y_{j+1}}$, the follower set of $(x_1,y_1)$ in $\Omega$ with $y_2 \cdots y_{j+1}$ being fixed, with respect to $\mu_{(x_1,y_1)y_2 \cdots y_{j+1}}$ which is the normalized measure induced by $\mu$ on $\Omega_{(x_1,y_1) y_2 \cdots y_{j+1}}$. Then
\begin{align*}
\begin{split}
S(\Omega,\mu) = (q-1)^2 \sum_{p=1}^j \frac{H^\mu_{m_2}(\alpha^1_p)}{q^{p+1}} &+ \frac{(q-1)(1-q^j \gamma)}{q^{j+1}} H^\mu_{m_2}(\alpha^1_{j+1}) + \frac{\gamma (q-1)}{q} H^\mu_{m_2}(\alpha^2_1 \lor \alpha^1_{j+1}) \\ &+ \frac{1}{q} \sum_{x_1,y_1,y_2,\ldots,y_{j+1}} \theta_{(x_1,y_1) y_2 \cdots y_{j+1}} S \left(\Omega_{(x_1,y_1) y_2 \cdots y_{j+1}},\mu_{(x_1,y_1) y_2 \cdots y_{j+1}}\right).
\end{split}
\end{align*}
Observe that the measure is completely determined by the knowledge of $\theta_{(x_1,y_1) y_2 \cdots y_{j+1}}$ and $\mu_{(x_1,y_1) y_2 \cdots y_{j+1}}$ for all $(x_1,y_1) y_2 \cdots y_{j+1}$. The optimization problems on $\Omega_{(x_1,y_1) y_2 \cdots y_{j+1}}$ being independent we get
\begin{align*}
\begin{split}
S(\Omega) = \max_{\theta_{(x_1,y_1) y_2 \cdots y_{j+1}}} (q-1)^2 \sum_{p=1}^j \frac{H^\mu_{m_2}(\alpha^1_p)}{q^{p+1}} &+ \frac{(q-1)(1-q^j \gamma)}{q^{j+1}} H^\mu_{m_2}(\alpha^1_{j+1}) \\&+ \frac{\gamma (q-1)}{q} H^\mu_{m_2}(\alpha^2_1 \lor \alpha^1_{j+1}) \\ &+ \frac{1}{q} \sum_{x_1,y_1,y_2,\ldots,y_{j+1}} \theta_{(x_1,y_1) y_2 \cdots y_{j+1}} S \left(\Omega_{(x_1,y_1) y_2 \cdots y_{j+1}}\right).
\end{split}
\end{align*}
After factorizing, we have
\begin{align*}
\begin{split}
S(\Omega) &= \max \frac{q-1}{q} \bigg( H^\mu_{m_2}(\beta_1) + \frac{1}{q} \sum_{y_1} \theta_{y_1} \bigg( -\sum_{y_2} \frac{\theta_{y_1 y_2}}{\theta_{y_1}} \log_{m_2} \left(\frac{\theta_{y_1 y_2}}{\theta_{y_1}} \right) \\&+ \frac{1}{q} \sum_{y_2} \frac{\theta_{y_1 y_2}}{\theta_{y_1}} \bigg(-\sum_{y_3} \frac{\theta_{y_1 y_2 y_3}}{\theta_{q_1 y_2}} \log_{m_2} \left(\frac{\theta_{y_1 y_2 y_3}}{\theta_{q_1 y_2}} \right) \\ &+ \frac{1}{q} \sum_{y_3} \frac{\theta_{y_1 y_2 y_3}}{\theta_{y_1 y_2}} \bigg( \cdots + q^j \gamma \sum_{y_{j+1}} \frac{\theta_{y_1 \cdots y_{j+1}}}{\theta_{y_1 \cdots y_j}} \bigg( -\sum_{x_1} \frac{\theta_{(x_1,y_1) y_2 \cdots y_{j+1}}}{\theta_{y_1 \cdots y_{j+1}}} \log_{m_2} \left(\frac{\theta_{(x_1,y_1) y_2 \cdots y_{j+1}}}{\theta_{y_1 \cdots y_{j+1}}} \right) \\ &+ \frac{1}{\gamma (q-1)} \sum_{x_1} \frac{\theta_{(x_1,y_1) y_2 \cdots y_{j+1}}}{\theta_{y_1 \cdots y_{j+1}}} S \left( \Omega_{(x_1,y_1) y_2 \cdots y_{j+1}} \right) \bigg) \cdots \bigg) \bigg) \bigg) \bigg).
\end{split}
\end{align*}
We can now recursively optimize these quantities. First fix $y_1, \ldots, y_{j+1}$. To optimize the last part of the above expression of $S(\Omega)$, we use Lemma \ref{convexe} and we obtain
$$
\frac{\theta_{(x_1,y_1) y_2 \cdots y_{j+1}}}{\theta_{y_1 \cdots y_{j+1}}} = \frac{m_2^{\frac{S \left( \Omega_{(x_1,y_1) y_2 \cdots y_{j+1}}\right)}{\gamma (q-1)}}}{\sum_{x'_1} m_2^{\frac{S \left( \Omega_{(x'_1,y_1) y_2 \cdots y_{j+1}} \right)}{\gamma(q-1)}}}
$$
and

\begin{align*}
\begin{split}
-\sum_{x_1} \frac{\theta_{(x_1,y_1) y_2 \cdots y_{j+1}}}{\theta_{y_1 \cdots y_{j+1}}} \log_{m_2} \left(\frac{\theta_{(x_1,y_1) y_2 \cdots y_{j+1}}}{\theta_{y_1 \cdots y_{j+1}}} \right) &+ \frac{1}{\gamma (q-1)} \sum_{x_1} \frac{\theta_{(x_1,y_1) y_2 \cdots y_{j+1}}}{\theta_{y_1 \cdots y_{j+1}}} S \left( \Omega_{(x_1,y_1) y_2 \cdots y_{j+1}} \right) \\ &= \log_{m_2} \left(\sum_{x_1} m_2^{\frac{S \left( \Omega_{(x_1,y_1) y_2 \cdots y_{j+1}} \right)}{\gamma(q-1)}} \right).
\end{split}
\end{align*}
Using again Lemma \ref{convexe}, we get $\frac{\theta_{y_1 \cdots y_{j+1}}}{\theta_{y_1 \cdots y_j}}$, and so on. This gives us the weights $\theta_{(x_1,y_1) y_2 \cdots y_{j+1}}$, which are equal to 
\begin{align*}
\begin{split}
&\frac{z_{(x_1,y_1) y_2 \cdots y_{j+1}} \left(\sum_{x'_1} z_{(x'_1,y_1) y_2 \cdots y_{j+1}} \right)^{q^j \gamma-1}}{z_\varnothing} \\&\boldsymbol{\cdot} \prod_{p=0}^{j-1} \bigg(\sum_{y'_{j+1-p}} \bigg( \sum_{y'_{j+2-p}} \bigg( \cdots \bigg(\sum_{y'_{j+1}} \bigg(\sum_{x'_1} z_{(x'_1,y_1) y_2 \cdots y_{j-p} y'_{j+1-p} \cdots y'_{j+1}} \bigg)^{q^j \gamma} \bigg)^{\frac{1}{q}} \cdots \bigg)^{\frac{1}{q}} \bigg)^{\frac{1}{q}} \bigg)^{\frac{1-q}{q}},
\end{split}
\end{align*}
where $z_{(x_1,y_1) y_2 \cdots y_{j+1}} = m_2^{\frac{S \left(\Omega_{(x_1,y_1) y_2 \cdots y_{j+1}}\right)}{\gamma (q-1)}}$ and $z_\varnothing = m_2^{\frac{q S(\Omega)}{q-1}}$. In particular we get
$$
z_\varnothing = \sum_{y'_1} \bigg( \sum_{y'_2} \bigg( \cdots \bigg( \sum_{y'_{j+1}} \bigg( \sum_{x'_1} z_{(x'_1,y'_1) y'_2 \cdots y'_{j+1}} \bigg)^{q^j \gamma} \bigg)^{\frac{1}{q}} \cdots \bigg)^\frac{1}{q} \bigg)^\frac{1}{q}.
$$
Now let us consider $\Omega_u$ for fixed $u = (x_1,y_1) y_2 \cdots y_{j+1} \in \text{Pref}_{1,j}(\Omega)$. The optimization problem is now analogous on this tree, but simpler : we now have to optimize the quantity
\begin{align*}
\begin{split}
\frac{(q-1)(1-q^j \gamma)}{q^{j+1}} &H^{\mu_{(x_1,y_1)y_2 \cdots y_{j+1}}}_{m_2} \left(\alpha^1_{j+1} \left(\Omega_{(x_1,y_1)y_2 \cdots y_{j+1}} \right) \right) \\&+ \frac{\gamma (q-1)}{q} H^{\mu_{(x_1,y_1)y_2 \cdots y_{j+1}}}_{m_2} \left(\alpha^2_1 \lor \alpha^1_{j+1} \left(\Omega_{(x_1,y_1)y_2 \cdots y_{j+1}} \right) \right) \\&+ \frac{1}{q} \sum_{x_2,y_{j+2}} \frac{\theta_{(x_1,y_1)(x_2,y_2) y_3 \cdots y_{j+2}}}{\theta_{(x_1,y_1)y_2 \cdots y_{j+1}}} S \left(\Omega_{(x_1,y_1)(x_2,y_2) y_3 \cdots y_{j+2}} \right), 
\end{split}
\end{align*}
which is after factorization

\begin{align*}
\begin{split}
 \frac{q-1}{q^{j+1}} \bigg(  &-\sum_{y_{j+2}} \frac{\theta_{(x_1,y_1) y_2 \cdots y_{j+2}}}{\theta_{(x_1,y_1) y_2 \cdots y_{j+1}}} \log_{m_2}\left(\frac{\theta_{(x_1,y_1) y_2 \cdots y_{j+2}}}{\theta_{(x_1,y_1) y_2 \cdots y_{j+1}}}\right) \\ &+ q^j \gamma \sum_{y_{j+2}} \frac{\theta_{(x_1,y_1) y_2 \cdots y_{j+2}}}{\theta_{(x_1,y_1) y_2 \cdots y_{j+1}}} \bigg(-\sum_{x_2} \frac{\theta_{(x_1,y_1)(x_2,y_2) y_3 \cdots y_{j+2}}}{\theta_{(x_1,y_1) y_2 \cdots y_{j+2}}} \log_{m_2} \bigg(\frac{\theta_{(x_1,y_1)(x_2,y_2) y_3 \cdots y_{j+2}}}{\theta_{(x_1,y_1) y_2 \cdots y_{j+2}}} \bigg) \\ &+ \frac{1}{\gamma (q-1)} \sum_{x_2} \frac{\theta_{(x_1,y_1)(x_2,y_2) y_3 \cdots y_{j+2}}}{\theta_{(x_1,y_1) y_2 \cdots y_{j+2}}} S \left(\Omega_{(x_1,y_1)(x_2,y_2) y_3 \cdots y_{j+2}} \right) \bigg) \bigg).
\end{split}
\end{align*}
This gives the weights $$\frac{\theta_{(x_1,y_1)(x_2,y_2) y_3 \cdots y_{j+2}}}{\theta_{(x_1,y_1) y_2 \cdots y_{j+1}}} = \frac{z_{(x_1,y_1)(x_2,y_2) y_3 \cdots y_{j+2}} \left(\sum_{x'_2} z_{(x_1,y_1)(x'_2,y_2) y_3 \cdots y_{j+2}} \right)^{q^j \gamma-1}}{z_{(x_1,y_1) y_2 \cdots y_{j+1}}^{q^{j+1} \gamma}},$$ with $z_{(x_1,y_1)(x_2,y_2) y_3 \cdots y_{j+2}} = m_2^{\frac{S \left(\Omega_{(x_1,y_1)(x_2,y_2) y_3 \cdots y_{j+2}}\right)}{\gamma (q-1)}}$, $z_{(x_1,y_1) y_2 \cdots y_{j+1}} = m_2^{\frac{S \left(\Omega_{(x_1,y_1) y_2 \cdots y_{j+1}}\right)}{\gamma (q-1)}}$ and
$$
z_{(x_1,y_1) y_2 \cdots y_{j+1}}^{q^{j+1} \gamma} = \sum_{y'_{j+2}} \left( \sum_{x'_{2}} z_{(x_1,y_1) (x'_2,y_2) y_3 \cdots y'_{j+2}} \right)^{q^j \gamma}.
$$
This is exactly equation \eqref{solution} at the root of the graph $\Gamma_{u,j}(\Omega)$. The problem being the same at each vertex for $\Gamma_{u,j}(\Omega)$, for all $u \in \text{Pref}_{1,j}(\Omega)$, we can repeat the argument for the entire graphs. We also get the given formula for the optimal measure from the form of all optimal probability vectors that we found. The solutions $z = z(u)$ of the systems \eqref{solution} which we get this way are in $\left[1,m_2^{\frac{2}{\gamma(q-1)}} \right]^{V_{u,j}(\Omega)}$, thus we have $z(u) = t(u)$ for all $u$ (indeed for all $k \geq 1$, for all $v \in \text{Pref}_{k,j}(\Omega)$, for all $\mu$ on $\Omega_v$ we have $\dim_H(\mathbb{P}_\mu) \leq 2$, so $S(\Omega_v) \leq 2$).\newline

\end{proof}

\subsection{Upper bound for $\dim_H(X_\Omega)$} \label{24}

\begin{theorem}
Let $\mu$ be the Borel probability measure on $\Omega$ defined in the last theorem, and let $\mathbb{P}_\mu$ be the corresponding Borel probability measure on $X_\Omega$. Let $(x,y) \in X_\Omega$. Then 
$$
\liminf_{n \rightarrow \infty} \frac{-\log_{m_2}(\mathbb{P}_\mu(B_n(x,y)))}{L(n)} \leq \frac{q-1}{q} \log_{m_2}(t_\varnothing),
$$
from which we deduce that $\dim_H(X_\Omega)=\frac{q-1}{q} \log_{m_2}(t_\varnothing)$.
\end{theorem}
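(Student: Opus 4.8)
Proposition \ref{mesure} already provides the lower bound $\dim_H(X_\Omega)\ge\frac{q-1}{q}\log_{m_2}(t_\varnothing)$, so the plan is to prove the displayed $\liminf$ inequality at \emph{every} point $(x,y)\in X_\Omega$ and then read off the matching upper bound from Theorem \ref{calme}. Since $L(n)/n\to\gamma^{-1}=\log(m_1)/\log(m_2)$, the ratio of the two sequences $-\log_{m_1}(\mathbb{P}_\mu(B_n(x,y)))/n$ and $-\log_{m_2}(\mathbb{P}_\mu(B_n(x,y)))/L(n)$ equals $\gamma\,L(n)/n\to 1$, so they share the same $\liminf$; applying the third item of Theorem \ref{calme} with $A=X_\Omega$ and the probability measure $\mathbb{P}_\mu$ then yields $\dim_H(X_\Omega)\le\frac{q-1}{q}\log_{m_2}(t_\varnothing)$, hence the asserted equality. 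Everything therefore reduces to the $\liminf$ estimate, which must hold pointwise on $X_\Omega$, not merely almost everywhere.

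Fix $(x,y)\in X_\Omega$. As in the proof of Theorem \ref{test2}, after summing over the free coordinates $x'_{n+1},\dots,x'_{L(n)}$, write $\mathbb{P}_\mu(B_n(x,y))$ as the product over the primitive residues $i\le L(n)$ (those with $q\nmid i$) of the $\mu$-masses of the generalized cylinders carried by the fibers $J_i$ — the both-coordinate part of the $i$-th word reaching position $n$, its second-coordinate-only part reaching position $L(n)$ — but now keep these $\mu$-masses rather than replace their logarithms by expectations, inserting the explicit optimal weights of Proposition \ref{mesure}. Because $L(n)/n\to\gamma^{-1}\in[q^{\,j},q^{\,j+1})$, each fiber has at most $j+1$ coordinates in the boundary region $\{n+1,\dots,L(n)\}$, so each generalized cylinder is, up to a sum over at most $m_1m_2^{\,j}$ genuine vertices of $\Gamma_j(\Omega)$, comparable to $\mu$ of a vertex of $\Gamma_j(\Omega)$; and $\mu$ of a vertex, after telescoping the edge conditionals $t_w\bigl(\sum_{x'}t_{w'}\bigr)^{q^j\gamma-1}\big/\,t_v^{\,q^{j+1}\gamma}$ along its branch by means of the defining relation \eqref{solution}, is an explicit product of $t_\varnothing$ (exactly once, in the denominator), of interior $t$-values raised to $1-q^{j+1}\gamma$, of sums of at most $m_1$ $t$-values raised to $q^j\gamma-1$, and of a bounded number of further uniformly bounded factors.

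Multiplying over all fibers, $t_\varnothing$ occurs $\#\{i\le L(n):q\nmid i\}=\frac{q-1}{q}L(n)+O(1)$ times; using $1\le t_\bullet\le m_2^{2/(\gamma(q-1))}$ from Lemma \ref{arbre} and collecting the remaining factors according to the position to which they are attached, one should obtain an identity
\[
-\log_{m_2}\!\bigl(\mathbb{P}_\mu(B_n(x,y))\bigr)=\tfrac{q-1}{q}\,L(n)\,\log_{m_2}(t_\varnothing)+R(n)+O(1),
\]
in which $R(n)$ is a fixed linear combination, with coefficients depending only on $q,\gamma,j$, of partial sums $\sum_{\bullet\le k}\log_{m_2}(t_\bullet)$ (of $t$-values and of bounded auxiliary sums of $t$-values) taken at the nested scales $k\in\{n,\lfloor n/q\rfloor,L(n),\lfloor L(n)/q\rfloor,\dots\}$, the coefficients being balanced exactly so that, upon division by $L(n)$, $R(n)/L(n)$ becomes a difference of bounded averages of the form $\bigl(\sum_{\bullet\le k}\log_{m_2}(t_\bullet)\bigr)/k$ evaluated at scales differing by a factor $q$ — the two-dimensional analogue of the relation $-\Phi(n)+q\,\Phi(\lfloor n/q\rfloor)$ that governs the one-dimensional case in \cite{ref2}. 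Restricting $n$ to the geometric sequence $n=q^{a}$, $a\to\infty$, one has $\lfloor n/q\rfloor=q^{a-1}$ while $L(q^{a})$ and $\lfloor L(q^{a})/q\rfloor$ agree up to $O(1)$ with $L(q^{a})$ and $L(q^{a-1})$, so that along this sequence $R(q^{a})/L(q^{a})=\pm\bigl(\Psi(q^{a})-\Psi(q^{a-1})\bigr)+o(1)$ for a bounded sequence $\Psi$; hence $\liminf_{a}\bigl(\pm(\Psi(q^{a})-\Psi(q^{a-1}))\bigr)\le 0$, for otherwise $\Psi(q^{a})$ would tend to $+\infty$ or $-\infty$. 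Dividing the identity by $L(q^{a})$ and letting $a\to\infty$ gives $\liminf_n -\log_{m_2}(\mathbb{P}_\mu(B_n(x,y)))/L(n)\le\frac{q-1}{q}\log_{m_2}(t_\varnothing)$, which finishes the proof.

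The crux is the identity of the third paragraph: one must follow precisely, for the optimal measure, how each fiber's generalized cylinder unfolds along $\Gamma_j(\Omega)$ — in particular the interaction between its both-coordinate portion (up to $n$) and its short second-coordinate-only portion (up to $L(n)$) — and check that, after applying \eqref{solution}, everything beyond the $\frac{q-1}{q}L(n)$ copies of $t_\varnothing$ reassembles into balanced partial sums to which the elementary ``telescoping along a geometric subsequence of a bounded sequence'' argument applies. This is exactly where the parameter $j$ and the geometry of the generalized cylinders genuinely intervene, and it is the two-dimensional counterpart of the corresponding computation in \cite{ref2}.
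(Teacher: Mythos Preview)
Your overall strategy matches the paper's: expand $\mathbb{P}_\mu(B_n(x,y))$ using the explicit optimal weights, isolate the $\frac{q-1}{q}L(n)\log_{m_2}(t_\varnothing)$ main term, and show the remainder has $\liminf\le 0$ after division by $L(n)$. The paper does exactly this, introducing $j+2$ bounded Ces\`aro averages $u^1_n,\dots,u^{j+2}_n$ (with $u^k_{n+1}-u^k_n\to 0$) and expressing the remainder as a sum of differences $u^k_{\phi_k(n)}-u^k_{\psi_k(n)}$ at scales $\phi_k(n),\psi_k(n)$ drawn from $\{n,\ \lfloor L(n)/q^r\rfloor : r\ge 0\}$.

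The gap is in your final step. You claim that along $n=q^a$ the remainder collapses to $\pm(\Psi(q^a)-\Psi(q^{a-1}))$ for a single bounded $\Psi$, whence the elementary $\liminf\le 0$. But the scales appearing do \emph{not} all differ by a factor $q$: for instance $u^1$ is evaluated at $\lfloor L(n)/q^{j+1}\rfloor$ and at $n$, whose ratio tends to $\gamma^{-1}q^{-(j+1)}\in[q^{-1},1)$, while $u^2$ is evaluated at $n$ and at $\lfloor L(n)/q^{j}\rfloor$, ratio $\gamma q^{j}\in(q^{-1},1]$. Since $\gamma=\log m_2/\log m_1$ is typically irrational, restricting to $n=q^a$ does not produce a telescoping sum of consecutive values of any single sequence; the indices $\lfloor L(q^a)/q^{j+1}\rfloor$ are not of the form $q^{a-1}$. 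The one-dimensional case in \cite{ref2} has no parameter $\gamma$, which is why a pure power-of-$q$ telescope suffices there; here it does not. What is actually needed is the lemma (Lemma~\ref{combi} in the paper, relying on \cite[Lemma~4.1]{ref10} or \cite[Lemma~5.4]{ref5}) that for finitely many bounded sequences $(u^k)$ with $u^k_{n+1}-u^k_n\to 0$ and arbitrary positive constants $r_k,c_k$, one has $\liminf_n\sum_k\bigl(u^k_{\lceil r_k n\rceil}-u^k_{\lceil c_k n\rceil}\bigr)\le 0$. This is precisely what handles the incommensurate scales, and your telescoping shortcut does not substitute for it.
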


\begin{proof}

Recall that

$$
-\log_{m_2}(\mathbb{P}_\mu(B_n(x,y))) = - \sum_{\substack{i,\ q \nmid i \\ i \leq L(n)}} \log_{m_2} \left( \mu \left(\left[(x_i,y_i) (x_{q i},y_{q i}) \cdots (x_{q^{k-1} i},y_{q^{k-1} i}) y_{q^{k} i} \cdots y_{q^\ell i} \right] \right) \right),
$$
where $k$ and $\ell$ are determined by $i < qi < \cdots < q^{k-1} i \leq n < q^{k} i < \dots < q^\ell i \leq L(n) < q^{\ell+1} i$ in each term of the sum.

Suppose first that $j=1$ for the sake of simplicity. We have
\begin{align*}
\begin{split}
\mu \left(\left[(x_1,y_1) \cdots (x_k,y_k) y_{k+1}\right] \right) &= \frac{t_{(x_1,y_1) y_2} \left(\sum_{x'_1} t_{(x'_1,y_1) y_2} \right)^{q \gamma-1} \left(\sum_{y'_2} \left( \sum_{x'_1} t_{(x'_1,y_1) y'_2} \right)^{q \gamma} \right)^{\frac{1-q}{q}}}{t_\varnothing} \\ &\boldsymbol{\cdot} \prod_{p=2}^k \frac{t_{(x_1,y_1) \cdots (x_p,y_p) y_{p+1}} \left(\sum_{x'_p} t_{(x_1,y_1) \cdots (x'_p,y_p) y_{p+1}} \right)^{q \gamma-1}}{t_{(x_1,y_1) \cdots (x_{p-1},y_{p-1}) y_p}^{q^2 \gamma}},
\end{split}
\end{align*}

\begin{align*}
\begin{split}
\mu \left(\left[(x_1,y_1) \cdots (x_{k-1},y_{k-1}) y_k y_{k+1}\right] \right) &= \frac{t_{(x_1,y_1) y_2} \left(\sum_{x'_1} t_{(x'_1,y_1) y_2} \right)^{q \gamma-1} \left(\sum_{y'_2} \left( \sum_{x'_1} t_{(x'_1,y_1) y'_2} \right)^{q \gamma} \right)^{\frac{1-q}{q}}}{t_\varnothing} \\ &\boldsymbol{\cdot} \prod_{p=2}^{k-1} \frac{t_{(x_1,y_1) \cdots (x_p,y_p) y_{p+1}} \left(\sum_{x'_p} t_{(x_1,y_1) \cdots (x'_p,y_p) y_{p+1}} \right)^{q \gamma-1}}{t_{(x_1,y_1) \cdots (x_{p-1},y_{p-1}) y_p}^{q^2 \gamma}} \\ &\boldsymbol{\cdot} \frac{\left(\sum_{x'_k} t_{(x_1,y_1) \cdots (x'_k,y_k) y_{k+1}} \right)^{q \gamma}}{t^{q^2 \gamma}_{(x_1,y_1) \cdots (x_{k-1},y_{k-1}) y_k}}
\end{split}
\end{align*}
for $k \geq 2$,

$$
\mu \left(\left[y_1 y_2 \right] \right) = \frac{ \left(\sum_{x'_1} t_{(x'_1,y_1) y_2} \right)^{q \gamma} \left(\sum_{y'_2} \left( \sum_{x'_1} t_{(x'_1,y_1) y'_2} \right)^{q \gamma} \right)^{\frac{1-q}{q}}}{t_\varnothing},
$$
and

$$
\mu \left(\left[y_1 \right] \right) = \frac{ \left(\sum_{y'_2} \left( \sum_{x'_1} t_{(x'_1,y_1) y'_2} \right)^{q \gamma} \right)^{\frac{1}{q}}}{t_\varnothing}.
$$
For each positive integer $\kappa \leq L(n)$, we can write $\kappa = q^r i$ with $q \nmid i$ for some unique $(r,i)$. Now, developing the product $\mathbb{P}_\mu(B_n(x,y))$, we pick up
\begin{itemize}
\item $\frac{1}{t_\varnothing}$ for each $i \leq L(n)$ such that $q \nmid i$,
\item $t_{(x_i,y_i) \cdots (x_{q^r i},y_{q^r i}) y_{q^{r+1} i}}$ for each $\kappa = q^r i \leq n$,
\item $\frac{1}{t_{(x_i,y_i) \cdots \left(x_{q^r i},y_{q^r i}\right) y_{q^{r+1} i}}^{q^2 \gamma}}$ for each $\kappa \leq \left\lfloor \frac{L(n)}{q^2} \right\rfloor$ : that is because for these $\kappa$ we have $q^2 \kappa = q^{r+2} i \leq L(n)$, and for $\kappa > \left\lfloor\frac{L(n)}{q^2} \right\rfloor$ we have $q^2 \kappa \geq q^2 \left\lfloor \frac{L(n)}{q^2} \right\rfloor + q^2 > L(n)$,
\item $\left(\sum_{x'_{q^r i}} t_{(x_i,y_i) \cdots (x'_{q^r i},y_{q^r i}) y_{q^{r+1} i}} \right)^{q\gamma - 1}$ for each $\kappa \leq n$,
\item $\left(\sum_{x'_{q^r i}} t_{(x_i,y_i) \cdots (x'_{q^r i},y_{q^r i}) y_{q^{r+1} i}} \right)^{q\gamma}$ for each $n < \kappa \leq \left\lfloor \frac{L(n)}{q} \right\rfloor$,
\item $\left(\sum_{y'_{qi}} \left( \sum_{x'_i} t_{(x'_i,y_i) y'_{qi}} \right)^{q \gamma} \right)^{\frac{1-q}{q}}$ for each $i \leq \left\lfloor \frac{L(n)}{q} \right\rfloor$ such that $q \nmid i$,
\item $\left(\sum_{y'_{qi}} \left( \sum_{x'_i} t_{(x'_i,y_i) y'_{qi}} \right)^{q \gamma} \right)^{\frac{1}{q}}$ for each $\left\lfloor \frac{L(n)}{q} \right\rfloor < i \leq L(n)$ such that $q \nmid i$.
\end{itemize}
Thus if we define 
 
$$R(\kappa) = \log_{m_2} \left(t_{(x_i,y_i) (x_{qi},y_{qi}) \cdots (x_{q^r i},y_{q^r i}) y_{q^{r+1} i}} \right)$$ for $\kappa = q^r i$ with $q \nmid i$, $$ \tilde{R}(\kappa) = \log_{m_2} \left(\sum_{x'_{q^r i}} t_{(x_i,y_i) (x_{qi},y_{qi}) \cdots (x'_{q^r i},y_{q^r i}) y_{q^{r+1} i}} \right)$$ for $\kappa = q^r i$ with $q \nmid i$, and
$$
u^1_n = \frac{1}{n} \sum_{\kappa=1}^n R(\kappa), \hspace{1cm} u^2_n = \frac{1}{n} \sum_{\kappa=1}^n \tilde{R}(\kappa),
$$

$$
u^3_n = \frac{1}{n} \sum_{i \leq n, \ q \nmid i} \log_{m_2} \bigg(\sum_{y'_{qi}} \bigg(\sum_{x'_i} t_{(x'_i,y_i) y'_{qi}} \bigg)^{q \gamma} \bigg),
$$
we get
\begin{align*}
\begin{split}
\log_{m_2} \left(\mathbb{P}_\mu(B_n(x,y)) \right) &= n u^1_n - \gamma q^2 \left\lfloor \frac{L(n)}{q^2} \right\rfloor u^1_{\left\lfloor \frac{L(n)}{q^2} \right\rfloor} + \gamma q \left \lfloor \frac{L(n)}{q} \right\rfloor u^2_{\left\lfloor \frac{L(n)}{q} \right\rfloor} - n u^2_n \\ &+ \frac{1}{q} L(n) u^3_{L(n)} - \left\lfloor \frac{L(n)}{q} \right\rfloor u^3_{\left\lfloor \frac{L(n)}{q} \right\rfloor} - \# \{i \in \llbracket 1, L(n) \rrbracket, \ q \nmid i \} \log_{m_2}(t_\varnothing).
\end{split}
\end{align*}
Getting back to the general case, let us define $j+2$ sequences as follows. At first, set

$$
u^1_n = \frac{1}{n} \sum_{\kappa=1}^n R(\kappa) \hspace{1cm} u^2_n = \frac{1}{n} \sum_{\kappa=1}^n \tilde{R}(\kappa),
$$
where $$R(\kappa) = \log_{m_2} \left(t_{(x_i,y_i) (x_{qi},y_{qi}) \cdots (x_{q^r i},y_{q^r i}) y_{q^{r+1} i} \cdots y_{q^{r+j} i}} \right)$$ if $\kappa = q^r i$ with $q \nmid i$, and $$ \tilde{R}(\kappa) = \log_{m_2} \left(\sum_{x'_{q^r i}} t_{(x_i,y_i) (x_{qi},y_{qi}) \cdots (x'_{q^r i},y_{q^r i}) y_{q^{r+1} i} \cdots y_{q^{r+j} i}} \right)$$ if $\kappa = q^r i$ with $q \nmid i$. Then, for $3 \leq k \leq j+2$ let
$$
u^k_n = \frac{1}{n} \sum_{i \leq n, \ q \nmid i} \log_{m_2} \bigg(\sum_{y'_{q^{j+3-k} i}} \bigg( \sum_{y'_{q^{j+4-k} i}} \bigg( \cdots \bigg(\sum_{y'_{q^j i}} \bigg(\sum_{x'_i} t_{(x'_i,y_i) y_{qi} \cdots y'_{q^{j} i}} \bigg)^{q^j \gamma} \bigg)^{\frac{1}{q}} \cdots \bigg)^{\frac{1}{q}} \bigg)^{\frac{1}{q}} \bigg),
$$
where there are exactly $k-1$ sums and $k-3$ exponents $\frac{1}{q}$ in each $\log_{m_2}$ term. It is easy to see that all these sequences are nonnegative, bounded, with
$$
\forall 1 \leq k \leq j+2, \ \lim_{n \rightarrow \infty} u^k_{n+1} - u^k_n = 0.
$$
Let $\epsilon > 0$. Using the definition of $\mu$ we can get the following expression for $n$ large enough, which will be justified when studying the case $d \geq 2$
\begin{align*}
\begin{split}
\frac{-\log_{m_2}(\mathbb{P}_\mu(B_n(x,y)))}{L(n)} &= \gamma \frac{q^{j+1}}{L(n)} \left\lfloor \frac{L(n)}{q^{j+1}} \right\rfloor u^1_{\left\lfloor \frac{L(n)}{q^{j+1}} \right\rfloor} - \frac{n}{L(n)} u^1_n + \frac{n}{L(n)} u^2_n - \gamma \frac{q^j}{L(n)} \left\lfloor \frac{L(n)}{q^j} \right\rfloor u^2_{\left\lfloor \frac{L(n)}{q^j} \right\rfloor}\\ &+ \frac{1}{L(n)} \sum_{k=0}^{j-1} \left( \left\lfloor \frac{L(n)}{q^{j-k}} \right\rfloor u^{k+3}_{\left\lfloor \frac{L(n)}{q^{j-k}} \right\rfloor} - \frac{1}{q} \left\lfloor \frac{L(n)}{q^{j-k-1}} \right\rfloor u^{k+3}_{\left\lfloor \frac{L(n)}{q^{j-k-1}} \right\rfloor} \right)  \\ &+ \frac{ \# \{i \in \llbracket 1, L(n) \rrbracket, \ q \nmid i \}}{L(n)} \log_{m_2}(t_\varnothing) \\ &\leq \gamma \left(u^1_{\left\lfloor \frac{L(n)}{q^{j+1}} \right\rfloor} - u^1_n \right) + \gamma \left(u^2_n - u^2_{\left\lfloor \frac{L(n)}{q^{j}} \right\rfloor} \right) \\&+ \sum_{k=0}^{j-1} \frac{1}{q^{j-k}} \left(u^{k+3}_{\left \lfloor \frac{L(n)}{q^{j-k}} \right \rfloor} - u^{k+3}_{\left \lfloor \frac{L(n)}{q^{j-k-1}} \right \rfloor} \right) \\ &+ \frac{q-1}{q} \log_{m_2}(t_\varnothing) + \epsilon.
\end{split}
\end{align*}
To conclude we now use Lemma \ref{combi} and then let $\epsilon \rightarrow 0$.

\end{proof}

\begin{example}
If $\Omega$ is a Sierpi\'nski carpet, then clearly $X_\Omega = \Omega$. Using uniqueness in Theorem \ref{arbre} we deduce that the values $t_{(x_1,y_1) y_2 \dots y_{j+1}}$ do not depend on $x_1$ and $y_1$. We call them $t_{y_2 \cdots y_{j+1}}$. Equation \eqref{solution} now reduces to
$$
t_{y_2 \cdots y_{j+1}}^{q^{j+1} \gamma} = N(y_2)^{q^j \gamma} \sum_{y_{j+2}} t_{y_3 \cdots y_{j+2}}^{q^j \gamma},
$$
where $N(y_2) = \# \{x_2, \ (x_2,y_2) \in A\}$. Thus
$$
\sum_{y_{j+1}} t_{y_2 \cdots y_{j+1}}^{q^j \gamma} = N(y_2)^{q^{j-1} \gamma} \sum_{y_{j+1}} \bigg( \sum_{y_{j+2}} t_{y_3 \cdots y_{j+2}}^{q^j \gamma} \bigg)^\frac{1}{q},
$$
and so on. After having summed on the different coordinates we get
$$
\sum_{y_2} \bigg( \sum_{y_3} \bigg( \cdots \bigg( \sum_{y_j} \bigg( \sum_{y_{j+1}} t_{y_2 \cdots y_{j+1}}^{q^j \gamma} \bigg)^{\frac{1}{q}} \bigg)^{\frac{1}{q}} \cdots \bigg)^{\frac{1}{q}} \bigg)^{\frac{1}{q}} = \left( \sum_{y_2} N(y_2)^\gamma \right)^{\frac{q}{q-1}}.
$$
So finally $t_\varnothing = \left( \sum_{y_2} N(y_2)^\gamma \right)^{\frac{q}{q-1}}$ and $\dim_H(X_\Omega) = \log_{m_2} \left( \sum_{y_2} N(y_2)^\gamma \right)$, which is as expected in the McMullen formula. Also, we check that the maximizing measure is the Bernoulli product measure used by McMullen. \newline
\end{example} 

\begin{example}
Let $q=2$, $m_1=3$, $m_2=2$ and $D = \{(0,0),(0,1),(1,0),(1,1),(2,0),(2,1)\}$. We have $j=0$. Let 
\[A =
 \begin{pmatrix}
  0 & 1 & 1 & 1 & 1 & 1 \\
  0 & 1 & 1 & 1 & 1 & 1 \\
  0 & 1 & 1 & 1 & 1 & 1 \\
	1 & 1 & 1 & 0 & 1 & 0 \\
	0 & 1 & 1 & 1 & 1 & 1 \\
	1 & 1 & 1 & 0 & 1 & 0 \\
 \end{pmatrix}.
\]
be a $0-1$ matrix indexed by $D \times D$. Let 
$$
X_A = \{(x_k,y_k)_{k=1}^\infty \in \Sigma_{3,2}, \ A((x_k,y_k),(x_{2k},y_{2k}))=1, \ k \geq 1 \}.
$$ 
We look for the solutions $t$ of the systems of equations described in Lemma \ref{arbre}. Using uniqueness we know that
$$
t_{(0,0)} = t_{(0,1)} = t_{(1,0)} = t_{(2,0)}, \ \ \ \ t_{(1,1)} = t_{(2,1)}.
$$
Moreover

$$
t_{(0,0)}^{\gamma q} = \left(t_{(1,0)} + t_{(2,0)}\right)^\gamma + \left(t_{(0,1)} + t_{(1,1)} + t_{(2,1)}\right)^\gamma = 2^\gamma t_{(0,0)}^\gamma + \left(t_{(0,0)} + 2 t_{(1,1)} \right)^\gamma ,
$$

$$
t_{(1,1)}^{\gamma q} = \left(t_{(0,0)} + t_{(1,0)} + t_{(2,0)}\right)^\gamma + t_{(0,1)}^\gamma = (3^\gamma + 1) t_{(0,0)}^\gamma,
$$
thus $t_{(0,0)}^{\gamma q} =  2^\gamma t_{(0,0)}^\gamma + \left(t_{(0,0)} + 2 \left(3^\gamma+1\right)^{\frac{1}{\gamma q}} t_{(0,0)}^{\frac{1}{q}} \right)^\gamma$. Finally we have
$$
t_\varnothing = \left(t_{(1,0)} + t_{(1,0)} + t_{(2,0)}\right)^\gamma + \left(t_{(0,1)} + t_{(1,1)} + t_{(2,1)}\right)^\gamma = 3^\gamma t_{(0,0)}^\gamma + \left(t_{(0,0)} + 2 \left(3^\gamma+1\right)^{\frac{1}{\gamma q}} t_{(0,0)}^{\frac{1}{q}} \right)^\gamma.
$$
Using \textit{Scilab} we get $t_{(0,0)} \simeq 7.1446$, thus $\dim_H(X_A) = \frac{1}{2} \log_2(t_\varnothing) \simeq 1.878$. \newline 
\end{example}

\subsection{The Minkowski dimension of $X_\Omega$} \label{25}

\begin{theorem} \label{test} We have  
\begin{align*}
\begin{split}
\dim_M(X_\Omega) = (q-1)^2 \sum_{p=1}^j \frac{\log_{m_2}(|\textup{Pref}_{0,p}(\Omega)|)}{q^{p+1}} &+ (q-1)(1-q^j \gamma) \sum_{p=j+1}^{\infty} \frac{\log_{m_2}(|\textup{Pref}_{p-j-1,j+1}(\Omega)|)}{q^p}\\ &+ (q-1)(q^{j+1} \gamma-1) \sum_{p=j+1}^{\infty} \frac{\log_{m_2}(|\textup{Pref}_{p-j,j}(\Omega)|)}{q^{p+1}} 
\end{split}
\end{align*}

\end{theorem}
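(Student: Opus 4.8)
\emph{Approach.} The plan is to read off the box dimension directly from the combinatorics of generalized prefixes. Since the ball $B_n(x,y)$ of radius $m_1^{-n}$ is exactly the generalized cylinder $[(x_1,y_1)\cdots(x_n,y_n)y_{n+1}\cdots y_{L(n)}]$, and the family of these balls forms a partition of $\Sigma_{m_1,m_2}$ into sets of diameter $m_1^{-n}$, the standard mesh characterization of box dimension gives, whenever the limit exists, $\dim_M(X_\Omega)=\lim_n\frac1n\log_{m_1}N_n=\gamma\lim_n\frac1n\log_{m_2}N_n$, where $N_n$ is the number of words $u=(x_1,y_1)\cdots(x_n,y_n)y_{n+1}\cdots y_{L(n)}$ with $[u]\cap X_\Omega\neq\varnothing$. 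So the whole task reduces to computing $\lim_n\frac1n\log_{m_2}N_n$.

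First I would factor $N_n$ over the orbits $\{i,qi,q^2i,\dots\}$, $q\nmid i$. These orbits are pairwise disjoint and every integer $\le L(n)$ lies in exactly one of them; membership of a point in $X_\Omega$ is a conjunction of one condition per orbit; and, using $\Omega=\bigcap_k\bigcup_{v\in\textup{Pref}_{k,0}(\Omega)}[v]$, any family of orbit-wise admissible prefixes can be glued into a genuine point of $X_\Omega$. Hence $N_n=\prod_{i\le L(n),\ q\nmid i}\bigl|\textup{Pref}_{a(i),\,b(i)-a(i)}(\Omega)\bigr|$, where $a(i)=\#\{\ell\ge0:q^\ell i\le n\}$ is the number of visible $x$-coordinates along the orbit of $i$ and $b(i)=\#\{\ell\ge0:q^\ell i\le L(n)\}\ge1$ the number of visible $y$-coordinates, so the visible data along that orbit is a generalized prefix consisting of $a(i)$ full pairs followed by $b(i)-a(i)$ lone $y$-letters.

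The heart of the proof is the classification of the orbit shapes $(a(i),b(i))$, and this is where the parameter $j$ enters through $q^jn\le L(n)<q^{j+1}n$ (valid for $n$ large). I would show: for $1\le b\le j$, every $i$ with $b(i)=b$ satisfies $i>n$, hence $a(i)=0$ and the shape is $(0,b)$; for $b\ge j+1$, the set $\{i:b(i)=b\}=(L(n)q^{-b},\,L(n)q^{-b+1}]$ splits at the threshold $n\,q^{-(b-j-1)}$ into a left part with $a(i)=b-j$ (prefix in $\textup{Pref}_{b-j,j}(\Omega)$) and a right part with $a(i)=b-j-1$ (prefix in $\textup{Pref}_{b-j-1,j+1}(\Omega)$); the boundary cases $b=j$ and $b=j+1$ are checked by hand using $L(n)/q^j\ge n>L(n)/q^{j+1}$. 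Taking logarithms in the product for $N_n$, counting each of the three families with the elementary estimate $\#\{i\in(a,b]:q\nmid i\}=\frac{q-1}{q}(b-a)+O(1)$ together with $L(n)=n/\gamma+O(1)$, and re-indexing by $p=b$, the three families yield, after multiplying by $\gamma$, precisely the three sums in the statement with coefficients $(q-1)^2$, $(q-1)(q^{j+1}\gamma-1)$ and $(q-1)(1-q^j\gamma)$.

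Two routine points finish the argument. The accumulated $O(1)$ errors total $O((\log n)^2)=o(n)$; and since $|\textup{Pref}_{a,\delta}(\Omega)|\le(m_1m_2)^{a+\delta}$ with $a+\delta=b$, the contribution of all orbits with $b(i)>P$ is $O\bigl(n\sum_{b>P}b\,q^{-b}\bigr)$, which is $<\epsilon n$ once $P$ is large; letting $n\to\infty$ and then $P\to\infty$ gives the claimed limit, in particular its existence, so that $\overline{\dim}_M(X_\Omega)=\underline{\dim}_M(X_\Omega)$. The main obstacle is the shape classification of the third paragraph: pinning down the breakpoints $L(n)q^{-b}$, $n\,q^{-(b-j-1)}$, $L(n)q^{-b+1}$ and the bookkeeping $p\mapsto\textup{Pref}_{p-j,j},\ \textup{Pref}_{p-j-1,j+1}$, especially at $b=j,\ j+1$ — this is the concrete form here of the more elaborate combinatorics involving $j$. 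Everything else parallels the one-dimensional Minkowski computation in \cite{ref2}.
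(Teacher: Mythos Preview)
Your proof is correct and follows essentially the same approach as the paper's: both factor the prefix count of $X_\Omega$ as a product over the multiplicative orbits $\{i,qi,q^2i,\dots\}$, classify orbit shapes via the partition $\bigl(L(n)/q^p,\,L(n)/q^{p-1}\bigr]$ with the split at $n/q^{p-j-1}$ for $p\ge j+1$, and control the tail contribution. The paper's version is terser because it reuses the interval decomposition already carried out in the proof of Theorem~\ref{test2} and works along the subsequence $n=q^\ell r$ with the residual bounded by $d_n$, whereas you write out the exact factorization $N_n=\prod_i|\textup{Pref}_{a(i),b(i)-a(i)}(\Omega)|$ and truncate at $b>P$; these are presentational rather than substantive differences.
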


\begin{proof}
Recall that, by definition
$$
\underline{\dim}_M(X_\Omega) = \liminf_{n\rightarrow \infty} \frac{\log_{m_1}(\text{Pref}_{n,L(n)-n}(X_\Omega))}{n}.
$$
We can again fix $\ell \geq j+1$ and take $n = q^\ell r$ with $r \rightarrow \infty$ in this $\liminf$.
Now using the computations used in the proof of Theorem \ref{test2} we get
\begin{align*}
\begin{split}
\log_{m_1}(\text{Pref}_{n,L(n)-n}(X_\Omega)) &\geq \sum_{p=1}^j \# \left\{i \in \left]\frac{L(n)}{q^{p}},\frac{L(n)}{q^{p-1}}\right] : q \nmid i \right\} \log_{m_1}(|\textup{Pref}_{0,p}(\Omega)|) \\& + \sum_{p=j+1}^{\ell} \# \left\{i \in \left]\frac{n}{q^{p-j-1}},\frac{L(n)}{q^{p-1}}\right] : q \nmid i \right\} \log_{m_1}(|\textup{Pref}_{p-j-1,j+1}(\Omega)|) \\& + \sum_{p=j+1}^{\ell} \# \left\{ i \in \left] \frac{L(n)}{q^p},\frac{n}{q^{p-j-1}} \right] : q \nmid i \right\} \log_{m_1}(|\textup{Pref}_{p-j,j}(\Omega)|).
\end{split}
\end{align*}
On the other hand

\begin{align*}
\begin{split}
\log_{m_1}(\text{Pref}_{n,L(n)-n}(X_\Omega)) &\leq \sum_{p=1}^j \# \left\{i \in \left]\frac{L(n)}{q^{p}},\frac{L(n)}{q^{p-1}}\right] : q \nmid i \right\} \log_{m_1}(|\textup{Pref}_{0,p}(\Omega)|) \\& + \sum_{p=j+1}^{\ell} \# \left\{i \in \left]\frac{n}{q^{p-j-1}},\frac{L(n)}{q^{p-1}}\right] : q \nmid i \right\} \log_{m_1}(|\textup{Pref}_{p-j-1,j+1}(\Omega)|) \\& + \sum_{p=j+1}^{\ell} \# \left\{ i \in \left] \frac{L(n)}{q^p},\frac{n}{q^{p-j-1}} \right] : q \nmid i \right\} \log_{m_1}(|\textup{Pref}_{p-j,j}(\Omega)|) \\ &+ \log_{m_1}(m_1 m_2) d_n
\end{split}
\end{align*}
by putting arbitrary digits in the remaining places ($d_n$ being defined in \eqref{nombre}). Remember that $d_n \leq \frac{(\ell+1) L(n)}{q^\ell}+ C \frac{\ell(\ell+1)}{2}$. By letting $r \rightarrow \infty$ we obtain

\begin{align*}
\begin{split}
\underline{\dim}_M(X_\Omega) \geq (q-1)^2 \sum_{p=1}^j \frac{\log_{m_2}(|\textup{Pref}_{0,p}(\Omega)|)}{q^{p+1}} &+ (q-1)(1-q^j \gamma) \sum_{p=j+1}^{\ell} \frac{\log_{m_2}(|\textup{Pref}_{p-j-1,j+1}(\Omega)|)}{q^p}\\ &+ (q-1)(q^{j+1} \gamma-1) \sum_{p=j+1}^{\ell} \frac{\log_{m_2}(|\textup{Pref}_{p-j,j}(\Omega)|)}{q^{p+1}}  
\end{split}
\end{align*}
and

\begin{align*} 
\begin{split}
\overline{\dim}_M(X_\Omega) \leq (q-1)^2 \sum_{p=1}^j \frac{\log_{m_2}(|\textup{Pref}_{0,p}(\Omega)|)}{q^{p+1}} &+ (q-1)(1-q^j \gamma) \sum_{p=j+1}^{\ell} \frac{\log_{m_2}(|\textup{Pref}_{p-j-1,j+1}(\Omega)|)}{q^p}\\ &+ (q-1)(q^{j+1} \gamma-1) \sum_{p=j+1}^{\ell} \frac{\log_{m_2}(|\textup{Pref}_{p-j,j}(\Omega)|)}{q^{p+1}}  \\ &+ \log_{m_2}(m_1 m_2) \frac{\ell+1}{q^\ell}. 
\end{split}
\end{align*}
Since $\ell$ is arbitrary we can conclude.\newline
\end{proof}

\begin{proposition}
We have $\dim_M(X_\Omega) = \dim_H(X_\Omega)$ if and only if the following four conditions are satisfied 
\begin{itemize}
\item the tree $\Gamma_{j}(\Omega)$ is spherically symmetric,
\item $\# \left\{x_1 : (x_1,y_1) y_2 \cdots y_{j+1} \in \textup{Pref}_{1,j}(\Omega) \right\}$ does not depend on $y_1 \cdots y_{j+1} \in \textup{Pref}_{0,j+1}(\Omega)$,
\item for $1 \leq p \leq j$, $\# \left\{y_{p+1} : y_1 \cdots y_{p+1} \in \textup{Pref}_{0,p+1}(\Omega) \right\}$ does not depend on $y_1 \cdots y_p \in \textup{Pref}_{0,p}(\Omega)$,
\item for $p \geq 2$, $\# \left\{x_p : (x_1,y_1) \cdots (x_p,y_p) y_{p+1} \cdots y_{p+j} \in \textup{Pref}_{p,j}(\Omega) \right\}$ does not depend on \\$(x_1,y_1) \cdots (x_{p-1},y_{p-1}) y_p \cdots y_{p+j} \in \textup{Pref}_{p-1,j}(\Omega)$.

\end{itemize}
\end{proposition}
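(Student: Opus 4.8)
The plan is to compare the Minkowski formula of Theorem~\ref{test} with the functional $S(\Omega,\mu)$ introduced in the proof of Proposition~\ref{mesure}, for which Theorem~\ref{test2}, Proposition~\ref{mesure} and the matching upper bound of Subsection~\ref{24} give $\dim_H(X_\Omega)=\max_\mu S(\Omega,\mu)$, the maximum being attained at the optimal measure $\mu_0$ of Proposition~\ref{mesure}. Since $\#\alpha^1_p=|\textup{Pref}_{0,p}(\Omega)|$, $\#(\alpha^2_{p-j-1}\lor\alpha^1_p)=|\textup{Pref}_{p-j-1,j+1}(\Omega)|$ and $\#(\alpha^2_{p-j}\lor\alpha^1_p)=|\textup{Pref}_{p-j,j}(\Omega)|$, the formula of Theorem~\ref{test} is exactly $S(\Omega,\mu)$ with every entropy $H^\mu_{m_2}(\mathcal P)$ replaced by $\log_{m_2}(\#\mathcal P)$. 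As $q^j\gamma\le 1<q^{j+1}\gamma$, the three families of coefficients are nonnegative, so $H^\mu_{m_2}(\mathcal P)\le\log_{m_2}(\#\mathcal P)$ yields $S(\Omega,\mu)\le\dim_M(X_\Omega)$ for every $\mu$ and hence the (always valid) inequality $\dim_H(X_\Omega)\le\dim_M(X_\Omega)$; moreover $\dim_H(X_\Omega)=\dim_M(X_\Omega)$ if and only if $S(\Omega,\mu_0)=\dim_M(X_\Omega)$.

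Assume first $q^j\gamma<1$, so that every coefficient above is strictly positive. By strict concavity of $x\mapsto -x\log x$, $S(\Omega,\mu_0)=\dim_M(X_\Omega)$ holds if and only if $\mu_0$ is equidistributed (gives equal mass to every cell) on each of the partitions $\alpha^1_p$ $(1\le p\le j)$, $\alpha^2_{p-j-1}\lor\alpha^1_p$ and $\alpha^2_{p-j}\lor\alpha^1_p$ $(p\ge j+1)$. These partitions of $\Omega$, ordered by refinement, form a single increasing chain
$$
\alpha^1_1\subset\cdots\subset\alpha^1_j\subset\alpha^1_{j+1}\subset\alpha^2_1\lor\alpha^1_{j+1}\subset\alpha^2_1\lor\alpha^1_{j+2}\subset\alpha^2_2\lor\alpha^1_{j+2}\subset\cdots
$$
whose join generates $\mathcal B(\Omega)$ and in which each step is a one-symbol refinement: one adds $y_{p+1}$ for $1\le p\le j$, and then alternately a new first-coordinate symbol $x_P$ (passing from $\alpha^2_{P-1}\lor\alpha^1_{P+j}$ to $\alpha^2_P\lor\alpha^1_{P+j}$) and a new second-coordinate symbol $y_{P+j+1}$ (passing from $\alpha^2_P\lor\alpha^1_{P+j}$ to $\alpha^2_P\lor\alpha^1_{P+j+1}$). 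A probability measure is equidistributed on every partition of such a chain if and only if it is equidistributed on the first one and, at each subsequent step, the number of offspring cells does not depend on the parent cell.

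I would then translate ``constant branching at each step'' into the four stated conditions, using that the cells of $\alpha^2_P\lor\alpha^1_{P+j}$ $(P\ge 1)$ are precisely the vertices of $\Gamma_j(\Omega)$ at depth $P$ and that a child in $\Gamma_j(\Omega)$ is obtained by first choosing the new $y_{P+j+1}$ and then the new $x_{P+1}$ (as in \eqref{solution}): the $y_{p+1}$-steps $(1\le p\le j)$ give condition~3; the $x_1$-step gives condition~2; the $x_P$-steps $(P\ge 2)$ give condition~4; and equidistribution on the vertex-level partitions $\alpha^2_P\lor\alpha^1_{P+j}$ forces $\#\{\text{children of }v\}$ to depend only on the depth of $v$, i.e.\ $\Gamma_j(\Omega)$ to be spherically symmetric (condition~1), which together with condition~4 makes the $y_{P+j+1}$-steps automatic. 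Conversely, assuming conditions 1--4, all these offspring numbers are constant, so equidistribution along the chain defines a measure $\mu_1$ on $\Omega$ with $H^{\mu_1}_{m_2}(\mathcal P)=\log_{m_2}(\#\mathcal P)$ for every partition of the chain; then $S(\Omega,\mu_1)=\dim_M(X_\Omega)$, and since $S(\Omega,\mu_1)\le\dim_H(X_\Omega)\le\dim_M(X_\Omega)$, equality holds throughout. The degenerate case $q^j\gamma=1$ (i.e.\ $m_1=m_2^{q^j}$) must be handled apart: there the middle sum vanishes, $\alpha^1_{j+1}$ leaves the chain, and the steps that then merge an $x$- and a $y$-symbol are governed by joint-branching conditions rather than by conditions~2 and~4 separately, so I would state and prove the corresponding variant in that case.

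The technical heart, and the step I expect to be the main obstacle, is the translation in the previous paragraph: correctly setting up the refinement chain and matching each one-symbol step to the appropriate condition while keeping the three regimes apart --- the ``shallow'' levels $p\le j$ where only the second coordinate moves (condition~3), the transition level where the first coordinate enters for the first time and the root of $\Gamma_j(\Omega)$ is exceptional (condition~2), and the ``deep'' levels governed jointly by the tree $\Gamma_j(\Omega)$ (condition~1) and by the first-coordinate branching (condition~4). In particular one has to check carefully that spherical symmetry of $\Gamma_j(\Omega)$ together with condition~4 indeed forces the remaining (second-coordinate) branching numbers at deep levels to be constant, which is what makes conditions 1--4 sufficient and not merely necessary.
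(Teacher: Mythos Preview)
Your argument is correct and shares its opening move with the paper: both observe that the Minkowski formula of Theorem~\ref{test} is precisely $S(\Omega,\mu)$ with each $H^\mu_{m_2}(\mathcal P)$ replaced by $\log_{m_2}(\#\mathcal P)$, so equality holds iff the optimal measure is equidistributed on every partition occurring. From there you and the paper diverge. The paper's translation step is terse: it invokes the explicit formula for $\mu$ in Proposition~\ref{mesure} together with the uniqueness of $t$ in Lemma~\ref{arbre}, the idea being that equidistribution forces symmetries on the vector $t$, which by uniqueness reflect symmetries of the tree $\Gamma_j(\Omega)$. Your route is more elementary and more transparent: you arrange the partitions into a single refinement chain, observe that simultaneous equidistribution forces the branching numbers at each one-symbol step to be constant, and read off conditions~1--4 directly; for the converse you bypass $\mu_0$ entirely and build an equidistributed $\mu_1$ by hand. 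This avoids both the explicit optimal measure and the fixed-point uniqueness, at the cost of having to check (as you do) that spherical symmetry of $\Gamma_j(\Omega)$ together with condition~4 recovers the $y_{P+j+1}$-branching constancy. One small imprecision: your sentence ``a probability measure is equidistributed on every partition of such a chain if and only if \ldots'' is false in the backward direction for an arbitrary measure, though you only use the forward direction for $\mu_0$ and then construct $\mu_1$ separately, so the logic is sound. Finally, your observation about the degenerate case $q^j\gamma=1$ is a genuine point the paper does not address: there the middle sum drops out, $\alpha^1_{j+1}$ and the intermediate $y$-steps leave the chain, and the equivalence as stated need not hold.
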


\begin{proof}
Compare the formulas in Theorems \ref{test2} and \ref{test}. We have 
$$
H^\mu_{m_2}(\alpha^2_{p-j} \lor \alpha^1_p) \leq \log_{m_2}(|\text{Pref}_{p-j,j}(\Omega)|),
$$
with equality if and only if every $[u]$ for $u \in \text{Pref}_{p-j,j}(\Omega)$ has equal measure $\mu$, and similar results for $H^\mu_{m_2}(\alpha^1_p)$ and $H^\mu_{m_2}(\alpha^2_{p-j-1} \lor \alpha^1_p)$. Now, the expression of $\mu$ in Proposition \ref{mesure} and uniqueness in Lemma \ref{arbre} give the conditions we stated.\newline

\end{proof}

\section{Generalization to the higher dimensional cases}\label{3}

We are now trying to compute $\dim_H(\mathbb{P}_\mu)$ in any dimension $d \geq 2$. $\Omega$ is now a closed subset of 
$$
\Sigma_{m_1,\ldots,m_d} = (\mathcal{A}_1 \times \cdots \times \mathcal{A}_d)^{\mathbb{N}^*},
$$
where $m_1 \geq \cdots \geq m_d \geq 2$ and $\mathcal{A}_i = \{0,\ldots,m_i-1\}$. We define $$\gamma_i = \frac{\log(m_{i})}{\log(m_{i-1})}$$ and $$L_i : n \in \mathbb{N} \mapsto \left\lceil \frac{n}{\gamma_i} \right\rceil$$ for $2 \leq i \leq d$ ($L_1$ being the identity on $\mathbb{N}$). We can again define the Borel probability measures $\mathbb{P}_\mu$ on $X_\Omega$ as in the two-dimensional case. For $(x^1,\ldots,x^d) \in X_\Omega$ we need to compute $\mathbb{P}_\mu(B_n(x^1,\ldots,x^d))$, where 
$$
B_n(x^1,\ldots,x^d) = \{(u^1,\ldots,u^d) \in \Sigma_{m_1,\dots,m_d} : \forall 1 \leq k \leq d, \ \forall 1 \leq i \leq (L_k \circ \cdots \circ L_1)(n), \ u^k_i = x^k_i \}.
$$

\subsection{Computation of $\dim_H(\mathbb{P}_\mu)$ for $3$-dimensional sponges}

First suppose that $d=3$, as the computation of $\dim_H(\mathbb{P}_\mu)$ in this case helps to better understand the general one. Let $j_2,j_3$ be the unique non-negative integers such that $q^{j_2} \leq \frac{1}{\gamma_2} < q^{j_2+1}$ and $q^{j_3} \leq \frac{1}{\gamma_3} < q^{j_3+1}$. Now we get two cases : either $q^{j_2+j_3} \leq \frac{1}{\gamma_2 \gamma_3} < q^{j_2+j_3+1}$ or $q^{j_2+j_3+1} \leq \frac{1}{\gamma_2 \gamma_3} < q^{j_2+j_3+2}$. Suppose we are in the first one. In this case for all $n$ large enough we have $q^{j_2}n \leq L_2(n) < q^{j_2+1}n$, $q^{j_3}n \leq L_3(n) < q^{j_3+1}n$ and $q^{j_2+j_3}n \leq L_3(L_2(n)) < q^{j_2+j_3+1}n$. In order to compute $\dim_H(\mathbb{P}_\mu)$ we now use the same method as in Proposition $\ref{test2}$. For $n = q^\ell r$ with $\ell$ fixed we can write
$$
\left]\frac{L_3(L_2(n))}{q^\ell},L_3(L_2(n))\right] = \bigsqcup_{p=1}^\ell \left]\frac{L_3(L_2(n))}{q^p},\frac{L_3(L_2(n))}{q^{p-1}}\right].
$$
We now have for all $r$ large enough
\begin{itemize}
\item $1 \leq p \leq j_3 \Longrightarrow \left]\frac{L_3(L_2(n))}{q^p},\frac{L_3(L_2(n))}{q^{p-1}}\right] \subset \left] L_2(n), L_3(L_2(n)) \right]$
\item $j_3+1 \leq p \leq j_3+j_2 \Longrightarrow \left]\frac{L_3(L_2(n))}{q^p},\frac{L_3(L_2(n))}{q^{p-1}}\right] \subset \left] n, L_3(L_2(n)) \right]$. We have $\frac{L_2(n)}{q^{p-j_3-1}} \in \left]\frac{L_3(L_2(n))}{q^p},\frac{L_3(L_2(n))}{q^{p-1}}\right]$ and
$$
i \in \left]\frac{L_3(L_2(n))}{q^p}, \frac{L_2(n)}{q^{p-j_3-1}} \right] \Longrightarrow n < i \leq q^{p-j_3-1} i \leq L_2(n) < q^{p-j_3} i \leq q^{p-1} i \leq L_3(L_2(n)) < q^p i,
$$

$$
i \in \left] \frac{L_2(n)}{q^{p-j_3-1}}, \frac{L_3(L_2(n))}{q^{p-1}} \right] \Longrightarrow n < i \leq q^{p-j_3-2} i \leq L_2(n) < q^{p-j_3-1} i \leq q^{p-1} i \leq L_3(L_2(n)) < q^p i.
$$

\item For $j_3+j_2+1 \leq p \leq \ell$ we have $\frac{L_2(n)}{q^{p-j_3-1}}, \frac{n}{q^{p-j_2-j_3-1}} \in \left]\frac{L_3(L_2(n))}{q^p},\frac{L_3(L_2(n))}{q^{p-1}}\right]$ and $\frac{n}{q^{p-j_2-j_3-1}} \leq \frac{L_2(n)}{q^{p-j_3-1}}$, thus if $i \in \left]\frac{L_3(L_2(n))}{q^p},\frac{n}{q^{p-j_2-j_3-1}} \right]$ then 
$$
q^{p-j_2-j_3-1} i \leq n < q^{p-j_2-j_3} i \leq q^{p-j_3-1} i \leq L_2(n) < q^{p-j_3} i \leq q^{p-1} i \leq L_3(L_2(n)) < q^p i, 
$$

if $i \in \left]\frac{n}{q^{p-j_2-j_3-1}},\frac{L_2(n)}{q^{p-j_3-1}}\right]$ then 
$$
q^{p-j_2-j_3-2} i \leq n < q^{p-j_2-j_3-1} i \leq q^{p-j_3-1} i \leq L_2(n) < q^{p-j_3} i \leq q^{p-1} i \leq L_3(L_2(n)) < q^p i,
$$

and if $i \in \left]\frac{L_2(n)}{q^{p-j_3-1}}, \frac{L_3(L_2(n))}{q^{p-1}}\right]$ then 
$$
q^{p-j_2-j_3-2} i \leq n < q^{p-j_2-j_3-1} i \leq q^{p-j_3-2} i \leq L_2(n) < q^{p-j_3-1} i \leq q^{p-1} i \leq L_3(L_2(n)) < q^p i.
$$
\end{itemize}
Denote by $\alpha^3_p$, $\alpha^2_p$ and $\alpha^1_p$ the partitions of $\Omega$ into cylinders of length $p$ along all three coordinates, the second and the third ones, and the third one respectively. Using the same approach as in the two dimensional case we can get 
\begin{align*}
\begin{split}
\dim_H(\mathbb{P}_\mu) &= (q-1)^2 \sum_{p=1}^{j_3} \frac{H^\mu_{m_3}(\alpha^1_p)}{q^{p+1}} + (q-1)(\gamma_3 q^{j_3+1}-1) \sum_{p=j_3+1}^{j_2+j_3} \frac{H^\mu_{m_3}(\alpha^2_{p-j_3} \lor \alpha^1_p)}{q^{p+1}} \\&+ (q-1)(1-\gamma_3 q^{j_3}) \sum_{p=j_3+1}^{j_2+j_3} \frac{H^\mu_{m_3}(\alpha^2_{p-j_3-1} \lor \alpha^1_p)}{q^{p}} \\&+ (q-1)(\gamma_2 \gamma_3 q^{j_2+j_3+1}-1) \sum_{p=j_2+j_3+1}^\infty \frac{H^\mu_{m_3}(\alpha^3_{p-j_2-j_3} \lor \alpha^2_{p-j_3} \lor \alpha^1_p)}{q^{p+1}} \\&+ (q-1)(\gamma_3 q^{j_3} - \gamma_2 \gamma_3 q^{j_2 + j_3}) \sum_{p=j_2+j_3+1}^\infty \frac{H^\mu_{m_3}(\alpha^3_{p-j_2-j_3-1} \lor \alpha^2_{p-j_3} \lor \alpha^1_p)}{q^{p}} \\&+ (q-1) (1-\gamma_3 q^{j_3}) \sum_{p=j_2+j_3+1}^\infty \frac{H^\mu_{m_3}(\alpha^3_{p-j_2-j_3-1} \lor \alpha^2_{p-j_3-1} \lor \alpha^1_p)}{q^{p}}.
\end{split}
\end{align*}
If we suppose now that $q^{j_2+j_3+1} \leq \frac{1}{\gamma_2 \gamma_3} < q^{j_2+j_3+2}$, we have $\frac{L_2(n)}{q^{p-j_3-1}} \leq \frac{n}{q^{p-j_2-j_3-2}}$ for $n$ large enough and we get 
\begin{align} \label{uuu}
\begin{split}
\dim_H(\mathbb{P}_\mu) &= (q-1)^2 \sum_{p=1}^{j_3} \frac{H^\mu_{m_3}(\alpha^1_p)}{q^{p+1}} + (q-1)(\gamma_3 q^{j_3+1}-1) \sum_{p=j_3+1}^{j_2+j_3+1} \frac{H^\mu_{m_3}(\alpha^2_{p-j_3} \lor \alpha^1_p)}{q^{p+1}} \\&+ (q-1)(1-\gamma_3 q^{j_3}) \sum_{p=j_3+1}^{j_2+j_3+1} \frac{H^\mu_{m_3}(\alpha^2_{p-j_3-1} \lor \alpha^1_p)}{q^{p}} \\&+ (q-1)(\gamma_3 q^{j_3+1}-1) \sum_{p=j_2+j_3+2}^\infty \frac{H^\mu_{m_3}(\alpha^3_{p-j_2-j_3-1} \lor  \alpha^2_{p-j_3} \lor \alpha^1_p)}{q^{p+1}} \\&+ (q-1)(\gamma_2 \gamma_3 q^{j_2+j_3+1} - \gamma_3 q^{j_3}) \sum_{p=j_2+j_3+2}^\infty \frac{H^\mu_{m_3}(\alpha^3_{p-j_2-j_3-1} \lor  \alpha^2_{p-j_3-1} \lor \alpha^1_p)}{q^{p}} \\&+ (q-1) (1- \gamma_2 \gamma_3 q^{j_2+j_3+1}) \sum_{p=j_2+j_3+2}^\infty \frac{H^\mu_{m_3}(\alpha^3_{p-j_2-j_3-2} \lor  \alpha^2_{p-j_3-1} \lor \alpha^1_p)}{q^{p}}.
\end{split}
\end{align}
In the next subsection we will adopt a more general point of view to avoid this dichotomy case.

\subsection{Results in any dimension} \label{32}

We get back to the general case, by first introducing some notations and making a few observations before stating the theorems. Let $I \subset \mathbb{N}^*$ and $K \subset \llbracket 1,d \rrbracket$ be finite sets. If $x \in \Omega$ and $(x^k_i)_{\substack{i \in I \\ k \in K}}$ is a finite set of coordinates of $x$ (the upper index corresponding to the ``geometric'' coordinate and the lower one being the digit) we define the generalized cylinder
$$
\left[(x^k_i)_{\substack{i \in I \\ k \in K}}\right] = \{y \in \Omega : y^k_i = x^k_i \ \ \forall i \in I, \ \forall k \in K\}.
$$
For some arbitrary coordinate functions $\chi_1,\ldots,\chi_N \in \{\{x \in \Omega \mapsto x^k_i\} : k \in \llbracket 1,d \rrbracket, \ i \geq 1\}$ we also define $$\text{Pref}_{\chi_1,\ldots,\chi_N}(\Omega) = \{\left(\chi_1(x),\ldots,\chi_N(x)\right) : x \in \Omega\}.$$ For all $t \in \llbracket 2,d \rrbracket$, let $j_t \in \mathbb{N}$ such that
$$
q^{j_t} \leq \frac{1}{\gamma_{t}} < q^{j_t+1}.
$$
There is a unique sequence of integers $(n_{t})_{2 \leq t \leq d}$ such that 
$$
\forall t \in \llbracket 1,d-1 \rrbracket, \ q^{j_d+j_{d-1}+\cdots+j_{t+1}+n_{t+1}} \leq \frac{1}{\gamma_d \gamma_{d-1} \cdots \gamma_{t+1}} < q^{j_d+j_{d-1}+\cdots+j_{t+1}+n_{t+1}+1}.
$$
Let $$p_t = j_d+j_{d-1}+\cdots+j_{t+1}+n_{t+1}.$$ The sequence $(n_t)$ takes its values in $\llbracket 0,d-2 \rrbracket$ and is non-decreasing; moreover $n_d = 0$ and $n_{t} \in \{n_{t+1},n_{t+1}+1\}$ for $2 \leq t \leq d-1$. The integers $j_t$, $t \in \llbracket 2,d \rrbracket$ and $n_t$, $t \in \llbracket 2,d-1 \rrbracket$ are the $2d-3$ parameters mentioned in the introduction. Thus we get that for all $n$ large enough, for $s \in \llbracket 1,d-1 \rrbracket$
$$
\forall t \in \llbracket s,d-1 \rrbracket, \ \forall p \in \llbracket p_{s}+1, p_{s-1} \rrbracket, \ \frac{L_{t} \circ \cdots \circ L_1(n)}{q^{p-p_t-1}} \in \left] \frac{L_d \circ \cdots \circ L_1(n)}{q^p}, \frac{L_d \circ \cdots \circ L_1(n)}{q^{p-1}} \right]
$$
and 
$$
\frac{L_d \circ \cdots \circ L_1(n)}{q^p} \geq L_{s-1} \circ \cdots \circ L_1(n),
$$
with $p_0 = \ell$ and $L_0(n) = 0$. If $p \in \llbracket 1, p_{d-1} \rrbracket$ then
$$
\frac{L_d \circ \cdots \circ L_1(n)}{q^p} \geq L_{d-1} \circ \cdots \circ L_1(n).
$$
For $s \in \llbracket 1,d-1 \rrbracket$ let $\sigma_s \in \frak{S}(\llbracket s,d-1 \rrbracket)$ be the unique permutation such that the sequence
$$
\left(\frac{L_{\sigma_s(t)} \circ \cdots \circ L_1(n)}{q^{p-p_{\sigma_s(t)}-1}}\right)_{t \in \llbracket s,d-1 \rrbracket}
$$
is non-decreasing for all $n$ large enough and all $p$. We define 
$$
I^{s,s-1}_p = \left]\frac{L_d \circ \cdots \circ L_1(n)}{q^p}, \frac{L_{\sigma_s(s)} \circ \cdots \circ L_1(n)}{q^{p-p_{\sigma_s(s)}-1}} \right],
$$

$$
I^{s,t}_p = \left]\frac{L_{\sigma_s(t)} \circ \cdots \circ L_1(n)}{q^{p-p_{\sigma_s(t)}-1}} , \frac{L_{\sigma_s(t+1)} \circ \cdots \circ L_1(n)}{q^{p-p_{\sigma_s(t+1)}-1}} \right]
$$
for $t \in \llbracket s,d-2 \rrbracket$ and
$$
I^{s,d-1}_p = \left]\frac{L_{\sigma_s(d-1)} \circ \cdots \circ L_1(n)}{q^{p-p_{\sigma_s(d-1)}-1}} , \frac{L_d \circ \cdots \circ L_1(n)}{q^{p-1}} \right]. 
$$
We will use the partitions

$$
\left] \frac{L_d \circ \cdots \circ L_1(n)}{q^p}, \frac{L_d \circ \cdots \circ L_1(n)}{q^{p-1}} \right] = \bigsqcup_{t=s-1}^{d-1} I^{s,t}_p
$$
for all $p \in \llbracket p_{s}+1, p_{s-1} \rrbracket$. Observe that for $i \in \left] \frac{L_d \circ \cdots \circ L_1(n)}{q^p}, \frac{L_d \circ \cdots \circ L_1(n)}{q^{p-1}} \right]$ such that $q \nmid i$ we have $$q^{p-p_k-2} i \leq L_{k} \circ \cdots \circ L_1(n) < q^{p-p_k} i$$ for all $k \in \llbracket 1,d-1 \rrbracket$. Hence for $k \in \llbracket 1,d-1 \rrbracket$ either 

$$
q^{p-p_k-2} i \leq L_{k} \circ \cdots \circ L_1(n) < q^{p-p_k-1} i
$$
or

$$
q^{p-p_k-1} i \leq L_{k} \circ \cdots \circ L_1(n) < q^{p-p_k} i.
$$
Moreover if $i \in I_p^{s,t}$ then
\begin{align*}
\begin{split}
&\frac{L_{\sigma_s(s)} \circ \cdots \circ L_1(n)}{q^{p-p_{\sigma_s(s)}-1}} \leq \cdots \leq \frac{L_{\sigma_s(t)} \circ \cdots \circ L_1(n)}{q^{p-p_{\sigma_s(t)}-1}} < i \leq \\& \frac{L_{\sigma_s(t+1)} \circ \cdots \circ L_1(n)}{q^{p-p_{\sigma_s(t+1)}-1}} \leq \cdots \leq \frac{L_{\sigma_s(d-1)} \circ \cdots \circ L_1(n)}{q^{p-p_{\sigma_s(d-1)}-1}}.
\end{split}
\end{align*}
For $s \in \llbracket 1,d-1 \rrbracket$ and $t \in \llbracket s-1,d-1 \rrbracket$ let $(p^{s,t}_k)_{k \in \llbracket s,d-1 \rrbracket}$ be defined by $p^{s,t}_{k} = p_k+1$ if $k \in \sigma_s(\llbracket s,t \rrbracket)$, and $p^{s,t}_k = p_k$ otherwise. Then we have
$$
i \in I_p^{s,t} \Longrightarrow \forall \ k \in \llbracket s,d-1 \rrbracket, \ q^{p-p^{s,t}_{k}-1} i \leq L_{k} \circ \cdots \circ L_1(n) < q^{p-p^{s,t}_{k}} i.
$$
Thus the $\mathbb{P}_\mu$-mass of an arbitrary ``quasi-cube'' is
\begin{align} \label{masse}
\begin{split}
\mathbb{P}_\mu(B_n(x^1, \ldots, x^d)) = &\bigg(\prod_{p=1}^{j_d} \prod_{\substack{i \in \left]\frac{L_d \circ \cdots \circ L_1(n)}{q^p}, \frac{L_d \circ \cdots \circ L_1(n)}{q^{p-1}} \right] \\ q \nmid i}} \mu \left( \left[x_i^d \cdots x^d_{q^{p-1}i} \right] \right) \bigg) \\&\boldsymbol{\cdot} \ \prod_{s=2}^{d-1} \ \bigg( \prod_{p=p_s+1}^{p_{s-1}} \ \prod_{t=s-1}^{d-1} \ \prod_{\substack{i \in I^{s,t}_p \\ q \nmid i}} \mu (C^{s,t}_{p,i}(x)) \bigg) \\ &\boldsymbol{\cdot} \bigg(\prod_{p=p_{1}+1}^{\ell} \ \prod_{t=0}^{d-1} \ \prod_{\substack{i \in I^{1,t}_p \\ q \nmid i}} \mu (C^{1,t}_{p,i}(x)) \bigg) \boldsymbol{\cdot} D_n(x^1,\ldots,x^d),
\end{split}
\end{align} 
where 
\begin{align*}
\begin{split}
 C^{s,t}_{p,i}(x) =&\bigg[ \bigg(x_i^{s},\ldots,x_i^d\bigg) \cdots \bigg(x_{q^{p-p_s^{s,t}-1} i}^{s},\ldots,x_{q^{p-p_s^{s,t}-1} i}^d\bigg) \bigg] \\&\bigcap \bigg[\bigg(x_{q^{p-p_s^{s,t}} i}^{s+1},\ldots,x_{q^{p-p_{s}^{s,t}} i}^d\bigg) \cdots \bigg(x_{q^{p-p_{s+1}^{s,t}-1} i}^{s+1},\ldots,x_{q^{p-p_{s+1}^{s,t}-1} i}^d\bigg) \bigg] \bigcap \cdots \\&\bigcap \bigg[ \bigg(x^{d-1}_{q^{p-p_{d-2}^{s,t}} i}, x^{d}_{q^{p-p_{d-2}^{s,t}} i}\bigg) \cdots \bigg(x^{d-1}_{q^{p-p_{d-1}^{s,t}-1} i}, x^{d}_{q^{p-p_{d-1}^{s,t}-1} i}\bigg) \bigg] \bigcap \bigg[ x^d_{q^{p-p_{d-1}^{s,t}} i} \ \cdots \ x^d_{q^{p-1}i} \bigg]
\end{split}
\end{align*}
and $D_n(x^1,\ldots,x^d)$ is the residual term. Note that $C^{s,t}_{p,i}(x)$ can also be compactly written as
\begin{align*}
\begin{split}
&\left[\pi^s(x)_i \cdots \pi^s(x)_{q^{p-p_s^{s,t}-1} i}\right] \bigcap \left[\pi^{s+1}(x)_{q^{p-p_s^{s,t}} i} \cdots \pi^{s+1}(x)_{q^{p-p_{s+1}^{s,t}-1} i}\right] \bigcap \cdots \\&\bigcap \left[\pi^d(x)_{q^{p-p_{d-1}^{s,t}} i} \cdots \pi^d(x)_{q^{p-1}i}\right],
\end{split}
\end{align*}
using the projections $\pi^k : x \mapsto (x^k,\ldots,x^d)$ for $k \in \llbracket 1,d \rrbracket$.\newline 

Now, for all $p \geq 1$ we define

\begin{align*}
\begin{split}
\delta^{d,d-1}_{p} &= \lim_{n \rightarrow \infty} \frac{\# \left\{i \in \left]\frac{L_d \circ \cdots \circ L_1(n)}{q^p}, \frac{L_d \circ \cdots \circ L_1(n)}{q^{p-1}} \right] : q \nmid i \right\}}{L_d \circ \cdots \circ L_1(n)} = \frac{(q-1)^2}{q^{p+1}},\\ \delta^{s,s-1}_{p} &= \lim_{n \rightarrow \infty} \frac{\# \left\{i \in I^{s,0}_p : q \nmid i \right\}}{L_d \circ \cdots \circ L_1(n)} = \frac{(q^{p_{\sigma_s(s)}+1} \prod_{i=\sigma_s(s)+1}^d \gamma_i -1)(q-1)}{q^{p+1}}, \\ \delta^{s,t}_p &= \lim_{n \rightarrow \infty} \frac{\# \left\{i \in I^{s,t}_p : q \nmid i \right\}}{L_d \circ \cdots \circ L_1(n)} \\ \\ &= \frac{(q^{p_{\sigma_s(t+1)}} \prod_{i=\sigma_s(t+1)+1}^d \gamma_i -q^{p_{\sigma_s(t)}} \prod_{i=\sigma_s(t)+1}^d \gamma_i)(q-1)}{q^p} \ \text{for} \ t \in \llbracket s, d-2 \rrbracket
\end{split}
\end{align*}
and 

$$
\delta^{s,d-1}_{p} = \lim_{n \rightarrow \infty} \frac{\# \left\{i \in I^{s,s}_p : q \nmid i \right\}}{L_d \circ \cdots \circ L_1(n)} = \frac{(1-q^{p_{\sigma_s(d-1)}} \prod_{i=\sigma_s(d-1)+1}^d \gamma_i)(q-1)}{q^p}.\newline
$$
Moreover denote by $\alpha^k_p$ the partition of $\Omega$ into cylinders of length $p$ along the last $k$ coordinates for $k \in \llbracket 1,d \rrbracket$. Finally let

$$
\widetilde{H}_{s,p}^\mu = \sum_{t=s-1}^{d-1} \delta^{s,t}_p H^\mu_{m_d} \left(\alpha^{1}_p \lor \alpha^2_{p-p^{s,t}_{d-1}} \lor \alpha^3_{p-p^{s,t}_{d-2}} \lor \cdots \lor \alpha^{d-s+1}_{p-p^{s,t}_{s}}\right)
$$
for $s \in \llbracket 1,d \rrbracket$.\\ 

\begin{theorem}

The Borel probability measure $\mathbb{P}_\mu$ is exact dimensional and its dimension is
$$
S(\Omega,\mu) = \left(\sum_{p=1}^{j_d} \widetilde{H}_{d,p}^\mu \right) + \left(\sum_{s=2}^{d-1} \ \sum_{p=p_{s}+1}^{p_{s-1}} \widetilde{H}_{s,p}^\mu \right) + \sum_{p=p_{1}+1}^\infty \widetilde{H}_{1,p}^\mu.
$$
\end{theorem}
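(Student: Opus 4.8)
The plan is to adapt the proof of Theorem~\ref{test2} to arbitrary dimension $d$, replacing the two-dimensional product expansion of the $\mathbb{P}_\mu$-mass of a ball by the decomposition \eqref{masse}. Exactly as in the two-dimensional case, one first observes that it suffices to work along $n = q^\ell r$, $r \to \infty$, with $\ell$ fixed, and then let $\ell \to \infty$: if $q^\ell r \le n < q^\ell(r+1)$, comparing $B_n$ with $B_{q^\ell r}$ shows that the $\liminf$ (resp.\ $\limsup$) of $-\log_{m_d}(\mathbb{P}_\mu(B_n(x^1,\ldots,x^d)))/(L_d\circ\cdots\circ L_1(n))$ along all $n$ equals the one along $n = q^\ell r$. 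Since $\log_{m_d} = (\gamma_2\cdots\gamma_d)^{-1}\log_{m_1}$ and $L_d\circ\cdots\circ L_1(n) \sim (\gamma_2\cdots\gamma_d)^{-1} n$, this quantity has the same limiting behaviour as $-\log_{m_1}(\mathbb{P}_\mu(B_n))/n$, so Theorem~\ref{calme} (applied in dimension $d$) will turn the almost sure value of the limit into $\dim_H(\mathbb{P}_\mu)$ and at the same time yield exact dimensionality.

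Next I would take logarithms in \eqref{masse} and divide by $L_d\circ\cdots\circ L_1(n)$. For $\ell$ fixed, the main term is a finite sum of blocks indexed by a pair $(s,p)$ — the block $1 \le p \le j_d$ (with $s = d$), the blocks $p_s+1 \le p \le p_{s-1}$ for $2 \le s \le d-1$, and the block $p_1+1 \le p \le \ell$ (with $s = 1$) — each of which splits over $t \in \llbracket s-1,d-1\rrbracket$ according to the partition $\big]\frac{L_d\circ\cdots\circ L_1(n)}{q^p},\frac{L_d\circ\cdots\circ L_1(n)}{q^{p-1}}\big] = \bigsqcup_t I^{s,t}_p$. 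The crucial point is that for $i \in I^{s,t}_p$ with $q \nmid i$, the ``shape'' of the generalized cylinder $C^{s,t}_{p,i}(x)$ — which projections $\pi^k(x)$ are tracked and the corresponding truncation lengths $q^{p - p^{s,t}_k}$ — depends only on $(s,t,p)$ and not on $i$, because the inequalities $q^{p-p^{s,t}_k-1} i \le L_k\circ\cdots\circ L_1(n) < q^{p-p^{s,t}_k} i$ hold uniformly on $I^{s,t}_p$. Consequently, for the admissible values of $i$, the random variables $x \mapsto -\log_{m_d}(\mu(C^{s,t}_{p,i}(x)))$ are i.i.d.\ and uniformly bounded, with common expectation $H^\mu_{m_d}\big(\alpha^1_p \lor \alpha^2_{p-p^{s,t}_{d-1}} \lor \cdots \lor \alpha^{d-s+1}_{p-p^{s,t}_s}\big)$.

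Then I would apply Lemma~\ref{proba} to each of these finitely many (for $\ell$ fixed) families, obtaining that for $\mathbb{P}_\mu$-almost every $x$,
$$
\frac{1}{\#\{i \in I^{s,t}_p : q \nmid i\}} \sum_{\substack{i \in I^{s,t}_p \\ q \nmid i}} \big({-}\log_{m_d}(\mu(C^{s,t}_{p,i}(x)))\big) \underset{r \to \infty}{\longrightarrow} H^\mu_{m_d}\big(\alpha^1_p \lor \alpha^2_{p-p^{s,t}_{d-1}} \lor \cdots \lor \alpha^{d-s+1}_{p-p^{s,t}_s}\big),
$$
and combine this with the counting asymptotics $\#\{i \in I^{s,t}_p : q \nmid i\} \sim \delta^{s,t}_p\,(L_d\circ\cdots\circ L_1(n))$, which are precisely the stated definitions of the $\delta^{s,t}_p$. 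Summing over $t$, then over $p$ and $s$, the main term converges to $\big(\sum_{p=1}^{j_d}\widetilde{H}^\mu_{d,p}\big) + \big(\sum_{s=2}^{d-1}\sum_{p=p_s+1}^{p_{s-1}}\widetilde{H}^\mu_{s,p}\big) + \sum_{p=p_1+1}^{\ell}\widetilde{H}^\mu_{1,p}$. It then remains to control the residual term $D_n(x^1,\ldots,x^d)$ exactly as in the proof of Theorem~\ref{test2}: bound the number $d_n$ of letters of $\mathcal{A}_1\times\cdots\times\mathcal{A}_d$ appearing in the developed $D_n$ by $O\big((\ell+1)(L_d\circ\cdots\circ L_1(n))/q^\ell\big)$ plus a term of order $\ell^2$, set $S_n = \{x : D_n(x) \le (2m_1\cdots m_d)^{-d_n}\}$, note $\mathbb{P}_\mu(S_n) \le 2^{-d_n}$ and $\sum_r 2^{-d_{q^\ell r}} < \infty$, apply Borel--Cantelli along $n = q^\ell r$, and deduce $-\log_{m_d}(D_n(x))/(L_d\circ\cdots\circ L_1(n)) \to 0$ up to an error $O((\ell+1)/q^\ell)$. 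Letting $\ell \to \infty$ and invoking Theorem~\ref{calme} gives that $\mathbb{P}_\mu$ is exact dimensional of dimension $S(\Omega,\mu)$.

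The main obstacle is the combinatorial bookkeeping underlying \eqref{masse}: one must check that for all $n$ large enough the sets $I^{s,t}_p$ genuinely partition the relevant blocks; that the exponents $p^{s,t}_k$ correctly record, for $i$ in each $I^{s,t}_p$, the truncation length of the $k$-th projection inside $C^{s,t}_{p,i}(x)$ (this is where the permutations $\sigma_s$ and the integers $n_t$ enter, through the chains of inequalities comparing the various $q^{p-p^{s,t}_k}i$ with $L_k\circ\cdots\circ L_1(n)$); and that the weighted sum $\sum_t \delta^{s,t}_p\,H^\mu_{m_d}(\cdots)$ reassembles into $\widetilde{H}^\mu_{s,p}$. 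This is the content of the preliminary discussion preceding the theorem, and it is the genuinely new difficulty compared with dimension two, where only the single parameter $j$ was present.
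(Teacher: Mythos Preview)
Your proposal is correct and follows essentially the same approach as the paper: reduce to $n=q^\ell r$, use the decomposition \eqref{masse}, apply Lemma~\ref{proba} to the i.i.d.\ families $\{Y^{s,t}_{p,i}: x\mapsto -\log(\mu(C^{s,t}_{p,i}(x)))\}_{i\in I^{s,t}_p}$ with expectation $H^\mu_{m_d}(\alpha^1_p\lor\cdots\lor\alpha^{d-s+1}_{p-p^{s,t}_s})$, and control the residual $D_n$ via Borel--Cantelli with the set $S_n=\{x:D_n(x)\le(2m_1\cdots m_d)^{-d_n}\}$ exactly as in Theorem~\ref{test2}. Your identification of the combinatorial bookkeeping behind \eqref{masse} as the genuine difficulty is also what the paper emphasizes, and this is indeed handled in the discussion preceding the theorem rather than in its proof.
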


\begin{proof}
We use exactly the same method as in the proof of Theorem \ref{test2}, using the computation of $\mathbb{P}_\mu(B_n(x^1,\ldots,x^d))$ above, the different families of i.i.d random variables $$\left\{ Y^{s,t}_{p,i} : x \in X_\Omega \mapsto -\log(\mu(C^{s,t}_{p,i}(x)))\right\}_{i\in I_p^{s,t}}$$ whose expectations are $H^\mu_{m_d} \left(\alpha^{1}_p \lor \alpha^2_{p-p^{s,t}_{d-1}} \lor \alpha^3_{p-p^{s,t}_{d-2}} \lor \cdots \lor \alpha^{d-s+1}_{p-p^{s,t}_{s}}\right)$ respectively, and Theorem \ref{calme} and Lemma \ref{proba} repeatedly. We then show again that the residual term $D_n(x^1,\ldots,x^d)$, which is larger than or equal to the $\mathbb{P}_\mu$-mass of those points in $X_\Omega$ which share the same symbolic coordinates as $x$ for those indices $j$ which do not appear in the cylinders of the forme $C^{s,t}_{p,i}(x)$ with $p \leq \ell$, is $\mathbb{P}_\mu$-almost always negligible. To this end, we use like in the proof of Theorem \ref{test2} Borel--Cantelli lemma and the set
$$
S_n = \{(x^1,\ldots,x^d) \in X_\Omega : D_n(x^1,\ldots,x^d) \leq (2 m_1 m_2 \cdots m_d)^{-d_n}\},
$$
where the exponent 
$$
d_n = L_d \circ \cdots \circ L_1(n) - \sum_{p=1}^{\ell} \# \left\{ i \in \mathbb{N} \cap \left] \frac{L_d \circ \cdots \circ L_1(n)}{q^p},\frac{L_d \circ \cdots \circ L_1(n)}{q^{p-1}} \right] : q \nmid i \right\} p
$$
can likewise easily be controlled.
\end{proof}

We can again optimize this quantity following the method we used in the two-dimensional case, by conditioning all the entropy terms appearing in the third part of this expression for $p \geq p_{1}+2$ by the finest partition appearing in the term $\widetilde{H}_{1,p_{1}+1}^\mu$. We know that for all $s$ we have 
\begin{align} \label{ordre}
\begin{split}
&t \leq t' \Longrightarrow \forall k, \ p_k^{s,t} \leq p_k^{s,t'},\\&t < t' \Longrightarrow \exists k, \ p_k^{s,t} < p_k^{s,t'},
\end{split}
\end{align}
so this partition is the one appearing in the $t=0$ term, i.e. 
\begin{align*}
\begin{split}
\alpha &= \alpha^1_{p_{1}+1} \lor \alpha^2_{p_{1}+1-p_{d-1}^{1,0}} \lor \alpha^3_{p_{1}+1-p_{d-2}^{1,0}} \lor \cdots \lor \alpha^d_{p_{1}+1-p_{1}^{1,0}}.\\ &= \alpha^1_{p_{1}+1} \lor \alpha^2_{p_{1}+1-p_{d-1}} \lor \alpha^3_{p_{1}+1-p_{d-2}} \lor \cdots \lor \alpha^d_{1}.
\end{split}
\end{align*}
If $C$ is a cylinder of this partition in $\Omega$, denote by $\Omega_C$, $\theta_C$ and $\mu_C$ the associate rooted set at $C \in \alpha$ in $\Omega$, its $\mu$-mass and the normalized measure induced on it respectively. Since for $p \geq p_{1}+2$ and $t \in \llbracket 0,d-1 \rrbracket$ we can write 
\begin{align*}
\begin{split}
&H^\mu_{m_d} \left(\alpha^{1}_p \lor \alpha^2_{p-p^{1,t}_{d-1}} \lor \alpha^3_{p-p^{1,t}_{d-2}} \lor \cdots \lor \alpha^{d}_{p-p^{1,t}_{1}}\right) \\&= H^\mu_{m_d}(\alpha) + H^\mu_{m_d} \left(\alpha^{1}_p \lor \alpha^2_{p-p^{1,t}_{d-1}} \lor \alpha^3_{p-p^{1,t}_{d-2}} \lor \cdots \lor \alpha^{d}_{p-p^{1,t}_{1}} | \alpha \right)\\ &= H^{\mu}_{m_d}(\alpha) + \sum_{C \in \mathcal{C}} \theta_C H^{\mu_C}_{m_d} \bigg(\alpha^{1}_{p-1} \lor \alpha^2_{p-p^{1,t}_{d-1}-1} \lor \alpha^3_{p-p^{1,t}_{d-2}-1} \lor \cdots \lor \alpha^{d}_{p-p^{1,t}_{1}-1}(\Omega_C) \bigg),
\end{split}
\end{align*}
we get 
\begin{align} \label{S}
\begin{split}
S(\Omega,\mu) &= \sum_{p=1}^{j_d} \widetilde{H}_{d,p}^\mu + \sum_{s=2}^{d-1} \ \sum_{p=p_s+1}^{p_{s-1}} \widetilde{H}_{s,p}^\mu \\ &+ \sum_{t=1}^{d-1} \delta^{1,t}_{p_{1}+1} H^\mu_{m_d} \left(\alpha^1_{p_{1}+1} \lor \alpha^2_{p_{1}+1-p_{d-1}^{1,t}} \lor \cdots \lor \alpha^d_{p_{1}+1-p_{1}^{1,t}} \right) \\ &+ \left(\delta_{p_{1}+1}^{1,0}+\sum_{p=p_{1}+2}^\infty \ \sum_{t=0}^{d-1} \delta_p^{1,t}\right) H^\mu_{m_d}(\alpha) + \frac{1}{q} \sum_{C \in \mathcal{C}} \theta_C S(\Omega_C,\mu_C).
\end{split}
\end{align}
Now we can obtain the unique optimal measure as in the proof of Theorem \ref{mesure} by getting the $q_C$ with a recursive reasoning and repeating the argument for the entire suitable graphs. To make things clearer and to highlight the fact that the structure of the optimal measure is similar to the one appearing in the two-dimensional case, we introduce now the unique sequence of coordinate functions $(\chi_i)_{i \geq 1}$ such that if we reorder the partitions of $\Omega$ appearing in the expression of $\dim_H({\mathbb{P}_\mu})$ above as an increasing sequence $\beta_1 \leq \beta_2 \leq \cdots$ (the symbol $\leq$ corresponding there to the ``finer than'' partial order) we have 
$$
H^\mu(\beta_i) = -\int_{\Omega} \log_{m_d} \left(\mu \left( \left[\chi_1(x) \cdots \chi_i(x)\right]\right)\right) d \mu(x)
$$
for all $i \geq 1$. Here we used a slight generalization of the notion of cylinders we defined at the beginning of Section \ref{32}, allowing ourselves to use any family $A \subset \mathbb{N}^* \times \llbracket 1,d \rrbracket$ of coordinates of $x$ and not necessarily a product. This order is exactly the following (using again facts \eqref{ordre}):
\begin{align*}
\begin{split}
&\alpha^1_1 \leq \cdots \leq \alpha^1_{p_{d-1}} \leq \alpha^1_{p_{d-1}+1} \lor \alpha^2_{p_{d-1}+1-p_{d-1}^{d-1,d-1}} \leq \alpha^1_{p_{d-1}+1} \lor \alpha^2_{p_{d-1}+1-p_{d-1}^{d-1,d-2}} \leq \\&\alpha^1_{p_{d-1}+2} \lor \alpha^2_{p_{d-1}+2-p_{d-1}^{d-1,d-1}} \leq \alpha^1_{p_{d-1}+2} \lor \alpha^2_{p_{d-1}+2-p_{d-1}^{d-1,d-2}} \leq \cdots \leq \alpha^1_{p_{d-2}} \lor \alpha^2_{p_{d-2}-p_{d-1}^{d-1,d-1}} \leq \\&\alpha^1_{p_{d-2}} \lor \alpha^2_{p_{d-2}-p_{d-1}^{d-1,d-2}} \leq \cdots \leq \alpha^1_{p_2+1} \lor \alpha^{2}_{p_2+1-p_{d-1}^{2,d-1}} \lor \cdots \lor \alpha^{d-1}_{p_2+1-p_2^{2,d-1}} \leq \cdots \leq \\&\alpha^1_{p_2+1} \lor \alpha^{2}_{p_2+1-p_{d-1}^{2,1}} \lor \cdots \lor \alpha^{d-1}_{p_2+1-p_2^{2,1}} \leq \cdots \leq \alpha^1_{p_1} \lor \alpha^{2}_{p_1-p_{d-1}^{2,d-1}} \lor \cdots \lor \alpha^{d-1}_{p_1-p_2^{2,d-1}} \leq \cdots \leq \\&\alpha^1_{p_1} \lor \alpha^{2}_{p_1-p_{d-1}^{2,1}} \lor \cdots \lor \alpha^{d-1}_{p_1-p_2^{2,1}} \leq \alpha^1_{p_1+1} \lor \alpha^2_{p_1+1-p_{d-1}^{1,d-1}} \lor \cdots \lor \alpha^d_{p_1+1-p_1^{1,d-1}} \leq \cdots \leq \\&\alpha^1_{p_1+1} \lor \alpha^2_{p_1+1-p_{d-1}^{1,0}} \lor \cdots \lor \alpha^d_{p_1+1-p_1^{1,0}} = \alpha \leq \alpha^1_{p_1+2} \lor \alpha^2_{p_1+2-p_{d-1}^{1,d-1}} \lor \cdots \lor \alpha^d_{p_1+2-p_1^{1,d-1}} \leq \cdots \leq \\&\alpha^1_{p_1+2} \lor \alpha^2_{p_1+2-p_{d-1}^{1,0}} \lor \cdots \lor \alpha^d_{p_1+2-p_1^{1,0}} \leq \cdots
\end{split}
\end{align*}
For example, when $d=3$ and $\dim_H({\mathbb{P}_\mu})$ is given by \eqref{uuu}, this sequence is given by 
$$
(\chi_i)_{i \geq 1} = \left(x^3_1, \ldots, x^3_{j_3},x^3_{j_3+1},x^2_1,x^3_{j_3+2},x^2_2,\ldots,x^3_{j_2+j_3+2},x^1_1,x^2_{j_2+2},\ldots \right).
$$
We also denote by $(\delta_i)_{i \in \llbracket 1,N \rrbracket}$ the sequence of real factors giving weights to the $N$ entropies in $S(\Omega,\mu)$ (see \eqref{S}) when being reordered that way. Let $$N = p_{1} + 1 + \sum_{k=1}^{d-1} (p_{1} + 1 - p_k)$$ be the number of coordinates $\chi_i$ appearing in the partition $\alpha$ distinguished above. Finally for $(X_1,\ldots,X_N) \in \text{Pref}_{\chi_1,\ldots,\chi_N}(\Omega)$ let $\Gamma_{(X_1,\ldots,X_N)}(\Omega)$ be the directed graph whose set of vertices is $(X_1,\ldots,X_N) \cup \bigcup_{\ell=1}^\infty \text{Pref}_{\chi_1,\ldots,\chi_{N+\ell d}}(\Omega)$, and where for all $\ell \geq 0$ there is a directed edge from $u = X_1 \cdots X_{N+\ell d}$ to another one $v$ if and only if $v = X_1 \cdots X_{N+\ell d} X_{N+\ell d +1} \cdots X_{N+(\ell+1)d}$ for some $X_i$, $i \in \llbracket N+\ell d + 1, N+(\ell+1)d \rrbracket$.\\

\begin{theorem}
Let $\omega_1 = \sum_{i=1}^N \delta_i$, $\omega_k = \frac{\sum_{i=k}^N \delta_i}{\sum_{i=k-1}^N \delta_i}$ for $2 \leq k \leq N$ and $\omega_{N+1} = \frac{1}{q \delta_N}$. For all $(X_1,\ldots,X_N) \in \textup{Pref}_{\chi_1,\ldots,\chi_N}(\Omega)$ there is a unique vector $t \in \left[1,m_d^{\omega_{N+1} d}\right]^{\Gamma_{(X_1,\ldots,X_N)}(\Omega)}$ such that for all $\ell \geq 0$ and $\left(X_1,\ldots,X_{N+\ell d}\right) \in \Gamma_{(X_1,\ldots,X_N)}(\Omega)$ we have
\small
$$
\left(t_{X_1 \cdots X_{N+\ell d}}\right)^{\frac{1}{\tilde{\omega}_{N-d+1} \omega_{N+1}}} = \sum_{X'_{N+\ell d+1}} \bigg( \sum_{X'_{N+\ell d+2}} \bigg( \cdots \bigg( \sum_{X'_{N+(\ell+1)d}} t_{X_1 \cdots X'_{N+(\ell+1)d}} \bigg)^{\omega_N} \cdots \bigg)^{\omega_{N-d+3}} \bigg)^{\omega_{N-d+2}},
$$
\normalsize
where $\tilde{\omega}_{N-d+1} = \omega_1 \omega_2 \cdots \omega_{N-d+1}$. Moreover if we define 
$$
t_\varnothing = \sum_{X'_1} \bigg( \sum_{X'_2} \bigg( \cdots \bigg( \sum_{X'_{N-1}} \bigg( \sum_{X'_N} t_{X'_1 \cdots X'_N} \bigg)^{\omega_N} \bigg)^{\omega_{N-1}} \cdots \bigg)^{\omega_3} \bigg)^{\omega_2},
$$
the unique Borel probability measure maximizing $S(\Omega,\mu)$ is defined for all $\ell \geq 0$ by
\small
\begin{align}\label{optimal}
\begin{split}
&\mu \left( \left[X_1 \cdots X_{N+\ell d}\right]\right) \\&= \frac{t_{X_1 \cdots X_N}}{t_\varnothing} \prod_{p=2}^{N} \bigg(\sum_{X'_{p}} \bigg( \sum_{X'_{p+1}} \bigg( \cdots \bigg(\sum_{X'_{N-1}} \bigg(\sum_{X'_N} t_{X_1 \cdots X_{p-1} X'_p \cdots X'_N} \bigg)^{\omega_N} \bigg)^{\omega_{N-1}} \cdots \bigg)^{\omega_{p+2}} \bigg)^{\omega_{p+1}} \bigg)^{\omega_{p}-1} \\&\boldsymbol{\cdot} \prod_{k=1}^{\ell} t_{X_1 \cdots X_{N+kd}} \ t_{X_1 \cdots X_{N+(k-1)d}}^{-\frac{1}{\tilde{\omega}_{N-d+1} \omega_{N+1}}} \ \prod_{p=2}^d\bigg(\sum_{X'_{N+(k-1)d+p}} \bigg( \cdots \bigg(\sum_{X'_{N+kd}} t_{X_1 \cdots X'_{N+kd}} \bigg)^{\omega_N} \cdots \bigg)^{\omega_{N-d+p+1}} \bigg)^{\omega_{N-d+p}-1} 
\end{split}
\end{align}
\normalsize
and its Hausdorff dimension is equal to $\omega_1 \log_{m_d}(t_\varnothing)$.
\end{theorem}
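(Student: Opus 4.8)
The plan is to carry over, essentially line by line, the arguments of Lemma~\ref{arbre} and Proposition~\ref{mesure} to the tree $\Gamma_{(X_1,\ldots,X_N)}(\Omega)$, using the reordered sequence of partitions $\beta_1\le\beta_2\le\cdots$, the coordinate functions $(\chi_i)_{i\ge1}$, the weights $(\delta_i)_{i=1}^N$, and the recursion \eqref{S} for $S(\Omega,\mu)$ set up above. Concretely, I would prove the three assertions in turn: existence and uniqueness of the fixed-point vector $t$; the fact that the measure recursively maximizing $S(\Omega,\mu)$ is exactly \eqref{optimal}; and the identity $\dim_H(\mathbb{P}_\mu)=S(\Omega,\mu)=\omega_1\log_{m_d}(t_\varnothing)$, where the last equality uses the preceding theorem ($\mathbb{P}_\mu$ exact dimensional with dimension $S(\Omega,\mu)$).

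For the fixed point, let $Z=\left[1,m_d^{\omega_{N+1}d}\right]^{\Gamma_{(X_1,\ldots,X_N)}(\Omega)}$ and let $F\colon Z\to Z$ be defined by $$F(z)_{X_1\cdots X_{N+\ell d}}=\left(\sum_{X'_{N+\ell d+1}}\left(\cdots\left(\sum_{X'_{N+(\ell+1)d}}z_{X_1\cdots X'_{N+(\ell+1)d}}\right)^{\omega_N}\cdots\right)^{\omega_{N-d+2}}\right)^{\tilde{\omega}_{N-d+1}\omega_{N+1}}.$$ Since the exponents $\omega_{N-d+2},\ldots,\omega_N$ and $\tilde{\omega}_{N-d+1}\omega_{N+1}$ are all nonnegative, $F$ is monotone for the pointwise partial order, and a short estimate (each inner sum has at most $m_1$ terms) shows $F$ maps $Z$ into $Z$; hence $\mathbf 1\le F(\mathbf 1)\le F^2(\mathbf 1)\le\cdots$, and by compactness the iterates converge pointwise to a fixed point $t$. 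Uniqueness follows exactly as in Lemma~\ref{arbre}: if $t,t'$ are fixed points with $t\not\le t'$, set $\omega=\inf\{\xi>1:t\le\xi t'\}$; then $\omega\le m_d^{\omega_{N+1}d}$, continuity gives $t\le\omega t'$, hence $\omega>1$, while rescaling $z\mapsto\xi z$ multiplies $F(z)$ by $\xi^{\omega_N\omega_{N-1}\cdots\omega_{N-d+2}\,\tilde{\omega}_{N-d+1}\omega_{N+1}}=\xi^{\omega_1\omega_2\cdots\omega_N\omega_{N+1}}=\xi^{\delta_N\cdot\frac{1}{q\delta_N}}=\xi^{1/q}$, so that $t=F(t)\le F(\omega t')=\omega^{1/q}t'$, contradicting minimality of $\omega$. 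The only real computation here is the telescoping $\omega_1\omega_2\cdots\omega_N=\delta_N$, which, combined with $\omega_{N+1}=\frac1{q\delta_N}$, makes the scaling exponent $q^{-1}<1$, precisely as in dimension two.

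For the optimal measure, I would start from \eqref{S} and condition every entropy term occurring for $p\ge p_1+2$ by the finest partition $\alpha$ of the initial block (the $t=0$ term, by \eqref{ordre}). The resulting sub-optimizations over the rooted sets $\Omega_C$, $C\in\mathcal C$, decouple, and on each $\Omega_C$ one faces the same optimization one level deeper in $\Gamma_{(X_1,\ldots,X_N)}(\Omega)$; optimizing the weights one coordinate $\chi_i$ at a time by Lemma~\ref{convexe} yields probability vectors whose product is precisely \eqref{optimal}, with $z_{X_1\cdots X_{N+\ell d}}=m_d^{\omega_{N+1}S(\Omega_{X_1\cdots X_{N+\ell d}})}$ and $t_\varnothing=m_d^{S(\Omega)/\omega_1}$, these $z$ solving the displayed system. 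Since $\dim_H(\mathbb{P}_\mu)\le d$ forces $S(\Omega_v)\le d$, the $z$ lie in $\left[1,m_d^{\omega_{N+1}d}\right]^{\Gamma_{(X_1,\ldots,X_N)}(\Omega)}$, so $z=t$ by the uniqueness just established, and strict concavity of $x\mapsto-x\log x$ in Lemma~\ref{convexe} makes the maximizer unique. Substituting the optimal weights back into \eqref{S} and telescoping gives $S(\Omega,\mu)=\omega_1\log_{m_d}(t_\varnothing)$, which equals $\dim_H(\mathbb{P}_\mu)$ by the preceding theorem; as in Proposition~\ref{mesure}, this also yields the lower bound $\dim_H(X_\Omega)\ge\omega_1\log_{m_d}(t_\varnothing)$ via Theorem~\ref{calme}. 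The main obstacle is purely combinatorial bookkeeping: one must verify that the reordering $\beta_1\le\beta_2\le\cdots$ and the weights $(\delta_i)$ are consistent with \eqref{ordre}, and that the nested exponents $\omega_k$ telescope correctly both in the contraction estimate and in the final dimension identity; no new analytic ingredient beyond the two-dimensional case is required.
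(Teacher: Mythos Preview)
Your proposal is correct and follows essentially the same approach as the paper: the fixed-point argument is carried over verbatim from Lemma~\ref{arbre} (the paper simply says ``existence and uniqueness of $t$ are checked using a fixed point theorem as in Lemma~\ref{arbre}''), and the optimization proceeds by writing $S(\Omega,\mu)$ in the nested form with coefficients $\omega_k$ and applying Lemma~\ref{convexe} recursively, exactly as in Proposition~\ref{mesure}. Your explicit verification that the scaling exponent equals $q^{-1}$ via the telescoping $\omega_1\cdots\omega_N=\delta_N$ is a detail the paper leaves implicit, but otherwise the arguments coincide.
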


\begin{proof}

The existence and uniqueness of $t$ are checked using a fixed point theorem as in Lemma \ref{arbre}. We get with these notations that 
\begin{align*}
\begin{split}
S(\Omega,\mu) &= \sum_{i=1}^N \delta_i H^\mu_{m_d}(\beta_i) + \frac{1}{q} \sum_{X_1,\ldots,X_N} \theta_{X_1 \cdots X_N} S(\Omega_{X_1 \cdots X_N},\mu_{X_1 \cdots X_N})\\&= \omega_1 \bigg( H^\mu_{m_d}(\beta_1) + \omega_2 \sum_{X_1} \theta_{X_1} \bigg( -\sum_{X_2} \frac{\theta_{X_1 X_2}}{\theta_{X_1}} \log_{m_d} \left(\frac{\theta_{X_1 X_2}}{\theta_{X_1}} \right) \\&+ \omega_3 \sum_{X_2} \frac{\theta_{X_1 X_2}}{\theta_{X_1}} \bigg(-\sum_{X_3} \frac{\theta_{X_1 X_2 X_3}}{\theta_{X_1 X_2}} \log_{m_d} \left(\frac{\theta_{X_1 X_2 X_3}}{\theta_{X_1 X_2}} \right) \\&+ \omega_4 \sum_{X_3} \frac{\theta_{X_1 X_2 X_3}}{\theta_{X_1 X_2}} \bigg( \cdots + \omega_{N} \sum_{X_{N-1}} \frac{\theta_{X_1 \cdots X_{N-1}}}{\theta_{X_1 \cdots X_{N-2}}} \bigg( -\sum_{X_{N}} \frac{\theta_{X_1 \cdots X_{N}}}{\theta_{X_1 \cdots X_{N-1}}} \log_{m_d} \left(\frac{\theta_{X_1 \cdots X_{N}}}{\theta_{X_1 \cdots X_{n-1}}} \right) \\&+ \omega_{N+1} \sum_{X_N} \frac{\theta_{X_1 \cdots X_{N}}}{\theta_{X_1 \cdots X_{N-1}}} S \left(\Omega_{X_1 \cdots X_N},\mu_{X_1 \cdots X_N} \right) \bigg) \cdots \bigg) \bigg) \bigg) \bigg).
\end{split}
\end{align*}
Optimizing this expression as before, we get that $\theta_{X_1 \cdots X_N}$ equals
$$
\frac{z_{X_1 \cdots X_N}}{z_\varnothing} \prod_{p=2}^{N} \bigg(\sum_{X'_{p}} \bigg( \sum_{X'_{p+1}} \bigg( \cdots \bigg(\sum_{X'_{N-1}} \bigg(\sum_{X'_N} z_{X_1 \cdots X_{p-1} X'_p \cdots X'_N} \bigg)^{\omega_N} \bigg)^{\omega_{N-1}} \cdots \bigg)^{\omega_{p+2}} \bigg)^{\omega_{p+1}} \bigg)^{\omega_{p}-1}
$$
where $z_{X_1 \cdots X_N} = m_d^{\omega_{N+1} S \left(\Omega_{X_1 \cdots X_N} \right)}$ and $z_\varnothing = m_d^{\frac{S(\Omega)}{\omega_1}}$. It remains to optimize the conditional measures on the subtrees $\Omega_{X_1 \cdots X_N}$, by maximizing the expression $S \left(\Omega_{X_1 \cdots X_N},\mu_{X_1 \cdots X_N} \right)$ which is equal to
\begin{align*}
\begin{split}
&\sum_{i=N-d+1}^N \delta_i H^{\mu_{X_1 \cdots X_N}}_{m_d} \left(\beta_i \left(\Omega_{X_1 \cdots X_N} \right) \right) + \frac{1}{q} \sum_{X_{N+1},\ldots,X_{N+d}} \frac{\theta_{X_1 \cdots X_{N+d}}}{\theta_{X_1 \cdots X_N}} S \left( \Omega_{X_1 \cdots X_{N+d}},\mu_{X_1 \cdots X_{N+d}} \right) \\&=\widetilde{\omega}_{N-d+1} \bigg(-\sum_{X_{N+1}} \frac{\theta_{X_1 \cdots X_{N+1}}}{\theta_{X_1 \cdots X_N}} \log_{m_d} \bigg(\frac{\theta_{X_1 \cdots X_{N+1}}}{\theta_{X_1 \cdots X_N}} \bigg) \\&+ \omega_{N-d+2} \sum_{X_{N+1}} \frac{\theta_{X_1 \cdots X_{N+1}}}{\theta_{X_1 \cdots X_N}} \bigg( \cdots \\&+ \omega_{N} \sum_{X_{N+d-1}} \frac{\theta_{X_1 \cdots X_{N+d-1}}}{\theta_{X_1 \cdots X_{N+d-2}}} \bigg( -\sum_{X_{N+d}} \frac{\theta_{X_1 \cdots X_{N+d}}}{\theta_{X_1 \cdots X_{N+d-1}}} \log_{m_d} \left(\frac{\theta_{X_1 \cdots X_{N+d}}}{\theta_{X_1 \cdots X_{N+d-1}}} \right) \\&+ \omega_{N+1} \sum_{X_{N+d}} \frac{\theta_{X_1 \cdots X_{N+d}}}{\theta_{X_1 \cdots X_{N+d-1}}} S\left(\Omega_{X_1 \cdots X_{N+d}},\mu_{X_1 \cdots X_{N+d}}\right) \bigg) \cdots \bigg) \bigg) 
\end{split}
\end{align*}
and repeating the argument for the entire graphs. This yields the desired results.

\end{proof}

\begin{theorem}
Let $\mu$ be the Borel probability measure on $\Omega$ defined in the last theorem, and let $\mathbb{P}_\mu$ be the corresponding probability measure on $X_\Omega$. Let $x \in X_\Omega$. Then 
$$
\liminf_{n \rightarrow \infty} \frac{-\log_{m_d}(\mathbb{P}_\mu(B_n(x)))}{L_d \circ \cdots \circ L_1(n)} \leq \omega_1 \log_{m_d}(t_\varnothing).
$$
Using Theorem \ref{calme} we deduce that 
$$
\dim_H(X_\Omega) = \omega_1 \log_{m_d}(t_\varnothing) = \frac{q-1}{q} \log_{m_d}(t_\varnothing).
$$
\end{theorem}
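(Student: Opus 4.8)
The plan is to run, in the general setting, the argument of the two-dimensional upper bound (the theorem of Subsection~\ref{24}), now fed by the explicit optimal measure~\eqref{optimal} and the mass formula~\eqref{masse}. The matching lower bound $\dim_H(X_\Omega)\ge\omega_1\log_{m_d}(t_\varnothing)$ is already at hand, since the measure $\mathbb{P}_\mu$ attached to the $\mu$ of the previous theorem is supported on $X_\Omega$ and has that Hausdorff dimension. Hence, by the ($d$-dimensional form of the) third bullet of Theorem~\ref{calme} --- and because $L_d\circ\cdots\circ L_1(n)/n\to(\gamma_2\cdots\gamma_d)^{-1}$ while $\log_{m_1}=\gamma_2\cdots\gamma_d\cdot\log_{m_d}$, so that $-\frac{\log_{m_1}(\mathbb{P}_\mu(B_n(x)))}{n}$ and $-\frac{\log_{m_d}(\mathbb{P}_\mu(B_n(x)))}{L_d\circ\cdots\circ L_1(n)}$ have the same $\liminf$ --- it suffices to prove the displayed $\liminf$ bound for every $x\in X_\Omega$. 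As in Subsection~\ref{24}, since we only need an upper bound on a $\liminf$, we may restrict to the subsequence $n=q^\ell r$ with $\ell\ge p_1+1$ fixed and $r\to\infty$.

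The core is to expand $-\log_{m_d}(\mathbb{P}_\mu(B_n(x)))$ as a sum of $-\log_{m_d}$ of $\mu$-masses of generalized cylinders, one per orbit index $i\le L_d\circ\cdots\circ L_1(n)$ with $q\nmid i$, the shape of each being dictated by~\eqref{masse}, and then substitute~\eqref{optimal} for each such $\mu$-mass and develop the product. Every factor of~\eqref{optimal} is either a bare value $t_{X_1\cdots X_m}$ (or a reciprocal power of one, with exponent assembled from the $\omega_k$) or a nested ``partial-sum'' factor $\big(\sum_{X'}(\cdots)^{\omega}\cdots\big)^{\omega_p-1}$. Grouping, over all admissible $i$, the factors that carry a common block length and invoking the recursive defining equation for $t$ from the previous theorem together with the arithmetic of the $\omega_k$, one sees the development telescopes exactly as in the computation displayed just before Lemma~\ref{combi} for $d=2$: the only free contribution that survives is one copy of $t_\varnothing^{-1}$ per index $i$ with $q\nmid i$. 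Introducing, for each $\omega_k$, a Ces\`aro average $u^k_n=\frac1n\sum_{\kappa=1}^n R_k(\kappa)$ (where $R_k(\kappa)$ is the $k$-th type of $\log_{m_d}$-term evaluated at $\kappa$), each of which is nonnegative, uniformly bounded because $t\in\big[1,m_d^{\omega_{N+1}d}\big]$, and satisfies $u^k_{n+1}-u^k_n\to0$, one obtains for $n=q^\ell r$ a closed form for $\log_{m_d}(\mathbb{P}_\mu(B_n(x)))$ as a fixed finite linear combination of the $u^k$ at arguments $\big\lfloor L_d\circ\cdots\circ L_1(n)/q^{a}\big\rfloor$, minus $\#\{i\le L_d\circ\cdots\circ L_1(n):q\nmid i\}\cdot\log_{m_d}(t_\varnothing)$. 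Justifying that the infinitely many factors genuinely collapse to this finite form is precisely the computation deferred in Subsection~\ref{24}, now to be carried out in full; the very deep blocks produce no residual error, since the exponents built from the $\omega_k$ stay bounded and $t\ge1$.

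Dividing by $L_d\circ\cdots\circ L_1(n)$ and using $\#\{i\le L_d\circ\cdots\circ L_1(n):q\nmid i\}/(L_d\circ\cdots\circ L_1(n))\to\frac{q-1}{q}$ together with the uniform estimate $\big|\#\{i\in\,]a,b]:q\nmid i\}-\tfrac{q-1}{q}(b-a)\big|\le C$ to absorb the floor discrepancies into an error $\epsilon$, one reaches (as in the ``$\le$'' display of Subsection~\ref{24}) an upper bound of the form $\big(\text{finite sum of differences }u^k_{\lfloor\cdot/q^{a}\rfloor}-u^k_{\lfloor\cdot/q^{b}\rfloor}\big)+\frac{q-1}{q}\log_{m_d}(t_\varnothing)+\epsilon$. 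Applying Lemma~\ref{combi} to each $u^k$ along $r\to\infty$ forces the $\liminf$ of each such difference to be $\le 0$; passing to a single subsequence of $r$ realizing this simultaneously for the finitely many pairs occurring shows $\liminf_r\le\frac{q-1}{q}\log_{m_d}(t_\varnothing)+\epsilon$. Letting $\epsilon\to0$ gives the claimed $\liminf$ bound, whence $\dim_H(X_\Omega)=\omega_1\log_{m_d}(t_\varnothing)$; and $\omega_1=\frac{q-1}{q}$ because the weights $\delta^{s,t}_p$ sum, over all $s,t,p$, to $\lim_n\#\{i\le L_d\circ\cdots\circ L_1(n):q\nmid i\}/(L_d\circ\cdots\circ L_1(n))=\frac{q-1}{q}$, a total that is preserved by the conditioning used to pass to~\eqref{S}.

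The step I expect to be the main obstacle is the telescoping bookkeeping inside the development of~\eqref{optimal}: one must pin down, for every index $i$ with $q\nmid i$ and every block length, which $t$-factor and which partial-sum factor that index contributes, and check that the $\omega_k$-built exponents dovetail across consecutive block lengths so that everything except the $t_\varnothing^{-1}$ terms cancels. This is exactly where the $2d-3$ parameters $j_t,n_t$, the auxiliary integers $p^{s,t}_k$, and the permutations $\sigma_s$ all enter at once, and where the combinatorics genuinely exceed those of the two-dimensional case.
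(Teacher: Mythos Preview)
Your plan is essentially the paper's, and the overall strategy is correct. Two points deserve correction, though.

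First, the arguments at which the Ces\`aro averages $u^k$ are evaluated in the closed form for $-\log_{m_d}(\mathbb{P}_\mu(B_n(x)))$ are not all of the shape $\big\lfloor L_d\circ\cdots\circ L_1(n)/q^a\big\rfloor$. They are the terms $\lfloor\phi_k(n)\rfloor$, where $(\phi_k(n))_{k\ge0}$ is the decreasing enumeration of the set $\bigcup_{j=1}^d\{L_j\circ\cdots\circ L_1(n)/q^r:r\in\mathbb{N}\}$; these mix all the $L_j$'s, not only $L_d$. The key structural fact that makes the grouping work is the periodicity $\phi_{N-k+rd}(n)=\phi_{N-k}(n)/q^r$, and the asymptotics that feed into Lemma~\ref{combi} are $\lfloor\phi_{k+1}(n)\rfloor\sim\omega_{k+2}\lfloor\phi_k(n)\rfloor$, which one verifies by computing each $\omega_k$ explicitly from the $\delta^{s,t}_p$.

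Second, Lemma~\ref{combi} is stated for a \emph{sum} of differences $\sum_j(u^j_{\phi_j(n)}-u^j_{\psi_j(n)})$ and should be invoked once on the full finite combination. Applying it to each $u^k$ separately yields only individual $\liminf\le0$ statements, which do not combine by ``passing to a single subsequence'' without essentially reproving the lemma; the simultaneous subsequence is exactly what Lemma~\ref{combi} (via \cite[Lemma~5.4]{ref5}) provides.
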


\begin{proof}

Let $\Lambda(n) =  \bigcup\limits_{k=1}^d \left\{\frac{L_k \circ \cdots \circ L_1(n)}{q^r} : r \in \mathbb{N} \right\}$. We can reorder the elements of $\Lambda(n)$ as the following increasing sequence :\newpage
\begin{align*}
\begin{split}
&L_d \circ \cdots \circ L_1(n) \geq  \frac{L_d \circ \cdots \circ L_1(n)}{q} \geq \cdots \geq \frac{L_d \circ \cdots \circ L_1(n)}{q^{p_{d-1}}} \geq L_{d-1} \circ \cdots \circ L_1(n) \geq \\& \frac{L_d \circ \cdots \circ L_1(n)}{q^{p_{d-1}+1}} \geq
 \frac{L_{d-1} \circ \cdots \circ L_1(n)}{q} \geq \cdots \geq \frac{L_{d-1} \circ \cdots \circ L_1(n)}{q^{p_{d-2}-p_{d-1}-1}} \geq \frac{L_d \circ \cdots \circ L_1(n)}{q^{p_{d-2}}} \geq \\& \frac{L_{\sigma_{d-2}(d-1)} \circ \cdots \circ L_1(n)}{q^{p_{d-2}-p_{\sigma_{d-2}(d-1)}}} \geq 
\frac{L_{\sigma_{d-2}(d-2)} \circ \cdots \circ L_1(n)}{q^{p_{d-2}-p_{\sigma_{d-2}(d-2)}}} \geq \frac{L_d \circ \cdots \circ L_1(n)}{q^{p_{d-2}+1}} \geq \cdots \geq \\&  \frac{L_{\sigma_{d-2}(d-1)} \circ \cdots \circ L_1(n)}{q^{p_{d-3}-p_{\sigma_{d-2}(d-1)}-1}} \geq \frac{L_{\sigma_{d-2}(d-2)} \circ \cdots \circ L_1(n)}{q^{p_{d-3}-p_{\sigma_{d-2}(d-2)}-1}} \geq 
\frac{L_d \circ \cdots \circ L_1(n)}{q^{p_{d-3}}} \geq \cdots \geq \\& \frac{L_d \circ \cdots \circ L_1(n)}{q^{p_{1}}} \geq \frac{L_{\sigma_{1}(d-1)} \circ \cdots \circ L_1(n)}{q^{p_{1}-p_{\sigma_{1}(d-1)}}} \geq \cdots \geq \frac{L_{\sigma_{1}(1)} \circ \cdots \circ L_1(n)}{q^{p_{1}-p_{\sigma_{1}(1)}}} \geq 
 \frac{L_d \circ \cdots \circ L_1(n)}{q^{p_{1}+1}} \geq \\& \frac{L_{\sigma_{1}(d-1)} \circ \cdots \circ L_1(n)}{q^{p_{1}-p_{\sigma_{1}(d-1)}+1}} \geq \cdots \geq \frac{L_{\sigma_{1}(1)} \circ \cdots \circ L_1(n)}{q^{p_{1}-p_{\sigma_{1}(1)}+1}} \geq \frac{L_d \circ \cdots \circ L_1(n)}{q^{p_{1}+2}} \geq \cdots
\end{split}
\end{align*}
We denote by $$\phi_0(n) = L_d \circ \cdots \circ L_1(n) \geq \phi_1(n) \geq \phi_2(n) \geq \ldots$$ this sequence, which is valid for all $n$. Observe that $\phi_N(n) = \frac{L_d \circ \cdots \circ L_1(n)}{q^{p_{1}+1}}$. We now fix $n \geq 1$. Let $S \geq 0$ be the unique integer such that we have
$$
\phi_S(n) < 1 \leq \phi_{S-1}(n) \leq \cdots \leq \phi_0(n).
$$
We can write $S = N + Md +R$, with $M \geq 0$ and $R \in \llbracket 0, d-1 \rrbracket$. Recall formula \eqref{masse}. With these notations we get that
\begin{equation} \label{formule}
\mathbb{P}_\mu(B_n(x)) = \prod_{k=1}^S \ \prod_{\substack{\phi_k(n) < i \leq \phi_{k-1}(n) \\ q \nmid i}} \mu \left( \left[ \chi_1 \left(x|_{J_i} \right) \cdots \chi_k\left(x|_{J_i} \right) \right] \right).
\end{equation}
Now, for all $1 \leq k \leq N-1$, we have
\begin{align} \label{liste2}
\begin{split}
&\mu \left( \left[ \chi_1 \left(x|_{J_i} \right) \cdots \chi_k\left(x|_{J_i} \right) \right] \right) \\&= \frac{1}{t_\varnothing} \prod_{p=2}^k \bigg( \sum_{\chi_p\left(x|_{J_i}\right)'} \cdots \bigg(\sum_{\chi_N \left(x|_{J_i}\right)'} t_{\chi_1 \left(x|_{J_i}\right) \cdots \chi_{p-1}\left(x|_{J_i}\right) \chi_p\left(x|_{J_i}\right)' \cdots \chi_N \left(x|_{J_i}\right)'} \bigg)^{\omega_N} \cdots \bigg)^{\omega_p-1} \\&\boldsymbol{\cdot} \bigg(\sum_{\chi_{k+1}(x|_{J_i})'} \cdots \bigg( \sum_{\chi_N(x|_{J_i})'} t_{\chi_1(x|_{J_i}) \cdots \chi_{k}(x|_{J_i}) \chi_{k+1}(x|_{J_i})' \cdots \chi_N(x|_{J_i})'} \bigg)^{\omega_N} \cdots \bigg)^{\omega_{k+1}}.
\end{split}
\end{align}
If $R=0$ then 
\begin{align*}  
\begin{split}
&\mu \left( \left[ \chi_1 \left(x|_{J_i} \right) \cdots \chi_{N+Md}\left(x|_{J_i} \right) \right] \right) \\&= \mu \left( \left[ \chi_1 \left(x|_{J_i} \right) \cdots \chi_N\left(x|_{J_i} \right) \right] \right) \boldsymbol{\cdot} \prod_{r=1}^{M} \bigg[t_{\chi_1 \left(x|_{J_i}\right) \cdots \chi_{N+rd}\left(x|_{J_i}\right)} \ t_{\chi_1\left(x|_{J_i}\right) \cdots \chi_{N+(r-1)d}\left(x|_{J_i}\right)}^{-\frac{1}{\tilde{\omega}_{N-d+1} \omega_{N+1}}} \\ &\prod_{p=2}^d\bigg(\sum_{\chi_{N+(r-1)d+p}\left(x|_{J_i}\right)'} \bigg( \cdots \bigg(\sum_{\chi_{N+rd}\left(x|_{J_i}\right)'} t_{\chi_1\left(x|_{J_i}\right) \cdots \chi_{N+rd}\left(x|_{J_i}\right)'} \bigg)^{\omega_N} \cdots \bigg)^{\omega_{N-d+p+1}} \bigg)^{\omega_{N-d+p}-1}\bigg],
\end{split}
\end{align*}
and if $R \in \llbracket{1,d-1}\rrbracket$ then 
\small
\begin{align} \label{liste}
\begin{split}
&\mu \left( \left[ \chi_1 \left(x|_{J_i} \right) \cdots \chi_{N+Md+R}\left(x|_{J_i} \right) \right] \right) \\&= \mu \left( \left[ \chi_1 \left(x|_{J_i} \right) \cdots \chi_N\left(x|_{J_i} \right) \right] \right) \boldsymbol{\cdot} \prod_{r=1}^{M} \bigg[t_{\chi_1 \left(x|_{J_i}\right) \cdots \chi_{N+rd}\left(x|_{J_i}\right)} \ t_{\chi_1\left(x|_{J_i}\right) \cdots \chi_{N+(r-1)d}\left(x|_{J_i}\right)}^{-\frac{1}{\tilde{\omega}_{N-d+1} \omega_{N+1}}} \\ &\prod_{p=2}^d\bigg(\sum_{\chi_{N+(r-1)d+p}\left(x|_{J_i}\right)'} \bigg( \cdots \bigg(\sum_{\chi_{N+rd}\left(x|_{J_i}\right)'} t_{\chi_1\left(x|_{J_i}\right) \cdots \chi_{N+rd}\left(x|_{J_i}\right)'} \bigg)^{\omega_N} \cdots \bigg)^{\omega_{N-d+p+1}} \bigg)^{\omega_{N-d+p}-1}\bigg] \\& \boldsymbol{\cdot} t_{\chi_1\left(x|_{J_i}\right) \ldots \chi_{N+Md}\left(x|_{J_i}\right)}^{-({\tilde{\omega}_{N-d+1} \omega_{N+1}})^{-1}} \\& \boldsymbol{\cdot} \prod_{p=2}^{R} \bigg(\sum_{\chi_{N+Md+p}\left(x|_{J_i}\right)'} \bigg( \cdots \bigg(\sum_{\chi_{N+(M+1)d}\left(x|_{J_i}\right)'} t_{\chi_1\left(x|_{J_i}\right) \cdots \chi_{N+(M+1)d}\left(x|_{J_i}\right)'} \bigg)^{\omega_N} \cdots \bigg)^{\omega_{N-d+p+1}} \bigg)^{\omega_{N-d+p}-1} \\&\boldsymbol{\cdot} \bigg(\sum_{\chi_{N+Md+R+1}(x|_{J_i})'} \cdots \bigg( \sum_{\chi_{N+(M+1)d}(x|_{J_i})'} t_{\chi_1(x|_{J_i}) \cdots \chi_{N+(M+1)d}(x|_{J_i})'} \bigg)^{\omega_N} \cdots \bigg)^{\omega_{N-d+R+1}}.
\end{split}
\end{align}
\normalsize
Observe that for $k \in \llbracket 0,d \rrbracket$ and $r \in \mathbb{N}$ we have $\phi_{N-k+rd}(n) = \frac{\phi_{N-k}(n)}{q^r}$. Thus for $k \in \llbracket 0,d-1 \rrbracket$ and $r \in \mathbb{N}$
\begin{equation} \label{phi}
\phi_{N-k+rd}(n) < i \leq \phi_{N-k+rd-1}(n) \Longleftrightarrow \phi_{N-k}(n) < q^r i \leq \phi_{N-k-1}(n).
\end{equation}
Now we can develop the expression \eqref{formule} of $\mathbb{P}_\mu(B_n(x))$ and group together the terms with the same number of sums. We get $t_\varnothing^{-1}$ for all $1 \leq i \leq \phi_0(n)$ such that $q \nmid i$; using property \eqref{phi} we get
$$\prod\limits_{r=0}^M \ \prod\limits_{\substack{1 \leq i \leq \phi_{N+rd-1}(n) \\ q \nmid i}} t_{\chi_1(x|_{J_i}) \cdots \chi_{N+rd}(x|_{J_i})} = \prod\limits_{\substack{\kappa = q^r i \leq \phi_{N-1}(n) \\ q \nmid i}} t_{\chi_1(x|_{J_i}) \cdots \chi_{N+rd}(x|_{J_i})},$$ and $$\prod\limits_{r=0}^M \ \prod\limits_{\substack{1 \leq i \leq \phi_{N+rd}(n) \\ q \nmid i}} t_{\chi_1(x|_{J_i}) \cdots \chi_{N+rd}(x|_{J_i})}^{-(\tilde{\omega}_{N-d+1} \omega_{N+1})^{-1}} = \prod\limits_{\substack{\kappa = q^r i \leq \phi_{N}(n) \\ q \nmid i}} t_{\chi_1(x|_{J_i}) \cdots \chi_{N+rd}(x|_{J_i})}^{-(\tilde{\omega}_{N-d+1} \omega_{N+1})^{-1}};$$ 
we gather the product of terms coming from \eqref{liste2} with $k \in \llbracket 1,N-d \rrbracket$ to get
\begin{align*}
\begin{split}
&\prod\limits_{k=1}^{N-d} \prod\limits_{\phi_k(n) < i \leq \phi_{k-1}(n)} \bigg(\sum_{\chi_{k+1}(x|_{J_i})'} \cdots \bigg( \sum_{\chi_N(x|_{J_i})'} t_{\chi_1(x|_{J_i}) \cdots \chi_{k}(x|_{J_i}) \chi_{k+1}(x|_{J_i})' \cdots \chi_N(x|_{J_i})'} \bigg)^{\omega_N} \cdots \bigg)^{\omega_{k+1}} \\&= \prod_{p=d+1}^{N} \ \prod_{\substack{\phi_{N-p+1}(n) < i \leq \phi_{N-p}(n) \\ q \nmid i}} \ \bigg(\sum_{\chi_{N-p+2}(x|_{J_i})'} \cdots \bigg(\sum_{\chi_{N}(x|_{J_i})'} t_{\chi_1(x|_{J_i}) \cdots \chi_{N}(x|_{J_i})'} \bigg)^{\omega_N} \cdots \bigg)^{\omega_{N-p+2}};
\end{split}
\end{align*}
and similarly
\small
\begin{align*}
\begin{split}
&\prod\limits_{k=N-d+1}^{N-1} \prod\limits_{\phi_k(n) < i \leq \phi_{k-1}(n)} \bigg(\sum_{\chi_{k+1}(x|_{J_i})'} \cdots \bigg( \sum_{\chi_N(x|_{J_i})'} t_{\chi_1(x|_{J_i}) \cdots \chi_{k}(x|_{J_i}) \chi_{k+1}(x|_{J_i})' \cdots \chi_N(x|_{J_i})'} \bigg)^{\omega_N} \cdots \bigg)^{\omega_{k+1}} \\&= \prod\limits_{p=2}^{d} \ \prod\limits_{\substack{\phi_{N-p+1}(n) < i \leq \phi_{N-p}(n) \\ q \nmid i}} \ \bigg(\sum_{\chi_{N-p+2}(x|_{J_i})'} \cdots \bigg(\sum_{\chi_{N}(x|_{J_i})'} t_{\chi_1(x|_{J_i}) \cdots \chi_{N}(x|_{J_i})'} \bigg)^{\omega_N} \cdots \bigg)^{\omega_{N-p+2}},
\end{split}
\end{align*}
\normalsize
that we combine with 
\scalefont{0.78}\selectfont
\begin{align*}
\begin{split}
&\prod\limits_{k=0}^M \prod\limits_{R=1}^{d-1} \prod\limits_{\substack{\phi_{N+kd+R}(n) < i \leq \phi_{N+kd+R-1}(n) \\ q \nmid i}} \bigg(\sum\limits_{\chi_{N+kd+R+1}(x|_{J_i})'} \cdots \bigg( \sum\limits_{\chi_{N+(k+1)d}(x|_{J_i})'} t_{\chi_1(x|_{J_i}) \cdots \chi_{N+(M+1)d}(x|_{J_i})'} \bigg)^{\omega_N} \cdots \bigg)^{\omega_{N-d+R+1}}
\\&= \prod\limits_{r=1}^{M+1} \prod\limits_{p=2}^d \prod\limits_{\substack{\phi_{N+rd-p+1}(n) < i \leq \phi_{N+rd-p}(n) \\ q \nmid i}} \bigg(\sum_{\chi_{N+rd-p+2}(x|_{J_i})'} \cdots \bigg(\sum_{\chi_{N+rd}(x|_{J_i})'} t_{\chi_1(x|_{J_i}) \cdots \chi_{N+rd}(x|_{J_i})'} \bigg)^{\omega_N} \cdots \bigg)^{\omega_{N-p+2}},
\end{split}
\end{align*}
\normalsize
to get 
\small
$$
\prod_{p=2}^{d} \ \prod_{\substack{\phi_{N-p+1}(n) < \kappa = q^r i \leq \phi_{N-p}(n) \\ q \nmid i}} \ \bigg(\sum_{\chi_{N+rd-p+2}(x|_{J_i})'} \cdots \bigg(\sum_{\chi_{N+rd}(x|_{J_i})'} t_{\chi_1(x|_{J_i}) \cdots \chi_{N+rd}(x|_{J_i})'} \bigg)^{\omega_N} \cdots \bigg)^{\omega_{N-p+2}};
$$
\normalsize
finally we combine in a similar way all the remaining terms from the products \eqref{liste} and \eqref{liste2} and obtain
\begin{align*}
\begin{split}                                              
&\prod_{p=2}^{d} \ \prod_{\substack{\kappa = q^r i \leq \phi_{N-p+1}(n)\\ q \nmid i}} \ \bigg(\sum_{\chi_{N+rd-p+2}(x|_{J_i})'} \cdots \bigg(\sum_{\chi_{N+rd}(x|_{J_i})'} t_{\chi_1(x|_{J_i}) \cdots \chi_{N+rd}(x|_{J_i})'} \bigg)^{\omega_N} \cdots \bigg)^{\omega_{N-p+2}-1} \\& \boldsymbol{\cdot} \prod_{p=d+1}^{N} \ \prod_{\substack{i \leq \phi_{N-p+1}(n) \\ q \nmid i}} \ \bigg(\sum_{\chi_{N-p+2}(x|_{J_i})'} \cdots \bigg(\sum_{\chi_{N}(x|_{J_i})'} t_{\chi_1(x|_{J_i}) \cdots \chi_{N}(x|_{J_i})'} \bigg)^{\omega_N} \cdots \bigg)^{\omega_{N-p+2}-1}.
\end{split}
\end{align*}
Thus
\small
\begin{align*}
\begin{split}
&\mathbb{P}_\mu(B_n(x)) \\&= t_\varnothing^{-\# \{i \in \llbracket 1,\phi_0(n)\rrbracket, \ q \nmid i\}} \bigg(\prod_{\substack{\kappa = q^r i \leq \phi_{N-1}(n) \\ q \nmid i}} t_{\chi_1(x|_{J_i}) \cdots \chi_{N+rd}(x|_{J_i})} \bigg) \bigg(\prod_{\substack{\kappa = q^r i \leq \phi_{N}(n) \\ q \nmid i}} t_{\chi_1(x|_{J_i}) \cdots \chi_{N+rd}(x|_{J_i})}^{-(\tilde{\omega}_{N-d+1} \omega_{N+1})^{-1}} \bigg) \\& \boldsymbol{\cdot} \prod_{p=2}^{d} \ \prod_{\substack{\kappa = q^r i \leq \phi_{N-p+1}(n)\\ q \nmid i}} \ \bigg(\sum_{\chi_{N+rd-p+2}(x|_{J_i})'} \cdots \bigg(\sum_{\chi_{N+rd}(x|_{J_i})'} t_{\chi_1(x|_{J_i}) \cdots \chi_{N+rd}(x|_{J_i})'} \bigg)^{\omega_N} \cdots \bigg)^{\omega_{N-p+2}-1} \\& \boldsymbol{\cdot} \prod_{p=2}^{d} \ \prod_{\substack{\phi_{N-p+1}(n) < \kappa = q^r i \leq \phi_{N-p}(n) \\ q \nmid i}} \ \bigg(\sum_{\chi_{N+rd-p+2}(x|_{J_i})'} \cdots \bigg(\sum_{\chi_{N+rd}(x|_{J_i})'} t_{\chi_1(x|_{J_i}) \cdots \chi_{N+rd}(x|_{J_i})'} \bigg)^{\omega_N} \cdots \bigg)^{\omega_{N-p+2}} \\& \boldsymbol{\cdot} \prod_{p=d+1}^{N} \ \prod_{\substack{i \leq \phi_{N-p+1}(n) \\ q \nmid i}} \ \bigg(\sum_{\chi_{N-p+2}(x|_{J_i})'} \cdots \bigg(\sum_{\chi_{N}(x|_{J_i})'} t_{\chi_1(x|_{J_i}) \cdots \chi_{N}(x|_{J_i})'} \bigg)^{\omega_N} \cdots \bigg)^{\omega_{N-p+2}-1} \\& \boldsymbol{\cdot} \prod_{p=d+1}^{N} \ \prod_{\substack{\phi_{N-p+1}(n) < i \leq \phi_{N-p}(n) \\ q \nmid i}} \ \bigg(\sum_{\chi_{N-p+2}(x|_{J_i})'} \cdots \bigg(\sum_{\chi_{N}(x|_{J_i})'} t_{\chi_1(x|_{J_i}) \cdots \chi_{N}(x|_{J_i})'} \bigg)^{\omega_N} \cdots \bigg)^{\omega_{N-p+2}}.
\end{split}
\end{align*}
\normalsize
For $\kappa = q^r i$ with $q \nmid i$, let
\begin{align*}
\begin{split}
R_1(\kappa) &= \log_{m_d}\left(t_{\chi_1(x|_{J_i}) \cdots \chi_{N+rd}(x|_{J_i})}\right),\\ R_2(\kappa) &= \log_{m_d} \bigg(\sum_{\chi_{N+rd}(x|_{J_i})'} t_{\chi_1(x|_{J_i}) \cdots \chi_{N+rd}(x|_{J_i})'}\bigg),
\end{split}
\end{align*}
and for $p \in \llbracket 3,d \rrbracket$, let
$$
R_{p}(\kappa) = \log_{m_d} \bigg(\sum_{\chi_{N+rd-p+2}(x|_{J_i})'} \bigg( \cdots \bigg(\sum_{\chi_{N+rd}(x|_{J_i})'} t_{X_1(x|_{J_i}) \cdots \chi_{N+rd}(x|_{J_i})'} \bigg)^{\omega_N} \cdots \bigg)^{\omega_{N-p+3}}\bigg).
$$
For $p \in \llbracket 1,d \rrbracket$ and $n \geq 1$ let
$$
u^p_n = \frac{1}{n} \sum_{\kappa=1}^n R_p(\kappa) 
$$
and for $p \in \llbracket d+1,N \rrbracket$ let
$$
u^p_n = \frac{1}{n} \sum_{i \leq n, \ q \nmid i} \log_{m_d} \bigg(\sum_{\chi_{N-p+2}(x|_{J_i})'} \bigg( \cdots \bigg(\sum_{\chi_{N}(x|_{J_i})'} t_{\chi_1(x|_{J_i}) \cdots \chi_{N}(x|_{J_i})'} \bigg)^{\omega_N} \cdots \bigg)^{\omega_{N-p+3}} \bigg).
$$
This gives us $N$ bounded sequences. We can now write 
\begin{align*}
\begin{split}
-\log_{m_d}(\mathbb{P}_\mu(B_n(x))) &= (\tilde{\omega}_{N-d+1} \omega_{N+1})^{-1} \lfloor \phi_N(n) \rfloor u^1_{\lfloor \phi_N(n) \rfloor} -  \lfloor\phi_{N-1}(n) \rfloor u^1_{\lfloor \phi_{N-1}(n)\rfloor} \\&+ \sum_{k=0}^{N-2} \left(\lfloor \phi_{k+1}(n)\rfloor u^{N-k}_{\lfloor\phi_{k+1}(n)\rfloor}-\omega_{k+2} \lfloor\phi_k(n)\rfloor u^{N-k}_{\lfloor\phi_k(n)\rfloor}\right) \\ &+ \# \{i \in \llbracket 1,\phi_0(n) \rrbracket \} \log_{m_d}(t_\varnothing).
\end{split}
\end{align*}
Furthermore some basic recursive computations give us the values of the exponents
\begin{align*}
\begin{split}
\omega_1 &= \sum_{p=1}^{j_d} \delta_p^{d,d-1} + \sum_{s=2}^{d-1} \sum_{p=p_s+1}^{p_{s-1}} \sum_{t=s-1}^{d-1} \delta_p^{s,t} + \sum_{p=p_1+1}^\infty \sum_{t=0}^{d-1} \delta_p^{1,t} = \frac{q-1}{q} \\&= \lim\limits_{n \rightarrow \infty} \frac{\# \left\{i \leq L_d \circ \cdots \circ L_1(n) : q \nmid i \right\}}{L_d \circ \cdots \circ L_1(n)},
\end{split}
\end{align*}
$$
\omega_2 = \frac{\omega_1-\delta_1}{\omega_1} = \lim\limits_{n \rightarrow \infty} \frac{\# \left\{i \leq \frac{L_d \circ \cdots \circ L_1(n)}{q} : q \nmid i \right\}}{\# \left\{i \leq L_d \circ \cdots \circ L_1(n) : q \nmid i \right\}} = \frac{1}{q}, \ \ \omega_3 = \frac{\omega_1 - \delta_1 - \delta_2}{\omega_1-\delta_1} = \frac{1}{q}, 
$$
$$
\omega_4 = \cdots = \omega_{p_{d-1}+2} = \frac{1}{q}, \ \ \omega_{p_{d-1}+3} = \gamma_d q^{p_{d-1}},
$$
$$
\omega_{p_{d-1}+4} = \frac{1}{\gamma_d q^{p_{d-1}+1}}, \ \ \ldots \ , \ \omega_k = \frac{\omega_1 - \sum\limits_{i=1}^{k-1} \delta_i}{\omega_1 - \sum\limits_{i=1}^{k-2} \delta_i} = \lim\limits_{n \rightarrow \infty} \frac{\# \left\{i \leq \phi_{k-1}(n) : q \nmid i \right\}}{\# \left\{i \leq \phi_{k-2}(n) : q \nmid i \right\}}, \ \ \ldots
$$
$$
\omega_N = q^{p_{\sigma_1(1)}-p_{\sigma_1(2)}} \frac{\prod_{i=\sigma_1(1)+1}^d \gamma_i}{\prod_{i=\sigma_1(2)+1}^d \gamma_i}, \ \ \omega_{N+1} = \frac{q^{p_1-p_{\sigma_1(1)}}}{(q-1) \prod_{\sigma_1(1)+1}^d \gamma_i}.
$$
This yields $$(\tilde{\omega}_{N-d+1} \omega_{N+1})^{-1} = q^{p_{\sigma_1(1)}+1} \prod_{i=\sigma_1(1)+1}^d \gamma_i$$ and the asymptotic equivalences $$\lfloor\phi_{N-1}(n)\rfloor \sim (\tilde{\omega}_{N-d+1} \omega_{N+1})^{-1} \lfloor \phi_N(n)\rfloor,$$
$$
\lfloor\phi_{k+1}(n)\rfloor \sim \omega_{k+2} \lfloor\phi_k(n)\rfloor
$$ 
for all $k \in \llbracket 0,N-2 \rrbracket$, when $n \rightarrow +\infty$. We conclude by using again lemma \ref{combi}.

\end{proof}

\begin{theorem}

For $n_1,\ldots,n_d \in \mathbb{N}$ let
$$
\textup{Pref}_{n_1,\ldots,n_d}(\Omega) = \left\{ u \in \prod_{i=1}^d (\llbracket 0,m_i-1 \rrbracket \times \cdots \times \llbracket 0,m_d-1 \rrbracket)^{n_i} : \Omega \cap \left[u\right] \neq \varnothing\right\}.
$$
We have
\begin{align*}
\begin{split}
\dim_M(X_\Omega) &= \sum_{p=1}^{j_d} \delta_p^{d,d-1} |\textup{Pref}_{0,\ldots,0,p}(\Omega)| \\&+ \sum_{s=2}^{d-1} \ \sum_{p=p_s+1}^{p_{s-1}} \ \sum_{t=s-1}^{d-1} \delta_p^{s,t} |\textup{Pref}_{0,\ldots,0,p-p_s^{s,t},p_s^{s,t}-p_{s+1}^{s,t},\ldots,p_{d-2}^{s,t}-p_{d-1}^{s,t},p_{d-1}^{s,t}}(\Omega)| \\&+ \sum_{p=p_1+1}^\infty \ \sum_{t=0}^{d-1} \delta^{1,t}_p |\textup{Pref}_{p-p_1^{1,t},p_1^{1,t}-p_{2}^{1,t},\ldots,p_{d-2}^{1,t}-p_{d-1}^{1,t},p_{d-1}^{1,t}}(\Omega)|.
\end{split}
\end{align*}
\end{theorem}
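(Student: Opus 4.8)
The plan is to run the argument of Theorem \ref{test} in dimension $d$, using the combinatorial apparatus of Subsection \ref{32} in place of its two--dimensional counterpart, and replacing everywhere the $\mu$--masses of the generalized cylinders by the cardinalities of the corresponding sets of admissible prefixes of $\Omega$; the maximizing measure plays no role here. Since the balls $B_n$ have ``radius'' $m_1^{-n}$ and a $B_n$--ball meeting $X_\Omega$ is described exactly by one admissible prefix of $X_\Omega$ of the shape it distinguishes, one has
$$
\underline{\dim}_M(X_\Omega) = \liminf_{n\to\infty} \frac{\log_{m_1}\left(\left|\textup{Pref}_{n,\,L_2(n)-n,\,\ldots,\,L_d\circ\cdots\circ L_1(n)-L_{d-1}\circ\cdots\circ L_1(n)}(X_\Omega)\right|\right)}{n},
$$
and $\overline{\dim}_M(X_\Omega)$ is given by the same expression with a $\limsup$. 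As in the proofs of Theorems \ref{test2} and \ref{test}, neither quantity changes if we restrict to $n=q^\ell r$ with $\ell\geq p_1+1$ fixed and $r\to\infty$, since passing from $n$ to the nearest smaller multiple of $q^\ell$ multiplies the ratio by a factor tending to $1$.

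I would then prove the lower bound. Because $x\in X_\Omega$ precisely when $x|_{J_i}\in\Omega$ for every $i$ with $q\nmid i$, and $\Omega$ is closed, a prefix of $X_\Omega$ of the shape cut out by $B_n$ is admissible if and only if, on every fiber $J_i$ ($q\nmid i$), its restriction is an admissible prefix of $\Omega$. Just as \eqref{masse} writes $\mathbb{P}_\mu(B_n(x))$ as a product of the $\mu(C^{s,t}_{p,i}(x))$ over the indices $i$ lying in the intervals $I^{s,t}_p$ (times a residual factor), the $B_n$--prefix of $X_\Omega$ amounts here to an independent choice, for each $i$ with $q\nmid i$ lying in some $I^{s,t}_p$, of an admissible prefix of $\Omega$ of the shape of $C^{s,t}_{p,i}$, whose number is $\left|\textup{Pref}_{0,\ldots,0,\,p-p^{s,t}_s,\,p^{s,t}_s-p^{s,t}_{s+1},\,\ldots,\,p^{s,t}_{d-1}}(\Omega)\right|$ for $s\geq 2$, and $\left|\textup{Pref}_{p-p^{1,t}_1,\,p^{1,t}_1-p^{1,t}_2,\,\ldots,\,p^{1,t}_{d-1}}(\Omega)\right|$ for $s=1$, together with the term $\left|\textup{Pref}_{0,\ldots,0,p}(\Omega)\right|$ coming from the intervals with $p\leq j_d$; fixing one admissible value on the residual positions (those in fibers with $i\leq L_d\circ\cdots\circ L_1(n)/q^\ell$) shows that $\left|\textup{Pref}_{n,\ldots}(X_\Omega)\right|$ is at least the product of these cardinalities raised to the respective powers $\#\{i\in I^{s,t}_p:q\nmid i\}$. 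Taking $\log_{m_1}$, dividing by $n$, and using $L_d\circ\cdots\circ L_1(n)/n\to(\gamma_2\cdots\gamma_d)^{-1}$, $\log_{m_1}(\cdot)=(\gamma_2\cdots\gamma_d)\log_{m_d}(\cdot)$, and the limits defining $\delta^{d,d-1}_p,\delta^{s,t}_p,\delta^{1,t}_p$ in Subsection \ref{32}, the ratio tends, as $r\to\infty$, to the claimed expression with the last sum truncated at $p=\ell$; letting $\ell\to\infty$ bounds $\underline{\dim}_M(X_\Omega)$ below by the full sum, which converges because $\delta^{1,t}_p$ decays geometrically while the cardinalities grow at most exponentially in $p$.

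For the upper bound I would argue symmetrically: the same fiber decomposition shows that $\left|\textup{Pref}_{n,\ldots}(X_\Omega)\right|$ is at most the very same product of cardinalities (with the same exponents $\#\{i\in I^{s,t}_p:q\nmid i\}$), multiplied by $(m_1 m_2\cdots m_d)^{d_n}$ to allow arbitrary digits on the $d_n$ residual coordinate positions, where $d_n$ is estimated, exactly as in \eqref{nombre} and as already carried out for the residual factor in Subsection \ref{32}, by $d_n\leq\frac{\ell+1}{q^\ell}L_d\circ\cdots\circ L_1(n)+C\,\frac{\ell(\ell+1)}{2}$. Dividing by $n$ and letting $r\to\infty$, this extra factor contributes at most $\frac{\ell+1}{q^\ell}\log_{m_d}(m_1 m_2\cdots m_d)$, so $\overline{\dim}_M(X_\Omega)$ is bounded above by the claimed sum truncated at $p=\ell$ plus $\frac{\ell+1}{q^\ell}\log_{m_d}(m_1 m_2\cdots m_d)$. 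Letting $\ell\to\infty$ annihilates this error, the two bounds coincide, and hence $\dim_M(X_\Omega)$ exists and equals the stated value.

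No essentially new idea is needed beyond those of Subsection \ref{32}: the whole content is that the Minkowski estimate mirrors the Hausdorff one with cardinalities in place of entropies and the maximizing measure discarded. The only points that require genuine care — and hence the step I expect to be the main obstacle — are verifying that the $B_n$--prefix of $X_\Omega$ factors, without redundancy, into the blocks of shape $C^{s,t}_{p,i}$ (which is immediate from the fiber--wise definition of $X_\Omega$) and checking that the residual count $d_n$ of \eqref{nombre} transfers verbatim; both are routine.
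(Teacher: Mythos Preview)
Your proposal is correct and follows precisely the route the paper intends: the paper's own proof consists of the single sentence ``The proof follows the same path as in the two-dimensional case'' (i.e.\ Theorem \ref{test}), and you have spelled out exactly that path, replacing the $\mu$--masses of the cylinders $C^{s,t}_{p,i}$ in \eqref{masse} by the corresponding prefix cardinalities and handling the residual count $d_n$ as in \eqref{nombre}.
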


\begin{proof}
The proof follows the same path as in the two-dimensional case. We leave it to the reader, along with the characterization of the equality case with the Hausdorff dimension.
\end{proof}
\newpage

\begin{appendix}

\pagenumbering{Roman}
\setcounter{page}{1}

\section{}\nonumber

\begin{lemma} \label{convexe}
Let $p_1,\ldots,p_m \geq 0$ with $\sum_{i=1}^m p_1 = 1$, and let $q_1,\ldots,q_m \in \mathbb{R}$. Then
$$
\sum_{i=1}^m p_i (-\log(p_i)+q_i) \leq \log \left( \sum_{i=1}^m e^{q_i} \right),
$$
with equality if and only if $p_i = \frac{e^{q_i}}{\sum_{j=1}^m e^{q_j}}$ for all $i$.
\end{lemma}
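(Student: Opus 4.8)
The plan is to recognize this statement as the Gibbs variational formula for $\log\sum_i e^{q_i}$, i.e.\ to deduce it from the concavity of $\log$ (equivalently, from the nonnegativity of relative entropy).

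First I would reduce to the case where all $p_i>0$, using the convention $0\log 0=0$, which is legitimate since $t\mapsto -t\log t$ extends continuously to $t=0$: the indices with $p_i=0$ contribute nothing to the left-hand side, while discarding the corresponding (strictly positive) terms $e^{q_i}$ only decreases the right-hand side. So it suffices to prove the inequality for the subfamily of indices with $p_i>0$; I would also record here that this strict decrease of the right-hand side is one of the two sources of the equality condition. With all $p_i>0$, write the left-hand side as $\sum_i p_i\log\!\big(e^{q_i}/p_i\big)$ and apply Jensen's inequality to the strictly concave function $\log$ with weights $p_i$:
$$
\sum_{i=1}^m p_i\log\!\Big(\frac{e^{q_i}}{p_i}\Big)\le \log\!\Big(\sum_{i=1}^m p_i\cdot\frac{e^{q_i}}{p_i}\Big)=\log\!\Big(\sum_{i=1}^m e^{q_i}\Big),
$$
which is the desired inequality. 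An equivalent and perhaps cleaner phrasing I might use instead: set $r_i=e^{q_i}/\sum_j e^{q_j}$, note that the claim is exactly $\sum_i p_i\log(p_i/r_i)\ge 0$, the nonnegativity of the Kullback--Leibler divergence $D(p\Vert r)$, and derive this from $\log t\le t-1$.

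For the equality case, strict concavity of $\log$ forces, when Jensen is an equality, that $e^{q_i}/p_i$ is independent of $i$ over the support of $p$, say equal to $c$; combining this with the requirement from the reduction step that no $p_i$ vanish, and with $\sum_i p_i=1$, yields $c=\sum_j e^{q_j}$, hence $p_i=e^{q_i}/\sum_j e^{q_j}$ for every $i$. Conversely this choice of $(p_i)$ plainly realizes equality.

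There is no genuine obstacle here; the only point demanding a little care is the bookkeeping around indices with $p_i=0$, both for the convention $0\log 0=0$ in the inequality and, more importantly, for pinning down the equality case, since a naive appeal to strict concavity alone would only give $p_i\propto e^{q_i}$ on the support of $p$ and not the full normalized formula.
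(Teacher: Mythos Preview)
Your proof is correct and complete; the Jensen/relative-entropy argument you give is the standard one, and your handling of the equality case (in particular the bookkeeping around indices with $p_i=0$) is careful and accurate. The paper itself does not prove this lemma but simply cites \cite[Corollary~1.5]{ref3}, so your write-up is in fact more self-contained than the paper's treatment.
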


\begin{proof}
See \cite[Corollary 1.5]{ref3}.\newline
\end{proof}

\begin{lemma} \label{proba}
Let $(\Omega, \mathcal{F}, \mathbb{P})$ be a probability space, $(m_n) \in (\mathbb{N}^*)^{\mathbb{N}^*}$ be a strictly increasing sequence such that $\sum_{n=1}^\infty \frac{1}{m_n^2} < +\infty$ and for all $n \geq 1$ let $(X_{i,n})_{i \in \llbracket 1,m_n \rrbracket}$ be a family of independent centered random variables on $(\Omega, \mathcal{F}, \mathbb{P})$. Assume that there exists $K \geq 0$ such that
$$
\forall n \in \mathbb{N}^*, \ \forall i \in \llbracket 1,m_n \rrbracket, \ \mathbb{E} \left[X_{i,n}^4 \right] \leq K.
$$
Then $\frac{1}{m_n} \sum_{i=1}^{m_n} X_{i,n} \xrightarrow[n \rightarrow \infty]{\text{a.s}} 0$.
\end{lemma}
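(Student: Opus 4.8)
The plan is to run the classical fourth-moment argument together with the Borel--Cantelli lemma. Set $S_n = \sum_{i=1}^{m_n} X_{i,n}$; the goal is to prove that $S_n/m_n \to 0$ almost surely. The first step is to bound $\mathbb{E}[S_n^4]$. Expanding,
$$
\mathbb{E}[S_n^4] = \sum_{1 \le i,j,k,\ell \le m_n} \mathbb{E}[X_{i,n} X_{j,n} X_{k,n} X_{\ell,n}],
$$
and by independence together with $\mathbb{E}[X_{i,n}] = 0$, a term vanishes unless every index that occurs does so at least twice. The surviving terms are the $m_n$ \emph{diagonal} terms, with all four indices equal, contributing $\sum_i \mathbb{E}[X_{i,n}^4] \le K m_n$, and the \emph{two-pair} terms, in which the four positions are split into two pairs (three ways) carrying distinct indices $i \ne j$, contributing $3 \sum_{i \ne j} \mathbb{E}[X_{i,n}^2]\,\mathbb{E}[X_{j,n}^2]$. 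Since $\mathbb{E}[X_{i,n}^2] \le (\mathbb{E}[X_{i,n}^4])^{1/2} \le \sqrt{K}$ by Jensen's inequality, this last sum is at most $3K m_n^2$. Hence $\mathbb{E}[S_n^4] \le K m_n + 3 K m_n^2 \le 4 K m_n^2$ (using $m_n \ge 1$), and therefore
$$
\mathbb{E}\!\left[\left(\frac{S_n}{m_n}\right)^{\!4}\right] \le \frac{4K}{m_n^2}.
$$

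The second step is to sum over $n$. By Tonelli's theorem,
$$
\mathbb{E}\!\left[\sum_{n=1}^\infty \left(\frac{S_n}{m_n}\right)^{\!4}\right] = \sum_{n=1}^\infty \mathbb{E}\!\left[\left(\frac{S_n}{m_n}\right)^{\!4}\right] \le 4K \sum_{n=1}^\infty \frac{1}{m_n^2} < +\infty
$$
by hypothesis, so the series $\sum_n (S_n/m_n)^4$ converges almost surely; in particular its general term tends to $0$, that is, $S_n/m_n \to 0$ almost surely. (Equivalently, one may apply Markov's inequality to obtain $\mathbb{P}(|S_n/m_n| > \varepsilon) \le 4K/(\varepsilon^4 m_n^2)$ for every $\varepsilon > 0$, deduce from $\sum_n m_n^{-2} < \infty$ and Borel--Cantelli that almost surely $|S_n/m_n| \le \varepsilon$ for all large $n$, and then let $\varepsilon = 1/k$ run over $\mathbb{N}^*$.)

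There is no genuine obstacle here: the only point requiring a little care is the combinatorial bookkeeping in expanding $\mathbb{E}[S_n^4]$ — namely checking that centering and independence annihilate every term except the $O(m_n)$ diagonal terms and the $O(m_n^2)$ two-pair terms — together with the observation that the hypothesis $\sum_n m_n^{-2} < \infty$ is precisely what makes the resulting series of expectations summable. The uniform bound on fourth moments is used twice: directly for the diagonal sum, and via Jensen to control the second moments appearing in the two-pair sum. Note that strict monotonicity of $(m_n)$ is not actually needed, only $m_n \ge 1$ and the summability of $m_n^{-2}$.
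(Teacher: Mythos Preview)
Your proof is correct and follows essentially the same route as the paper: a fourth-moment bound $\mathbb{E}[S_n^4] = O(m_n^2)$ via the diagonal/two-pair decomposition, then summability of $\mathbb{E}[(S_n/m_n)^4]$ using $\sum m_n^{-2} < \infty$ to conclude via monotone convergence (equivalently Tonelli) that the series of fourth powers is almost surely finite. The only cosmetic differences are your constant $4K$ versus the paper's $3K$ and your additional remark on the Borel--Cantelli alternative.
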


\begin{proof}
Fix $n \geq 1$. We have
\begin{align*}
\begin{split}
\mathbb{E} \left[ \left(\sum_{i=1}^{m_n} X_{i,n} \right)^4 \right] &= \mathbb{E} \left[ \sum_{i=1}^{m_n} X_{i,n}^4 + 6 \sum_{i<j} X_{i,n}^2 X_{j,n}^2 \right] \\ &\leq m_n K + 3 m_n (m_n-1) K \\ &\leq 3 K m_n^2
\end{split}
\end{align*}
by using independence and Jensen's inequality. Now $\sum_{n=1}^\infty \left( \frac{1}{m_n} \sum_{i=1}^{m_n} X_{i,n} \right)^4$ is a well-defined random variable taking values in $\mathbb{R}^+ \cup \{+\infty\}$. Moreover by the monotone convergence theorem 
$$
\mathbb{E} \left[\sum_{n=1}^\infty \left( \frac{1}{m_n} \sum_{i=1}^{m_n} X_{i,n} \right)^4 \right] = \sum_{n=1}^\infty \frac{1}{m_n^4} \mathbb{E} \left[ \left(\sum_{i=1}^{m_n} X_{i,n} \right)^4 \right] \leq 3K  \sum_{n=1}^\infty \frac{1}{m_n^2} < +\infty.
$$

Thus $\sum_{n=1}^\infty \left( \frac{1}{m_n} \sum_{i=1}^{m_n} X_{i,n} \right)^4 < +\infty$ a.s and $\frac{1}{m_n} \sum_{i=1}^{m_n} X_{i,n} \xrightarrow[n \rightarrow \infty]{\text{a.s}} 0$. \newline

\end{proof}

\begin{lemma} \label{combi}
Let $p \in \mathbb{N}^*$ and for $1 \leq j \leq p$ let $(u^j_n) \in \mathbb{R}^\mathbb{N}$ be $p$ bounded sequences with
$$
\lim_{n \rightarrow \infty} u^j_{n+1} - u^j_n = 0.
$$
For $j \in \llbracket 1,p \rrbracket$ let $\phi_j, \psi_j : \mathbb{N} \rightarrow \mathbb{N}$ be such that
$$
\exists c_j,r_j > 0, \ \exists A_j,B_j \in \mathbb{N}, \ \forall n, \ |\phi_j(n) - \lceil r_j n \rceil| \leq A_j \ \text{and} \ |\psi_j(n) - \lceil c_j n \rceil| \leq B_j.
$$
Then we have
$$
\liminf_{n \rightarrow \infty} \sum_{j=1}^p \left(u^j_{ \phi_j (n) } - u^j_{ \psi_j (n) } \right) \leq 0.
$$
\end{lemma}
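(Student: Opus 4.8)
The plan is to argue by contradiction after reducing everything to a single weighted average on a logarithmic scale.

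\emph{Step 1 (reduction to ceilings).} Write $d^j_m = u^j_{m+1} - u^j_m \to 0$. I would first observe that $|u^j_{\phi_j(n)} - u^j_{\lceil r_j n\rceil}|$ is a sum of at most $A_j$ consecutive $|d^j_m|$ with indices $\ge \lceil r_j n\rceil - A_j \to \infty$, hence $\le \eta^j_n$ for some $\eta^j_n \to 0$; likewise for $\psi_j$ and $c_j$. Thus replacing $\phi_j(n),\psi_j(n)$ by $\lceil r_j n\rceil,\lceil c_j n\rceil$ alters each summand by $o(1)$ and does not change the $\liminf$. I may also assume $r_j \le c_j$ for each $j$ (otherwise swap the two indices; if $r_j=c_j$ the $j$-th term vanishes identically).

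\emph{Step 2 (the key estimate).} The crux is: for a bounded sequence $(u_m)$ with $d_m:=u_{m+1}-u_m\to 0$ and any $0<r<c$,
$$\sum_{n=1}^M \frac{u_{\lceil rn\rceil}-u_{\lceil cn\rceil}}{n}=o(\log M).$$
To prove this I would write $u_{\lceil rn\rceil}-u_{\lceil cn\rceil}=-\sum_{m=\lceil rn\rceil}^{\lceil cn\rceil-1}d_m$, interchange the sums over $m$ and $n$, and note that the coefficient of $-d_m$ is $\sum\{1/n:1\le n\le M,\ m/c<n\le m/r\}$, which equals $\log(c/r)+O(c/m)$ for $m\le rM$ and is $\le \log(cM/m)+O(c/m)$ for $rM<m<cM$ (and is $0$ for $m\ge cM$). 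In the range $m\le rM$ the factor $\log(c/r)$ pulls out and $\sum_{m\le rM}d_m$ telescopes to $u_{\lceil rM\rceil+1}-u_1=O(1)$ by boundedness, while $\sum_m|d_m|\,O(c/m)=o(\log M)$ since $\sum_{m\le K}|d_m|/m=o(\log K)$ (standard, as $|d_m|\to 0$). In the range $rM<m<cM$, an Abel summation of $\sum_{rM<m<cM}d_m\log(cM/m)$ — using that the weight $\log(cM/m)$ has consecutive differences $O(1/m)$ and takes values $O(1)$ and $O(1/M)$ at the two endpoints, and that its partial sums $\sum_{m\le k}d_m$ are $O(1)$ — leaves only terms of size $O(1)$; the residual $O(c/m)$ part is $o(1)$ because $\sum_{rM<m<cM}|d_m|\le\big((c-r)M+O(1)\big)\sup_{m\ge rM}|d_m|=o(M)$. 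Hence the whole expression is $O(1)+o(\log M)=o(\log M)$.

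\emph{Step 3 (conclusion).} Summing Step 2 over the finitely many $j\in\llbracket 1,p\rrbracket$, and absorbing the reduction errors of Step 1 via $\sum_{n\le M}\eta^j_n/n=o(\log M)$, gives $\sum_{j=1}^p\sum_{n=1}^M\frac{u^j_{\phi_j(n)}-u^j_{\psi_j(n)}}{n}=o(\log M)$. If $\liminf_{n\to\infty}\sum_j(u^j_{\phi_j(n)}-u^j_{\psi_j(n)})\ge\delta>0$, pick $N_0$ with $\sum_j(u^j_{\phi_j(n)}-u^j_{\psi_j(n)})\ge\delta/2$ for all $n\ge N_0$; then the left-hand side above is at least $\frac{\delta}{2}\log M+O(1)$, contradicting the $o(\log M)$ bound for $M$ large. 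So the $\liminf$ is $\le 0$.

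\emph{Main obstacle.} The delicate point is that individual summands $u^j_{\phi_j(n)}-u^j_{\psi_j(n)}$ need not tend to $0$ (the index ratio $r_j/c_j$ is not $1$ in general), so no termwise argument works; and a naive telescoping along a geometric progression $n_i=\rho^i$ fails because the ratios $c_j/r_j$ are not simultaneously powers of a common base. The logarithmic weight $1/n$ is exactly what makes the reindexing in Step 2 scale-invariant — multiplying the index by a constant becomes a bounded boundary cost — and it is the combination of the telescoping of $\sum d_m$ with $\sum_{m\le K}|d_m|/m=o(\log K)$ that upgrades the bound from the trivial $O(\log M)$ to $o(\log M)$.
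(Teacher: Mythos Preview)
Your argument is correct. Step 1 is exactly the reduction the paper performs; the paper then simply invokes \cite[Lemma 5.4]{ref5} or \cite[Lemma 4.1]{ref10} for the ceiling version, whereas you supply a self-contained proof via logarithmic Ces\`aro averaging. Your Step 2 is in fact the discrete form of the standard argument behind those cited lemmas: passing to the scale $t=\log n$ (so that $1/n$ becomes $dt$), the translation-invariance of the integral makes $\frac{1}{T}\int_0^T\big(u^j_{\lceil e^{t+\log r_j}\rceil}-u^j_{\lceil e^{t+\log c_j}\rceil}\big)\,dt=O(1/T)$ by boundedness of $u^j$ alone, and the contradiction in Step 3 follows. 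Your write-up unpacks the discrete combinatorics (interchange of sums, harmonic-sum estimates, Abel summation on the boundary block) explicitly, which is more work but entirely elementary and avoids any black box.

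One cosmetic remark: the clause ``I may also assume $r_j\le c_j$'' is only a notational convenience for the telescoping in Step 2; since your $o(\log M)$ bound is symmetric in $r$ and $c$, no actual reduction is needed there, and the sentence could be dropped without loss.
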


\begin{proof}
Observe that for all $j$ and $n$ we have $\lceil r_j n \rceil \in \{\phi_j(n) + k, \ |k| \leq A_j\} $ and $\lceil c_j n \rceil \in \{\psi_j(n) + k, \ |k| \leq B_j\}$. Thus 
$$
|u^j_{\phi_j(n)}-u^j_{\lceil r_j n \rceil}| \leq \max_{|k| \leq A_j} |u^j_{\phi_j(n)}-u^j_{\phi_j(n)+k}| \underset{n\to \infty}{\longrightarrow} 0
$$
using the hypothesis on $u^j$ above. Similarly $|u^j_{\psi_j(n)}-u^j_{\lceil c_j n \rceil}| \underset{n\to \infty}{\longrightarrow} 0$. Now conclude with \cite[Lemma 5.4]{ref5} or \cite[Lemma 4.1]{ref10}.\newline
\end{proof}

\begin{lemma} \label{loc}
Let $\mu$ be a Borel probability measure on $\Sigma_{m_1,m_2}$. Suppose that $\mu$ is exact dimensional with respect to the metric 
$$
\tilde{d}((x_k,y_k)_{k=1}^\infty,(u_k,v_k)_{k=1}^\infty) = e^{-\min \left\{k \geq 1, \ (x_k,y_k) \neq (u_k,v_k)\right\}},
$$
with dimension $\delta$. Denote by $\delta_2$ the lower Hausdorff dimension of $\pi_* \mu$ with respect to the metric induced by $\tilde{d}$, and let $\underline{\delta_1}$ and $\overline{\delta_1}$ be the essential infimum and the essential supremum of the lower Hausdorff dimensions of the conditional measures $\mu^y$ with respect to $\tilde{d}$ again, where $\mu_y$ is obtained from the disintegration of $\mu$ with respect to $\pi_* \mu$. Then, with respect to the metric $d$, for $\mu$-almost every point $z$ we have
$$
\frac{\overline{\delta_1}}{\log(m_1)} + \frac{\delta_2}{\log(m_2)} \leq \underline{\dim}_{\text{loc}}(\mu,z) \leq \overline{\dim}_{\text{loc}}(\mu,z) \leq \frac{\delta}{\log(m_2)} - \left(\frac{1}{\log(m_2)} - \frac{1}{\log(m_1)} \right) \underline{\delta_1}.
$$

So, if $\underline{\delta_1} = \overline{\delta_1}$ and $\delta = \underline{\delta_1} + \delta_2$ then $\mu$ is exact dimensional with respect to $d$.
\end{lemma}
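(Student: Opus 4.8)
The plan is to read off, for $\mu$-a.e.\ $z=(x,y)$, the lower and upper local dimensions of $\mu$ with respect to $d$ from the masses $\mu(B_n(x,y))$ of the $d$-balls of radius $m_1^{-n}$, after decoupling each such mass into a contribution of $\pi_*\mu$ along the second coordinate and a contribution of the fibre measure $\mu^y$ along the first one. The first point is that $B_n(x,y)$ is exactly the generalised cylinder $\big[(x_1,y_1)\cdots(x_n,y_n)\,y_{n+1}\cdots y_{L(n)}\big]$, so the formula for $\nu^y$ recalled in Subsection~\ref{22} gives, for $\pi_*\mu$-a.e.\ $y$,
$$
\mu\big(B_n(x,y)\big)=\pi_*\mu\big(\big[y|_{L(n)}\big]\big)\cdot r_n(x,y),
$$
where $r_n(x,y):=\mu(B_n(x,y))\big/\pi_*\mu\big(\big[y|_{L(n)}\big]\big)$ is precisely the $L(n)$-th term of the sequence that converges to the fibre mass $\mu^y\big(\big[x|_n\big]\times\{y\}\big)$.

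The technical heart is to show that $r_n(x,y)$ is comparable to $\mu^y\big(\big[x|_n\big]\times\{y\}\big)$ on the exponential scale: for $\mu$-a.e.\ $(x,y)$ and every $\varepsilon>0$,
$$
e^{-\varepsilon n}\;\le\;\frac{r_n(x,y)}{\mu^y\big(\big[x|_n\big]\times\{y\}\big)}\;\le\;e^{\varepsilon n}\qquad\text{for all $n$ large enough.}
$$
For a fixed cylinder depth the $p$-th approximants converge to $\mu^y$ as $p\to\infty$ ($\pi_*\mu$-a.e.) by the reverse martingale convergence theorem; the delicate feature is that here the approximation level $p=L(n)$ and the cylinder depth $n$ go to infinity together, at the fixed ratio $1/\gamma$. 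I would obtain the two one-sided exponential bounds from the observation that, at any fixed depth, the ratio of the level-$p$ approximant to $\mu^y$ has $\mu^y$-integral equal to $1$ (and the reciprocal ratio has integral $1$ against the level-$p$ approximant), combined with Markov's inequality at the threshold $e^{\varepsilon n}$ and the Borel--Cantelli lemma after summing over $n$; the measurability and the a.e.\ identities needed to run the Fubini steps are the disintegration basics of \cite{ref11}. This is the step I expect to require the most care.

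Granting this comparison, one gets
$$
\frac{-\log_{m_1}\mu\big(B_n(x,y)\big)}{n}=\frac{-\log_{m_1}\pi_*\mu\big(\big[y|_{L(n)}\big]\big)}{n}+\frac{-\log_{m_1}\mu^y\big(\big[x|_n\big]\times\{y\}\big)}{n}+o(1).
$$
Since $L(n)=\lceil n/\gamma\rceil\sim n\log m_1/\log m_2$, the first summand is $\tfrac1{\log m_2}$ times $\tfrac{-\log\pi_*\mu([y|_{L(n)}])}{L(n)}$, whose $\liminf$ is $\underline{\dim}_{\text{loc}}(\pi_*\mu,y)\ge\delta_2$; the second summand is $\tfrac1{\log m_1}$ times $\tfrac{-\log\mu^y([x|_n]\times\{y\})}{n}$, whose $\liminf$ lies in $[\underline{\delta_1},\overline{\delta_1}]$ for $\mu$-a.e.\ $(x,y)$ (this is where the essential infimum and supremum defining $\underline{\delta_1},\overline{\delta_1}$, together with Fubini, enter). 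Applying the same decoupling to the $\tilde d$-balls $\big[(x,y)|_n\big]$ and invoking the comparison on the diagonal shows that, for $\mu$-a.e.\ $(x,y)$, $\tfrac{-\log\pi_*\mu([y|_n])}{n}+\tfrac{-\log\mu^y([x|_n]\times\{y\})}{n}\to\delta$ by exact dimensionality of $\mu$ for $\tilde d$; this identity couples the base and fibre local dimensions, and feeding it into the displayed expression lets one sandwich $\liminf$ and $\limsup$ of $\tfrac{-\log_{m_1}\mu(B_n(x,y))}{n}$ between $\tfrac{\overline{\delta_1}}{\log m_1}+\tfrac{\delta_2}{\log m_2}$ and $\tfrac{\delta}{\log m_2}-\big(\tfrac1{\log m_2}-\tfrac1{\log m_1}\big)\underline{\delta_1}$. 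By Theorem~\ref{calme} (or directly from the definition of local dimension) this is the asserted chain of inequalities.

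Finally, if $\underline{\delta_1}=\overline{\delta_1}=:\delta_1$ and $\delta=\delta_1+\delta_2$, the lower and upper bounds just obtained both collapse to $\tfrac{\delta_1}{\log m_1}+\tfrac{\delta_2}{\log m_2}$, so $\dim_{\text{loc}}(\mu,z)$ exists and is $\mu$-a.e.\ constant, i.e.\ $\mu$ is exact dimensional with respect to $d$.
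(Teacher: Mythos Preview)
The paper does not give a self-contained argument here: it simply defers the lower inequality to the proof of Marstrand's theorem in \cite[Theorem~5.8]{ref13} and the upper inequality to \cite[Theorem~2.11]{ref14}. Your proposal is therefore automatically a different route, and the interesting question is whether your direct disintegration argument actually closes.

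There are two places where it does not. First, your Markov/Borel--Cantelli scheme for the exponential comparison $e^{-\varepsilon n}\le r_n/\mu^y([x|_n])\le e^{\varepsilon n}$ only works cleanly in one direction. The identity $\int R_n\,d\mu^y=1$ with $R_n=r_n/\mu^y([x|_n])$ does give $\mu(R_n>e^{\varepsilon n})\le e^{-\varepsilon n}$ and hence the upper half. For the lower half you invoke $\int R_n^{-1}\,dr_n=1$, but $r_n$ is a probability vector on the set of $n$-cylinders, not the restriction of $\mu$ (or $\mu^y$) to that $\sigma$-algebra; the resulting Markov bound controls $r_n(\{R_n^{-1}>e^{\varepsilon n}\})$, and on that event $\mu^y\ge e^{\varepsilon n}r_n$, so you cannot transfer the bound to $\mu$. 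This is precisely the step the cited references handle with a density/covering argument rather than a bare first-moment estimate.

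Second, even granting the decoupling, your extraction of the two endpoints is too quick. The displayed sum mixes scales: the base term lives at depth $L(n)$ while the fibre term lives at depth $n$, so the ``diagonal'' identity $\tfrac{-\log\pi_*\mu([y|_n])}{n}+\tfrac{-\log\mu^y([x|_n])}{n}\to\delta$ does not directly couple them. Moreover, the claim that the fibre $\liminf$ lies in $[\underline{\delta_1},\overline{\delta_1}]$ is false in general: for $\mu^y$-a.e.\ $x$ one only knows $\underline{\dim}_{\mathrm{loc}}(\mu^y,x)\ge\underline{\dim}_H(\mu^y)\ge\underline{\delta_1}$, with no a priori upper bound by $\overline{\delta_1}$. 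The appearance of $\overline{\delta_1}$ in the lower inequality is exactly what the Marstrand-type input supplies and is not recoverable from the naive fibrewise bound; your argument as written would at best yield $\underline{\delta_1}$ there. This does not affect the final exact-dimensionality conclusion (where $\underline{\delta_1}=\overline{\delta_1}$ is assumed), but it means the general chain of inequalities is not established by your sketch.
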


\begin{proof}
The first inequality follows from the proof of a result of Marstrand (see \cite[Theorem 5.8]{ref13}), while the second one can be deduced from the proof of \cite[Theorem 2.11]{ref14}.
\end{proof}

\end{appendix}

\newpage
\thispagestyle{empty}
\renewcommand{\thepage}{}
\bibliography{biblio2}

\begin{thebibliography}{10}

\bibitem{ref4}
T.~Bedford.
\newblock Crinkly curves, {M}arkov partitions and box dimension in self-similar
  sets. {P}h.{D}. {T}hesis, {U}niversity of {W}arwick.
\newblock 1984.

\bibitem{ref13}
K.~Falconer.
\newblock {\em The geometry of fractal sets}.
\newblock Cambridge University Press, 1985.

\bibitem{ref3}
K.~Falconer.
\newblock {\em Techniques in fractal geometry}.
\newblock Wiley, 1997.

\bibitem{ref7}
A.-H. Fan, L.~Liao, and J.-H. Ma.
\newblock Level sets of multiple ergodic averages.
\newblock {\em Monatsh. Math.}, 168:17--26, 2012.

\bibitem{ref14}
D.-J. Feng and H.~Hu.
\newblock Dimension theory of iterated function systems.
\newblock {\em Comm. Pure Appl. Math.}, 62:1435--1500, 2009.

\bibitem{ref5}
D.-J. Feng and W.~Huang.
\newblock Variational principle for weighted topological pressure.
\newblock {\em J. Math. Pures Appl.}, 106:411--452, 2016.

\bibitem{ref6}
H.~Furstenberg.
\newblock Disjointness in ergodic theory, minimal sets, and a problem in
  diophantine approximation.
\newblock {\em Math. Sys. Th.}, 1:1--49, 1967.

\bibitem{ref10}
R.~Kenyon and Y.~Peres.
\newblock Measures of full dimension on affine-invariant sets.
\newblock {\em Ergod. Th. and Dyn. Sys.}, 16:307--323, 1996.

\bibitem{ref2}
R.~Kenyon, Y.~Peres, and B.~Solomyak.
\newblock Hausdorff dimension for fractals invariant under multiplicative
  integers.
\newblock {\em Ergod. Th. and Dyn. Syst.}, 32:1567--1584, 2012.

\bibitem{ref1}
C.~McMullen.
\newblock Hausdorff dimension of general {S}ierpi\'nski carpets.
\newblock {\em Nagoya Math. J.}, 96:1--9, 1984.

\bibitem{ref11}
K.~Oliveira and M.~Viana.
\newblock {\em Foundations of ergodic theory}.
\newblock Cambridge University Press, 2016.

\bibitem{ref8}
Y.~Peres, J.~Schmeling, S.~Seuret, and B.~Solomyak.
\newblock Dimensions of some fractals defined via the semigroup generated by 2
  and 3.
\newblock {\em Israel J. Math.}, 199:687--709, 2012.

\end{thebibliography}
\bibliographystyle{plain}

\end{document}